\numberwithin{equation}{section}
\title{Hankel vector moment sequences and the non-tangential regularity at infinity of two variable Pick functions
\footnote{MSC 32A70, 46E22}
}
\author{Jim Agler
\thanks{Partially supported by National Science Foundation Grant
DMS 0801259}
%\\ U.C. San Diego\\ La Jolla, CA 92093
\and
John E. M\raise.5ex\hbox{c}Carthy
\thanks{Partially supported by National Science Foundation Grant DMS 0966845}}
\date{January 17, 2012}
\newcommand\blue{\color{black}}
\newcommand\black{\color{black}}
\def\abs#1{\mid\!\! #1 \!\!\mid}
\def\set#1#2{\{ #1 \, | \, #2\}}
\def\H{\mathcal{H}}
\def\alphan{\{\alpha_n\}}
\def\alphanin#1{{\{\alpha_n\}}_{n \in #1}}
\def\deltanin#1{{\{\delta_n\}}_{n \in #1}}
\def\rhonin#1{{\{\rho_n\}}_{n \in #1}}
\def\ctwo{\mathbb{C}^2}
\def\rtwo{\mathbb{R}^2}
\def\zy{z_Y}
\def\zyinv{z_Y^{-1}}
\def\byinv{b_Y^{-1}}
\def\by-half{b_Y^{-\frac{1}{2}}}
\def\byhalf{b_Y^{\frac{1}{2}}}
\def\gb{\gamma_b}
\def\tyinv{{(t-Y)}^{-1}}
\def\h{\mathcal{H}}
\def\m{\mathcal{M}}
\def\rplus{\mathbb{R}^+}
\def\rtwoplus{{(\mathbb{R}^+)}^2}
\def\be{\begin{equation}}
\def\ee{\end{equation}}
\def\dom#1{{\rm Dom}(#1)}
\def\supp#1{{\rm supp}(#1)}
\def\impart#1{{\rm Im}(#1)}
\def\repart#1{{\rm Re}(#1)}
\def\min#1#2{{\rm min}\{#1 #2\}}
\def\max#1#2{{\rm max}\{#1,#2\}}
\def\sp#1{{\rm span}#1}
\def\adj#1{{\rm adj}(#1)}
\def\isubn{I_N}
\def\isub#1{I_{#1}}
\def\norm#1{\| #1 \|}
\def\be{\begin{equation}}
\def\ee{\end{equation}}
\def\abs#1{\mid\!\! #1 \!\!\mid}
\def\set#1#2{\{ #1 \, | \, #2\}}
\def\ltwoof#1{\ell^2(#1)}
\def\pitwo{{\Pi}^2}
\def\C{\mathbb{C}}
\def\D{\mathbb{D}}
\def\h{\mathcal{H}}
\def\m{\mathcal{M}}
\def\M{\mathcal{M}}
\def\p{\mathcal{P}}
\def\ln{\mathcal{L}^N}
\def\lnplus{\mathcal{L}^{N+1}}
\def\lofd{\mathcal{L}(D)}
\def\cpoint#1{C^{#1}\text{-point}}
\def\res{{(A-z_Y)}^{-1}}
\def\ares{A{(A-z_Y)}^{-1}}
\def\resa{{(A-z_Y)}^{-1}A}
\def\={\ = \ }
\def\R{\mathbb R}
\renewcommand{\Im}{{\rm Im}}
\def\la{\lambda}
\def\lj{\lambda_j}
\newcommand{\I}{\mathcal{I}}
\def\ii{{\bf i}}
\def\al{\alpha}
\def\bp{\begin{proof}}
\def\ep{\end{proof}}
\begin{document}

\bibliographystyle{plain}

\maketitle
\begin{abstract}
A Pick function of $d$ variables is a holomorphic map from $\Pi^d$ to $\Pi$, where
$\Pi$ is the upper halfplane. Some Pick functions of one variable have an asymptotic expansion at infinity,
a power series $\sum_{n=1}^\infty \rho_n z^{-n}$ with real numbers $\rho_n$ that gives an asymptotic
expansion on non-tangential approach regions to infinity. H. Hamburger in 1921 characterized which
sequences  $\{ \rho_n\} $ can occur.
We give an extension of Hamburger's results to Pick functions of two variables.
\end{abstract}
\newtheorem{defin}[equation]{Definition}
\newtheorem{lem}[equation]{Lemma}
\newtheorem{prop}[equation]{Proposition}
\newtheorem{thm}[equation]{Theorem}
\newtheorem{claim}[equation]{Claim}
\newtheorem{ques}[equation]{Question}
\newtheorem{example}[equation]{Example}
\newtheorem{cor}[equation]{Corollary}
\newtheorem{rem}[equation]{Remark}

\def\bt{\begin{thm}}
\def\et{\end{thm}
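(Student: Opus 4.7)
The plan is to establish the abstract's claim---a two-variable extension of Hamburger's 1921 characterization of the sequences $\{\rho_n\}$ that occur as non-tangential asymptotic coefficients at infinity of a Pick function. Hamburger's classical argument couples the Nevanlinna representation $f(z) = \int (t-z)^{-1}\,d\mu(t)$ with the moment-problem dictum that $[\rho_{i+j}]$ is a positive-semidefinite Hankel matrix if and only if the $\rho_n$ are moments of a positive measure on $\mathbb{R}$. The proposed two-variable analogue should replace $\mu$ with a two-variable spectral object and scalar Hankel positivity with a positivity condition on a \emph{vector-valued} Hankel matrix, as signalled by the title.

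The first step is to develop a two-variable Nevanlinna/Herglotz-type realization: every Pick function $f \colon \Pi^2 \to \Pi$ of the appropriate class should admit a representation of the form $f(z_1, z_2) = \langle (A - Z(z))^{-1} v, v \rangle$, where $(A_1, A_2)$ is a pair of commuting self-adjoint operators, $Z(z)$ is affine in $(z_1, z_2)$, and $v$ is a distinguished vector. Non-tangential regularity at infinity should force $v$ into the domain of every polynomial in $A_1, A_2$, legitimizing a Neumann expansion of the resolvent in powers of $1/z$ on the approach regions. Because non-tangential cones in $\Pi^2$ preserve the slope variable, the natural expansion coefficients carry directional content: they are vectors indexed by the degrees in $z_1$ and $z_2$ separately---the ``Hankel vector moment sequences'' of the title.

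The necessity direction then follows by direct calculation: a block-Hankel matrix built from these vector moments has entries $\langle p_i(A_1, A_2) v, p_j(A_1, A_2) v \rangle$ for monomials $p_i, p_j$, and is therefore positive semidefinite. The main work lies in sufficiency. Here I would run a GNS/moment-problem reconstruction: start with polynomials in two commuting indeterminates, impose the inner product prescribed by the candidate vector moments, quotient by the null space, complete, and let $A_1, A_2$ act by multiplication. One must then verify that these symmetric operators have \emph{commuting} self-adjoint extensions whose joint spectral measure reproduces the given data, after which inverting the realization recovers a Pick function with the prescribed non-tangential asymptotics.

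The principal obstacle is exactly this commutativity. In one variable, Carleman-type growth bounds on $\{\rho_n\}$ yield essential self-adjointness and the spectral theorem closes the argument. In two variables, commutativity of unbounded symmetric operators is notoriously delicate (cf.\ Nelson's counterexample), so the vector structure on the sequence $\{\rho_n\}$ must carry additional rigidity---a compatibility condition linking the directional components of successive moments---sufficient to force joint diagonalizability of the reconstructed operators. Identifying the correct vector-Hankel positivity hypothesis, and extracting commuting joint spectral data from it, is the heart of the proof; everything else is the standard machinery of realization theory and Neumann expansions adapted from the one-variable case.
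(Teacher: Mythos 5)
Your sketch correctly identifies the realization-theoretic flavor (two-variable Nevanlinna representation, Neumann-type expansion on non-tangential cones, GNS-style reconstruction for sufficiency), but it goes wrong at the structural choice that is the crux of the paper, and the obstacle you flag as ``the heart of the proof'' is one the paper deliberately engineers away.

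You posit a realization $h(z)=\langle (A-Z(z))^{-1}v,v\rangle$ built from a \emph{commuting pair} of self-adjoint operators $(A_1,A_2)$ and then worry---rightly, in that framework---about Nelson-type failure of commuting self-adjoint extensions. But the paper's Nevanlinna representation (Theorem~\ref{thma5}, from \cite{aty11}) is $h(z)=\langle (A-z_Y)^{-1}\alpha,\alpha\rangle$ with a \emph{single} densely defined self-adjoint operator $A$, a bounded positive contraction $Y$, and $z_Y=z_1Y+z_2(I-Y)$. There is no commuting pair to jointly diagonalize: $A$ is one symmetric operator (any self-adjoint extension, possibly on a larger space, works), $Y$ is already bounded and self-adjoint, and they are not required to commute. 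The two-variable structure lives entirely in the pencil $z_Y$ and in the recurrence (\ref{eq2.2}) of the HVMS, $A\alpha_n=Y\alpha_{n+e_1}+(1-Y)\alpha_{n+e_2}$, together with the boundary conditions $Y\alpha_{(0,l)}=0=(1-Y)\alpha_{(l,0)}$ in (\ref{eq2.1}). These relations---not commutativity of a pair---are the ``additional rigidity'' you correctly sense is needed, and they are what the ``Hankel vector moment sequence'' actually encodes (the $\alpha_n$ are honest Hilbert-space vectors, not directional scalar data). Your sufficiency plan, as stated, would stall exactly where you predict, but for a reason that does not exist in the paper's setting.

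The genuine technical content you do not anticipate is twofold. First, the sufficiency direction (Theorem~\ref{thm2.1}) is an inductive construction of the vector moments $\alpha_n$ of weight $|n|=N+1$ from the homogeneity of the scalar moment $r_{2N+1}(b)$, using the substitution $b_1=x$, $b_2=tx/(t-1)$ to reduce to a one-variable resolvent $(t-Y)^{-1}$ and force $Y(1-Y)\beta=0$ for a key vector $\beta$; this is not a straight GNS completion of a polynomial algebra. Second, establishing the non-tangential asymptotic expansion from the representation (Theorem~\ref{prop3.10}) requires the proximity-estimate machinery of Lemmas~\ref{lem3.20}--\ref{lem3.40}, which reduces non-tangential limits to radial limits along rays $s\delta$; a naive ``Neumann expansion of the resolvent'' does not by itself control the error term uniformly over the approach region. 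The matrix-positivity characterization you gesture at does appear, but as the separate Hankel-pair theorem (Theorem~\ref{thm5.1}), with a pair of Gram matrices $(a^1,a^2)$ and a two-variable shift-invariance/kernel condition replacing the one-variable Hankel kernel condition of Theorem~\ref{thma4}.
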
}

%%%%%%%%%%%%%
% Section 1 %
%%%%%%%%%%%%%

\section{Introduction}

A Pick function of one variable is a holomorphic map from the upper half-plane, which we shall denote
by $\Pi$, into $\overline{\Pi}$.
A Pick function of two variables is a holomorphic map from $\pitwo$ to $\overline{\Pi}$.
%Pick functions of two variables were studied in \cite{amy11c} in the context of extending
%L\"owner's theory of  operator monotone functions to two variables. 
The purpose of this note
is to extend to two variables certain well-known results about the asymptotic analysis of
Pick functions in one variable.
%, and the connection of this analysis with the Hamburger moment problem.

\subsection{One variable results}
\label{subsec1.1}

In 1922, R.~Nevanlinna showed that
a Pick class function of one variable 
that decays at infinity is the Cauchy transform of a finite measure on $\R$. 
%This is called the Nevanlinna representation, and was proved by R.~Nevanlinna \cite{nev22}.
\bt
\label{thma2}
\cite{nev22} 
If $F: \Pi \to \Pi$ is analytic and satisfies
\be
\label{eqa111}
\limsup_{y \to \infty} \, | y F(iy)| < \infty ,
\ee
then there exists a unique finite positive Borel measure $\mu$ on $\R$ so that 
\be
\label{eq1.1.2}
F(z) \= 
 \int \frac{d\mu(t)}{t-z} .
\ee
\et

We shall say that a set $S$ in $\Pi$ approaches $\infty$ non-tangentially,
 $S \stackrel{nt}{\to}\infty$, if
$\infty$ is in the closure, and
there is  a constant $c$ such that
$ |z| \le c\ \impart{z}$
for all $z \in S$. If $F$ has a representation as in (\ref{eq1.1.2}), then 
\[
F(z) \ = \ \frac{\rho}{z} + o(1/|z|) 
\]
as $z \stackrel{nt}{\to}\infty$, where $\rho = - \| \mu \|$.
If $\mu$ has more moments, then there is a higher order asymptotic expansion
at $\infty$. 
H. Hamburger proved the following two theorems \cite{ham20, ham21}.
For a proof  of Theorem~\ref{thma3} as stated, see \cite[Thm. 2.2]{shota43}
or \cite[Thm 3.2.1]{akh65}.
 
\bt
\label{thma3}
 Let  real constants $\rho_1, \dots, \rho_{2N-1}$ be given.  There exists a Pick function $F$
satisfying
\be
\label{eqa112}
F(z) \= \frac{\rho_1}{z} + \frac{\rho_2}{z^2} + \dots + \frac{\rho_{2N-1}}{z^{2N-1}}
+ o(|z|^{-(2N-1)})
\ee
as $z \stackrel{nt}{\to}\infty$ if and only if
there is a  measure $\mu$ on $\R$ whose first $2(N-1)$ moments are finite and satisfy
\be
\label{eqa.1.4}
%\gamma_k \ := \
 \int t^k d\mu(t)  \ = \ - \rho_{k+1}, \qquad 0 \leq k \leq 2(N-1).
\ee
Moreover, in this case 
 $F$ has a representation as in (\ref{eq1.1.2}) for some measure $\mu$ satisfying 
(\ref{eqa.1.4}).
\et
Hamburger gave an alternate equivalent condition. 
There is also a proof in \cite[Thm. 1.2]{shota43};
%In \cite{akh65} and \cite{shota43},
%only the case where $H$ is positive definite is discussed; but some elementary linear algebra
%yields the general result.
and see  \cite[Thm. 3.3]{las10} for an alternative formulation (but without a proof).

\bt
\label{thma4}
Let $\rho_1, \dots, \rho_{2N-1}$ be given real numbers. There exists a Pick function
$F$
 satisfying (\ref{eqa112}) if and only if the 
$N$-by-$N$ Hankel matrix
\[
H \ = \
- \
\left(
\begin{array}{cccc}
\rho_1 & \rho_2 & \dots & \rho_{N} \\
\rho_2 & \rho_3 & \dots & \rho_{N+1} \\
\vdots & \vdots && \vdots \\
\rho_{N} & \rho_{N+1} & \dots & \rho_{2N-1}
\end{array}
\right)
\]
is 
%either positive definite,or 
positive semi-definite and has the property 
that whenever $(c_1, c_2, \dots, c_{N-1}, 0)^t$ is in the kernel of $H$,
then  $ (0,c_1, c_2, \dots, c_{N-1})^t$ is also in the kernel.
% of rank $r \leq N $, and the first $r$ columns of $H$ are linearly independent.
\et
%\bt
%\label{thma4}
%Let $\gamma_0, \dots, \gamma_{2N-2}$ be given real numbers. There exists a Borel measure
%$\mu$ on $\R$ satisfying (\ref{eqa.1.4}) for $0 \leq k \leq 2N-2$ if and only if the 
%$N$-by-$N$ Hankel matrix
%\[
%H \ = \
%\left(
%\begin{array}{cccc}
%\gamma_0 & \gamma_1 & \dots & \gamma_{N-1} \\
%\gamma_1 & \gamma_2 & \dots & \gamma_{N} \\
%\vdots & \vdots && \vdots \\
%\gamma_{N-1} & \gamma_{N} & \dots & \gamma_{2(N-1)}
%\end{array}
%\right)
%\]
%is 
%%either positive definite,or 
%positive semi-definite and has the property 
%that whenever $(c_0, c_1, \dots, c_{N-2}, 0)^t$ is in the kernel of $H$,
%then  $ (0,c_0, c_1, \dots, c_{N-2})^t$ is also in the kernel.
%% of rank $r \leq N $, and the first $r$ columns of $H$ are linearly independent.
%\et
In 1881, L. Kronecker proved the following theorem  
\cite{kro81} (see \cite[Thm. I.3.1]{pel02} for a modern treatment).
\bt
\label{thmkro1}
The infinite Hankel form
\[
\left(
\begin{array}{cccc}
\rho_1 & \rho_2 & \rho_3 & \dots \\
 \rho_2 & \rho_3 & \rho_4 &\dots   \\
 \rho_3 & \rho_4 & \rho_5 &\dots   \\
\vdots & \vdots & \vdots & \vdots 
\end{array}
\right)
\]
is finite rank if and only if 
\[
F(z) \ = \ \sum_{n=1}^\infty \frac{\rho_n}{z^n}
\]
is a rational function.
\et

%Kronecker's theorem asserts that a Hankel matrix is finite rank if and only if the entries
%are the residues of a rational function \cite{kro81} (see \cite[Thm. I.3.1]{pel02} for a modern treatment). %One multi-variable generalization was proved by S. C. Power \cite{po82};
%here is another.

\subsection{Two variable results}

A two variable version of Theorem~\ref{thma2} 
was proved  in \cite{aty11}; see also Theorem~\ref{thm2b1} below.
Before stating it, let us introduce some notation.
If $Y$ is an operator on a Hilbert space, and $z = (z_1, z_2)$ is a point in 
${\mathbb C}^2$, we shall use $z_Y$ to denote the operator 
\[
z_Y \ = \ z_1 Y + z_2 (I - Y).
\]

\begin{thm}
\label{thma5}
\cite{aty11}
Let $h : \pitwo \to {\Pi}$ be a Pick function of two variables. Then
\[
\limsup_{s \to \infty} | s F(is,is) | \ < \ \infty 
\]
if and only if there is a Hilbert space $\H$, a self-adjoint densely defined operator $A$ on $\H$,
a positive contraction $Y$ on $\H$, and a vector $\alpha$ in $\H$,
such that
\be
\label{eqa1.2.1}
h(z)\ =\ <{(A-z_Y)}^{-1}\alpha,\alpha>, \ \ \ z \in \pitwo.
\ee
\et
We shall say that $h$ has a {\em type I Nevanlinna representation} if it has a representation
as in (\ref{eqa1.2.1}).

In one variable, the Poisson integral of any finite positive measure on $\R$ is the real part of 
a Pick function that decays like (\ref{eqa111}), so the study of asymptotic expansions (\ref{eqa112})
and solutions to the moment problem (\ref{eqa.1.4}) for arbitrary measures are tightly bound.
In two variables, their study diverges. The infinite Hamburger moment problem in several variables
is studied in \cite{puva99} and \cite{vas02}; for an algorithm for solving the problem in two variables, see \cite{zag10}.  For the truncated problem, see  for example the memoir
 \cite{curfi98} and subsequent papers.
Our objective is to study the two variable analogue of (\ref{eqa112}).

If one restricts $z$ to the diagonal $\{ z_1 = z_2 \}$, then (\ref{eqa1.2.1})
becomes (\ref{eq1.1.2}), where $\mu$ is the scalar spectral measure of $A$ for the vector 
$\alpha$. Saying that an even moment $\gamma_{2k}$ exists in this case is the assertion that
$t^{k-1}$ is in the domain of $A$. We shall generalize this idea to two variables.

We shall let $m$ and $n$  denote ordered pairs of nonnegative integers. 
We set $e_1=(1,0)$ and $e_2=(0,1)$. If $n=(n_1,n_2)$, we set $\abs{n}=n_1+n_2$, and for a pair $z=(z_1,z_2)$ we follow the usual convention of letting $z^n=z_1^{n_1}z_2^{n_2}$. For $N$ a positive integer we set $I_N=\set{n}{1 \le \abs{n} \le N}$.

We now define an object that we shall call a \emph{finite Hankel vector moment sequence}, or for short, a \emph{finite HVMS}. For simplicity, we take $N \geq 2$; see (\ref{defhvms}) for general $N$.
\begin{defin}
\label{defhvms2}
For a fixed positive integer $N \geq 2$, a finite Hankel vector moment sequence is a 3-tuple, $(\alphanin{I_N},Y,A)$ where:
% For some Hilbert space $\mathcal{H}$, 
$\alphanin{I_N}$ is a sequence of vectors in  some Hilbert space $\H$;
% indexed by $I_N$;
$Y$ is a positive contraction acting on $\H$,   satisfying for each $l=1,\ldots,N$
\be\label{eq2.1A}
Y\alpha_{(0,l)}=0 = (1-Y)\alpha_{(l,0)}=0;
\ee
$A$ is a partially defined symmetric operator on $\H$ with the property that
\be\label{eq2.1.5A}
\set{\alpha_n}{1 \le \ \abs{n} \ \le N-1} \subset \dom{A};
\ee
 for each $n \in I_{N-1}$,
\be\label{eq2.2A}
A\alpha_n=Y\alpha_{n+e_1}+(1-Y)\alpha_{n+e_2}.
\ee
\end{defin}

Here is the main result of this paper.
\bt
\label{thmaf}
 A Pick function $h$ of two variables
satisfies 
\be
\label{eqa.2.1}
h(z) \ = \ \sum_{n \in I_{2N-1}} \frac{\rho_n}{z^n} \ + \ o(\|z \|^{-(2N-1)})
\ee
as $z \stackrel{nt}{\to}\infty$, for some real numbers $\rho_n$,
 if and only if it has a representation
as in (\ref{eqa1.2.1}) and there is a finite HVMS 
$(\alphanin{I_N},Y,A)$
with $\alpha = \alpha_{(1,0)} + \alpha_{(0,1)}$.
Moreover,  
$\rho_k$ is given by the  formula:
\[
\rho_k \ = \
- \ \sum \{ \langle \alpha_n , A \alpha_m \rangle \ : \ 
m_1 + n_1 = k_1,\ m_2 + n_2 = k_2,\
m_1 + m_2 = \lfloor  |k|/2 \rfloor \,  \} .
\]
%$\rho_n$ is given by the following formula:
%\[
%\sum_{|k| = M} \frac{\rho_k}{z^k}  \ = \ 
%- \sum_{|m|=\lfloor M/2 \rfloor , |n| = \lceil M/2 \rceil } \frac{1}{z^{m+n}} \langle \alpha_n ,
%A \alpha_m \rangle .
%\]
%\begin{eqnarray*}
%\sum_{|k| = 2l} \frac{\rho_k}{z^k} & \ = \ &
%- \sum_{|m|=l, |n| = l} \frac{1}{z^{m+n}} \langle \alpha_n ,
%Y \alpha_{m+e_1} + (1-Y) \alpha_{m+e_2} \rangle \\
%\sum_{|k| = 2l -1} \frac{\rho_k}{z^k} & \ = \ &
%- \sum_{|m|=l - 1, |n| = l} \frac{1}{z^{m+n}} \langle \alpha_n ,
%Y \alpha_{m+e_1} + (1-Y) \alpha_{m+e_2} \rangle .
%\end{eqnarray*}
\et

When $k = 1$, one interprets the right-hand side of the inner product as $\alpha$
(so $\rho_{(1,0)} = - \langle \alpha_{(1,0)}, \alpha \rangle$ and
$\rho_{(0,1)} = - \langle \alpha_{(0,1)}, \alpha \rangle$ ).
By $z \stackrel{nt}{\to}\infty$ we mean that $\| z \| \to \infty$ while 
 $z$ stays in an approach region
$$
\{z \in \pitwo \ : \  \| z \| \leq c \ \min{\Im z_1,\ \Im z_2} \}$$ for some $c$.
The notation $\lfloor M/2 \rfloor $ stands  for the
greatest integer less than or equal to $M/2$.
% and the smallest integer greater than or equal to $M/2$.

The forward implication of (\ref{thmaf}) is Theorem~\ref{thm4.1}; the converse is
Theorem~\ref{prop3.10}. 
To relate Theorem~\ref{thmaf} to Theorems~\ref{thma3} and \ref{thma4},
think in one variable of 
$\alpha_n$ as $t^{n-1}$ in $L^2(\mu)$, and $A$ as multiplication by $t$ on $L^2(\mu)$.
Then $\rho_k$ is given by a single term, $- \langle t^{\lceil k/2 \rceil -1}, t^{\lfloor k/2 \rfloor} \rangle$.
%The formula for $\rho_n$ is also described in Theorem~\ref{prop3.10}.

Theorem~\ref{thma4} also has a two variable analogue, which we give in Theorem~\ref{thm5.1}.
This justifies our nomenclature of Hankel vector moment sequence.
The last condition in Theorem~\ref{thm5.1} is an analogue of the last condition in Theorem~\ref{thma4};
for an explanation of it, see Section~\ref{sec5}.
\vskip 10pt
{\bf Theorem \ref{thm5.1}}
{\em
Let $a = (a^1,a^2)$ be a pair of matrices on $\isubn$. 
Then there is a finite HVMS $(\alphanin{I_N},Y,A)$
such that
\begin{eqnarray*}
a^1_{mn} & \ = \  &\langle Y \alpha_n, \alpha_m \rangle
\\
a^2_{mn} & \ = \  &\langle (1- Y) \alpha_n, \alpha_m \rangle 
\end{eqnarray*}
 if and only if the following four conditions obtain:
\[
a^1 \text{ and } a^2 \text{ are positive semi-definite.}
\]
\[
a^1_{m+e_1,n}+a^2_{m+e_2,n}=a^1_{m,n+e_1}+a^2_{m,n+e_2} \ \text{ whenever }\ m,n \in \isub{N-1}.
\]
\[
a^1_{(0,l),(0,l)} = a^2_{(l,0),(l,0)}=0\ \text{ for }\ l=1,\ldots,N.
\]
\[
\supp{f} \in \isub{N-1}\text{ and } (a^1+a^2)f=0 \Rightarrow (a^1S_1+a^2S_2)f=0.
\]
}

In Section~\ref{secinf}, we discuss infinite sequences.
One multi-variable generalization of Kronecker's Theorem \ref{thmkro1} was proved by S. C. Power \cite{po82}. In Theorem~\ref{thmf3}, we prove another.

{\bf Theorem \ref{thmf3}:}
{\em
Let $h$ have non-tangential asymptotic expansions of all orders at infinity.
% be given by $h(z) = < (A -z_Y)^{-1} \alpha, \alpha >$
Then there is an infinite HVMS 
$(\{ \alpha_n \},Y,A)$
with $\alpha = \alpha_{(1,0)} + \alpha_{(0,1)}$, 
and $
h(z) = < (A -z_Y)^{-1} \alpha, \alpha >
$.
The sequence can be chosen  with
${\rm rank} \langle \alpha_n, \alpha_m \rangle < \infty $ 
 if and only if $h$ is a rational function.
}

\vskip 10pt

In Section~\ref{secex}, we give an example of a construction of functions in the Pick class that
have asymptotic expansions. In Section~\ref{secm}, we give some technical results on models.

%%%%%%%%%%%%%
% Section 2 %
%%%%%%%%%%%%%

\section {Finite Hankel Vector Moment Sequences}
\label{secb}

%Although our interest is in the case $N \geq 2$, many of our induction arguments
%start with $N =1 $, so we must give a definition of HVMS that is slightly different
%for $N=1$.
\begin{defin}
\label{defhvms}
For a fixed positive integer $N$, a finite Hankel vector moment sequence is a 3-tuple, $(\alphanin{I_N},Y,A)$ where:
% For some Hilbert space $\mathcal{H}$, 
$\alphanin{I_N}$ is a sequence of vectors in  some Hilbert space $\H$;
% indexed by $I_N$;
$Y$ is a positive contraction acting on $\H$,   satisfying for each $l=1,\ldots,N$
\be\label{eq2.1}
Y\alpha_{(0,l)}=0 = (1-Y)\alpha_{(l,0)}=0;
\ee
$A$ is a partially defined symmetric operator on $\H$ with the properties that, if $N \geq 2$
\be\label{eq2.1.5}
\set{\alpha_n}{1 \le \ \abs{n} \ \le N-1} \subset \dom{A};
\ee
%if $N=1$, we require instead that 
%\be\label{eq2.1.6}
%\alpha_{(1,0)} + \alpha_{(0,1)} \ \in \ \dom A.
%\ee
%\ate
%Finally, if $N \geq 2$, then we require that
and, for each $n \in I_{N-1}$,
\be\label{eq2.2}
A\alpha_n=Y\alpha_{n+e_1}+(1-Y)\alpha_{n+e_2}.
\ee
\blue
When $N=1$  conditions (\ref{eq2.1.5}) and (\ref{eq2.2}) are  dropped.
\black
\end{defin}

Every symmetric operator has a self-adjoint extension on a possibly larger Hilbert space; 
so there is no loss in generality in assuming $A$ is self-adjoint. 

If $(\alphanin{I_N},Y,A)$ is a finite HVMS, we frequently shall abuse the notation somewhat and refer to the entire tuple by simply $\alphan$. If $\alphan$ is an HVMS as above, we refer to $N$ as the \emph{size} of $\alphan$, $Y$ as the \emph{Hankel weight} of $\alphan$, $A$ as the \emph{Hankel shift} of $\alphan$, and finally the vectors, $\alpha_n$ are called the \emph{vector moments} of $\alphan$.

Our first proposition gives a simple yet fundamental property of HVMS's. If $z \in \ctwo$ and $Y$ is a positive contraction on a Hilbert space $\mathcal{H}$, we defined 
$z_Y=z_1Y+z_2(1-Y).$ As $Y$ is a positive contraction, the spectral theorem implies that $\zyinv$ is a well defined analytic operator valued function on the set $ \set{z \in \ctwo}{z_2 \ne 0, z_1/z_2\notin (-\infty, 0]}.$ If $\alphan$ is an HVMS with shift $A$ and weight $Y$, and $l$ is a positive integer we shall adopt the notation, $$R_l(z)=\zyinv(A\zyinv)^{l-1}.$$
Note that if $z \in  \set{z \in \ctwo}{z_2 \ne 0, z_1/z_2\notin (-\infty, 0]}$, then the domain of $R_l(z)$ is all of $\h$ if $l=1$ and for $l \ge2$ is inductively defined by
\begin{equation*}
\dom{(R_l(z))}=\set{\alpha \in \h}{{(\zyinv A)}^i \zyinv \alpha \in \dom{A} \ i=0,\ldots ,l-2}.
\end{equation*}
Note also that
\[
R_l(\bar z) \ \subseteq \ R_l(z)^*.
\]
\begin{prop}\label{prop2.1}
Let $\{\alpha_n\}$ be an HVMS of size $N$ and let
\be\label{eq2.3}
\alpha=\alpha_{(1,0)}+\alpha_{(0,1)}.
\ee
If $1 \le l \le N$, then
\be\label{eq2.3.5}
\alpha \in \dom{R_l(z)}
\ee
and
\be\label{eq2.4}
R_l(z)\alpha=\sum_{\mid n\mid=l}\frac{1}{z^n}\alpha_n
\ee
\blue
for all $z$ in $\set{z \in \ctwo}{z_2 \ne 0, z_1/z_2\notin (-\infty, 0]}$.
\black
\end{prop}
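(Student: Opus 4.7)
The plan is to prove both claims simultaneously by induction on $l$. Throughout, fix $z$ in the domain where $z_Y^{-1}$ is defined. I will use $z_Y^{-1}$ commutes with $Y$ (since $z_Y$ is a function of $Y$), and I will repeatedly invoke the three HVMS conditions (\ref{eq2.1}), (\ref{eq2.1.5}) and (\ref{eq2.2}).

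For the base case $l=1$, observe that (\ref{eq2.1}) forces $(1-Y)\alpha_{(1,0)} = 0$ and $Y\alpha_{(0,1)} = 0$, i.e.\ $\alpha_{(1,0)}$ and $\alpha_{(0,1)}$ are eigenvectors of $Y$ with eigenvalues $1$ and $0$. Hence $z_Y \alpha_{(1,0)} = z_1 \alpha_{(1,0)}$ and $z_Y \alpha_{(0,1)} = z_2 \alpha_{(0,1)}$, so applying $z_Y^{-1}$ gives (\ref{eq2.4}) for $l=1$.

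For the inductive step, assume (\ref{eq2.3.5}) and (\ref{eq2.4}) hold at level $l$, with $l \le N-1$. Since every $\alpha_n$ with $|n|=l \le N-1$ lies in $\dom A$ by (\ref{eq2.1.5}), the finite sum $R_l(z)\alpha = \sum_{|n|=l} z^{-n}\alpha_n$ lies in $\dom A$, which promotes $\alpha$ into $\dom(R_{l+1}(z))$ and lets me apply $A$ term-by-term using (\ref{eq2.2}):
\[
A R_l(z)\alpha \= \sum_{|n|=l} \frac{1}{z^n}\bigl[\,Y\alpha_{n+e_1} + (1-Y)\alpha_{n+e_2}\,\bigr].
\]
Then $R_{l+1}(z)\alpha = z_Y^{-1} A R_l(z)\alpha$, so it remains to identify the right hand side with $\sum_{|m|=l+1} z^{-m}\alpha_m$. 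The cleanest way is to verify the equivalent identity obtained by applying $z_Y = z_1 Y + z_2(1-Y)$ to the candidate sum: expanding $z_Y \sum_{|m|=l+1} z^{-m}\alpha_m$ produces two sums, one with $z_1 Y$ and one with $z_2(1-Y)$; condition (\ref{eq2.1}) kills the boundary terms (those with $m_1=0$ in the first sum and $m_2=0$ in the second), and re-indexing by $n = m-e_1$ and $n=m-e_2$ respectively converts what remains into exactly $\sum_{|n|=l} z^{-n}[Y\alpha_{n+e_1}+(1-Y)\alpha_{n+e_2}]$. That matches the display above, closing the induction.

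The argument is essentially linear algebra once the correct identity has been located, so I do not expect a genuine obstacle; the only care required is bookkeeping the shift of indices and the use of (\ref{eq2.1}) to cancel the two boundary terms in the $z_Y$-expansion, which is precisely what makes the shifted sum collapse neatly onto the image of $A$ under (\ref{eq2.2}).
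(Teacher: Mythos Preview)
Your proof is correct and follows essentially the same approach as the paper's. The only cosmetic differences are that you induct on $l$ for a fixed HVMS while the paper inducts on the size $N$ (using that a size-$(N{+}1)$ HVMS restricts to a size-$N$ one), and you verify the key identity by applying $z_Y$ to the target sum, whereas the paper applies $z_Y^{-1}$ directly to the source and splits off the boundary terms; these are the same computation read in opposite directions.
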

\begin{proof}
We induct on $N$. If $N=1$ and $l=1$, then trivially \ref{eq2.3.5} holds. Also, by \ref{eq2.1}, $Y\alpha_{(0,1)}=0 = (1-Y)\alpha_{(1,0)}=0$. Hence,
\begin{align*}
R_1(z)\alpha & =\zyinv \alpha \\&= \zyinv\alpha_{(1,0)}+\zyinv\alpha_{(0,1)} \\& =
\frac{1}{z_1}\alpha_{(1,0)}+\frac{1}{z_2}\alpha_{(1,0)} \\& =\sum_{\mid n\mid=1}\frac{1}{z^n}\alpha_n.
\end{align*}

Now assume that the proposition holds for HVMS's of size $N$. Fix an HVMS, $\alphanin{I_{N+1}}$, of size $N+1$. The case when $l=1$ is handled as in the previous paragraph. If $2\le l\le N+1$, as $\alphanin{I_{N}}$ is an HVMS of size $N$, the inductive hypothesis implies that
\be\label{eq2.4.5}
R_l(z)\alpha=\zyinv A R_{l-1}(z)\alpha=\zyinv A \sum_{\mid n\mid=l-1}\frac{1}{z^n}\alpha_n.
\ee
As $\alphanin{I_{N+1}}$ is of size $N+1$ and $l-1 \le N$, \ref{eq2.1.5} implies that $\alpha_n \in \dom{A}$ whenever $\abs{n} = l-1$. Hence, \ref{eq2.4.5} implies that $\alpha \in \dom{R_l(z)}$. Also, using \ref{eq2.1} and \ref{eq2.2} we see via \ref{eq2.4.5} that
\begin{align*}
R_l(z)\, \alpha &= \zyinv A \sum_{\mid n\mid=l-1}\frac{1}{z^n}\alpha_n
\\
& = \zyinv \sum_{\mid n\mid=l-1}\frac{1}{z^n} Y \alpha_{n+e_1} +
\zyinv \sum_{\mid n\mid=l-1}\frac{1}{z^n} (1-Y) \alpha_{n+e_2}
\\&= \zyinv \frac{1}{z_1^{l-1}} Y \alpha_{(l,0)}
+ \zyinv \sum_{\substack{\mid m\mid=l \\ m \ne (l,0),(0,l)}}\frac{z_1}{z^m} Y \alpha_m +
\\
% \ \ \ \ 
& \qquad
\zyinv \sum_{\substack{\mid m\mid=l \\m \ne(l,0),  (0,l)}}\frac{z_2}{z^m} (1-Y) \alpha_m
+ \zyinv \frac{1}{z_2^{l-1}} (1-Y) \alpha_{(0,l)}
\\& =\frac{1}{z_1^l}\alpha_{(l,0)}
+\zyinv \sum_{\substack{\mid m\mid=l \\ m \ne (l,0),(0,l)}}\frac{z_1}{z^m} Y \alpha_m +
\\& \qquad 
\zyinv \sum_{\substack{\mid m\mid=l \\m \ne (l,0)(0,l)}}\frac{z_2}{z^m} (1-Y) \alpha_m +\frac{1}{z_2^l}\alpha_{(0,l)}
\\& = \frac{1}{z_1^l}\alpha_{(l,0)}+
\zyinv \sum_{\substack{\mid m\mid=l \\m \ne (l,0)(0,l)}}(z_1 Y + z_2 (1-Y)) \frac{1}{z^m}\alpha_m +\frac{1}{z_2^l}\alpha_{(0,l)}
\\& = \frac{1}{z_1^l}\alpha_{(l,0)}+
\sum_{\substack{\mid m\mid=l \\m \ne (l,0)(0,l)}} \frac{1}{z^m}\alpha_m +\frac{1}{z_2^l}\alpha_{(0,l)}
\\& =\sum_{\mid n\mid=l}\frac{1}{z^n}\alpha_n.
\end{align*}
\end{proof}

The property described by \ref{eq2.3.5} in Proposition \ref{prop2.1} arises as an issue in many of the applications of HVMS's that we have in mind. Accordingly, we introduce the following definition.
\begin{defin}\label{def2.1}
Let $\h$ be a Hilbert space, $\alpha \in \h$, and assume that $Y$ is a positive contraction on $\h$. If $A$ is a symmetric operator on $H$, we say that $A$ has finite complex vector $(Y,\alpha)$-moments to order $N$ if for each $z \in
\blue
 \set{z}{z_2 \ne 0, z_1/z_2\notin (-\infty, 0]}
\black
$, $\alpha \in \dom{(Az_Y^{-1})^l}$ for $l=1,\ldots,N$. We say that $A$ has finite real vector $(Y,\alpha)$-moments to order $N$ if for each $b \in {\rplus}^2$, $\alpha \in \dom{(Ab_Y^{-1})^l}$ for $l=1,\ldots,N$.
\end{defin}

The following converse to Proposition \ref{prop2.1} provides a useful criterion to verify that a given symmetric operator and positive operator are associated with an HVMS.
\begin{prop}\label{prop2.2}
Let $\h$ be a Hilbert space, let $\alpha \in \h$ and assume that $A$ and $Y$ are operators acting on $\h$, with $A$ symmetric and $Y$ a positive contraction. The following conditions are equivalent.\\ \\
(i)\ \ \ There exists a sequence $\alphanin{I_N}$ in $\h$ such that \\
\hspace*{1mm}$ \quad \quad \; \alpha=\alpha_{(1,0)}+\alpha_{(0,1)}$ and $(\alphan,A,Y)$ is an HVMS.\\ \\
(ii)\ \ \ $A$ has finite complex vector $(Y,\alpha)$-moments to order $N-1$ and \\
\hspace*{10mm} for each $l=1,\ldots,N$ there exist vectors $\alpha_n$, $\abs{n}=l$ such that
\be
R_l(z)\alpha=\sum_{\mid n\mid=l}\frac{1}{z^n}\alpha_n \notag
\ee
\ \ \ \ whenever $z \in \blue \set{z}{z_2 \ne 0, z_1/z_2\notin (-\infty, 0]}$.
\black
\\\\
(iii)\ \ \ $A$ has finite real vector $(Y,\alpha)$-moments to order $N-1$ and\\
\hspace*{10mm} for each $l=1,\ldots,N$ there exist vectors $\alpha_n$, $\abs{n}=l$ such that
\be\label{eq2.4.6}
R_l(b)\alpha=\sum_{\mid n\mid=l}\frac{1}{b^n}\alpha_n
\ee
\ \ \ \ whenever $b \in {\rplus}^2$.
\end{prop}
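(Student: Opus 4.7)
The plan is to establish the cyclic implications $(\text{i}) \Rightarrow (\text{ii}) \Rightarrow (\text{iii}) \Rightarrow (\text{i})$. The first two are essentially free. For $(\text{i}) \Rightarrow (\text{ii})$, Proposition \ref{prop2.1} applies verbatim: an HVMS of size $N$ yields $\alpha \in \dom R_l(z) = \dom (Az_Y^{-1})^{l-1}$ for $1 \le l \le N$ (hence complex moments to order $N-1$) together with the asserted Laurent identity for $R_l(z)\alpha$. For $(\text{ii}) \Rightarrow (\text{iii})$, observe that ${(\mathbb{R}^+)}^2 \subset \{z : z_2 \ne 0,\ z_1/z_2 \notin (-\infty,0]\}$, so (iii) is just the restriction of (ii).

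The content lies in $(\text{iii}) \Rightarrow (\text{i})$, and the strategy is to extract every defining relation of an HVMS by comparing coefficients of Laurent monomials in $b$ on ${(\mathbb{R}^+)}^2$. To get started, apply $b_Y$ to the $l=1$ identity $b_Y^{-1}\alpha = \frac{1}{b_1}\alpha_{(1,0)} + \frac{1}{b_2}\alpha_{(0,1)}$ and expand:
\[
\alpha \= Y\alpha_{(1,0)} + (1-Y)\alpha_{(0,1)} + \tfrac{b_1}{b_2}\, Y\alpha_{(0,1)} + \tfrac{b_2}{b_1}\,(1-Y)\alpha_{(1,0)}.
\]
Since the functions $1,\ b_1/b_2,\ b_2/b_1$ are linearly independent on ${(\mathbb{R}^+)}^2$, the two off-diagonal coefficients must vanish, giving $Y\alpha_{(0,1)}=0$ and $(1-Y)\alpha_{(1,0)}=0$; then the constant term reads $\alpha = \alpha_{(1,0)} + \alpha_{(0,1)}$.

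Next, I would establish that $\alpha_n \in \dom A$ for $1 \le |n| \le N-1$. By hypothesis, $\alpha \in \dom (Ab_Y^{-1})^{N-1}$, so for each $l \le N-1$ the vector $R_l(b)\alpha = \sum_{|n|=l} b^{-n}\alpha_n$ lies in $\dom A$ for every $b \in {(\mathbb{R}^+)}^2$. The $l+1$ Laurent functions $b \mapsto b^{-n}$, $|n|=l$, are linearly independent on ${(\mathbb{R}^+)}^2$ (Vandermonde), so by specializing $b$ at $l+1$ values and inverting, each $\alpha_n$ is a linear combination of vectors in $\dom A$; since $\dom A$ is a subspace, $\alpha_n \in \dom A$.

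Finally, for each $l = 2, \dots, N$, the relation $R_l(b) = b_Y^{-1} A R_{l-1}(b)$ combined with (iii) gives
\[
A\sum_{|n|=l-1}\tfrac{1}{b^n}\alpha_n \= b_Y\sum_{|m|=l}\tfrac{1}{b^m}\alpha_m.
\]
Using $\alpha_n \in \dom A$ I pull $A$ inside on the left, and on the right I split $b_Y = b_1 Y + b_2(1-Y)$, reindex $m = n + e_1$ (resp.\ $n+e_2$) for $m_1 \ge 1$ (resp.\ $m_2 \ge 1$), and isolate the two corner terms $m=(0,l),(l,0)$, which produce the off-degree monomials $\frac{b_1}{b_2^l}Y\alpha_{(0,l)}$ and $\frac{b_2}{b_1^l}(1-Y)\alpha_{(l,0)}$. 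Matching the coefficients of the monomials $b^{-n}$ with $|n|=l-1$ yields $A\alpha_n = Y\alpha_{n+e_1} + (1-Y)\alpha_{n+e_2}$, and matching the two corner monomials (which are linearly independent of the $b^{-n}$) gives $Y\alpha_{(0,l)} = 0$ and $(1-Y)\alpha_{(l,0)}=0$. Running $l$ from $2$ to $N$ supplies all remaining HVMS relations.

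The only non-routine point is the coefficient-matching, which hinges on the linear independence of $\{b^{-n} : |n| = l-1\} \cup \{b_1/b_2^l,\ b_2/b_1^l\}$ on ${(\mathbb{R}^+)}^2$ — a standard Vandermonde argument but one that has to be invoked carefully because hypothesis (iii), unlike (ii), is restricted to real positive $b$, so the comparison of coefficients is not simply uniqueness of a holomorphic Laurent expansion.
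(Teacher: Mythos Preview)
Your proof is correct and follows essentially the same approach as the paper: the cycle $(\text{i})\Rightarrow(\text{ii})\Rightarrow(\text{iii})$ via Proposition~\ref{prop2.1} and restriction, then $(\text{iii})\Rightarrow(\text{i})$ by applying $b_Y$ to the $l=1$ identity to recover $\alpha=\alpha_{(1,0)}+\alpha_{(0,1)}$ and \eqref{eq2.1} at level $1$, using span/Vandermonde to place each $\alpha_n$ with $|n|\le N-1$ in $\dom A$, and finally comparing coefficients in $A R_{l-1}(b)\alpha = b_Y R_l(b)\alpha$ to extract \eqref{eq2.2} and the remaining instances of \eqref{eq2.1}. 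Your explicit emphasis on the linear independence of the relevant Laurent monomials on $(\mathbb{R}^+)^2$ is exactly the mechanism the paper uses implicitly when it says ``equating terms.''
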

\begin{proof}
That (i) implies (ii) follows from \ref{prop2.1}. Obviously, (ii) implies (iii).

Assume that (iii) holds. 
\blue
To show that \ref{eq2.1} holds when $l=1$ and that $\alpha=\alpha_{(1,0)}+\alpha_{(0,1)}$,
 equate coefficients in the following equation obtained from \ref{eq2.4.6} when $l=1$.
 \begin{align*}
\alpha &=\ b_Y \byinv \alpha \\
&=\ b_Y R_1(b) \alpha\\
&=\ (b_1Y+b_2(1-Y))(\frac{1}{b_1}\alpha_{(1,0)}+\frac{1}{b_2}\alpha_{(0,1)}).
\end{align*}
Now assume $N \geq 2$.
\black
Note that the moment condition implies that for $1 \le l\le N-1$, $R_l(b)\alpha \in \dom{A}$. Hence by \ref{eq2.4.6},
$$\sum_{\mid n\mid=l}\frac{1}{b^n}\alpha_n \in \dom{A},$$
for all $b \in {\rplus}^2$. As
$$\sp{\set{\sum_{\mid n\mid=l}\frac{1}{b^n}\alpha_n}{b \in {\rplus}^2}}=
\sp{\set{\alpha_n}{\abs{n}=l}},$$
it follows that \ref{eq2.1.5} holds.

Now fix $l$ with $1 \le l \le N-1$. Noting that $b_Y R_{l+1} (b) = A R_l (b)$, we compute using \ref{eq2.4.6} that
\begin{align*}
\sum_{\mid m\mid=l}\frac{1}{b^m}A\alpha_m &= A\sum_{\mid m\mid=l}\frac{1}{b^m}\alpha_m \\&=b_Y \sum_{\mid n\mid=l+1}\frac{1}{b^n}\alpha_n
\\&=\frac{b_1}{b_2^{l+1}}Y\alpha_{(0,l+1)}+\sum_{\mid m\mid=l}\frac{1}{b^m}(Y\alpha_{m+e_1}+(1-Y)\alpha_{m+e_2})
\\& \ \ + 
%\frac{b_1}{b_2^{l+1}}Y\alpha_{(0,l+1)}.
\frac{b_2}{b_1^{l+1}} (1-Y) \alpha_{(l+1,0)}.
\end{align*}
Equating terms in this formula yields that \ref{eq2.2} holds for $2 \le l \le N-1$ and that \ref{eq2.1} for $2 \le l \le N$.

%There remains to show that \ref{eq2.1} holds when $l=1$ and that $\alpha=\alpha_{(1,0)}+\alpha_{(0,1)}$. Both of these facts follow immediately by equating coefficients in the following equation obtained from \ref{eq2.4.6} when $l=1$.
% \begin{align*}
%\alpha &=\ b_Y \byinv \alpha \\
%&=\ b_Y R_1(b) \alpha\\
%&=\ (b_1Y+b_2(1-Y))(\frac{1}{b_1}\alpha_{(1,0)}+\frac{1}{b_2}\alpha_{(0,1)}).
%\end{align*}
\end{proof}
We now turn to a much more subtle characterization of HVMS's given in Theorem \ref{thm2.1} below. Suppose that $\alphan$ is an HVMS of size $N$ with weight $Y$ and shift $A$ and let $\alpha$ be as in \ref{eq2.3}. Let $\rplus=\set{t \in \mathbb{R}}{t>0}$. For $1 \le k \le 2N-1$ define functions $r_k:{\rplus}^2 \to \mathbb{R}$ by the formulas,
\begin{align}
\label{eq2.4.9}
r_1(b)\  &=\ < R_1(b) \alpha, \alpha > \ = \ < b_Y^{-1} \alpha, \alpha > &\text{ if } k = 1 \\ 
\label{eq2.5}
r_k(b)\ &=\ <R_l(b)\alpha,A R_{l-1}(b)\alpha> &\text{ if } 3 \leq  k=2l-1
\\
\label{eq2.6}
r_k(b)\ &=\ <R_l(b)\alpha,A R_l(b)\alpha> &\text{ if }2 \leq  k=2l,
\end{align}
where the expressions $R_l(b)\alpha$ make sense by Proposition \ref{prop2.1}.
Computing $r_k(b)$ using \ref{eq2.4} yields the qualitative information that for each $k$ with $1 \le k \le 2N-1$, $r_k(b)$ is a homogenous polynomial in $\frac{1}{b} =(\frac{1}{b_1},\frac{1}{b_2})$ of degree $k$. To formalize these properties of $\alpha$, $Y$, and $A$ we introduce the following definition.
\begin{defin}\label{def2.2}
Let $\h$ be a Hilbert space, $\alpha \in \h$, and assume that $Y$ is a positive contraction on $\h$. Assume that $A$ is a symmetric operator on $H$ with finite real vector $(Y,\alpha)$-moments to order $N-1$. For $1 \le k \le 2n-1$ we define the $k^{\text{th}}$ scalar $(Y,\alpha)$-moment of $A$ by equations (\ref{eq2.4.9}) to (\ref{eq2.6}).
\end{defin}
Before continuing, we remark that ontologically the scalar $(Y,\alpha)$-moments of $A$ are functions on $\rtwoplus.$ However, if these functions happen to be given by homogenous polynomials (as e.g. occurs in the case of an HVMS), then there is an obvious way to extend the moment functions to all of $\ctwo.$ Concrete formulas for this case would be
\begin{align}
\label{eq2.5.01}
r_1(z)\  &=\ < R_1(z) \alpha, \alpha > \ = \ < z_Y^{-1} \alpha, \alpha > &\text{ if } k = 1 \\ 
\label{eq2.5.1}
r_k(z)\ &=\ <R_l(z)\alpha,A R_{l-1}(z)^*\alpha> &\text{ if } 3 \leq  k=2l-1
\\
\label{eq2.5.2}
r_k(z)\ &=\ <R_l(z)\alpha,A R_l(z)^*\alpha> &\text{ if } 2 \leq k=2l .
\end{align}
%\be\label{eq2.5.1}
%r_k(z)=<R_l(z)\alpha,A R_{l-1}(z)^*\alpha> \text{   if   } k=2l-1
%\ee
%and
%\be\label{eq2.5.2}
%r_k(z)=<R_l(z)\alpha,A R_l(z)^*\alpha> \text{   if   } k=2l.
%\ee
\begin{rem}
\label{rem2.1}
{\rm
If $(\alphan,A,Y)$ is a finite HVMS, then by Proposition~\ref{prop2.2} the $k^{\rm th}$ 
scalar $(Y,\alpha)$-moments of $A$
are given by
\begin{eqnarray*}
r_1(b) & =  \frac{1}{b_1} \langle \alpha_{(1,0)} , \alpha \rangle 
+ \frac{1}{b_2} \langle \alpha_{(0,1)} , \alpha \rangle &\text{ if } k = 1 \\ 
%\sum_{|k| = 2l -1} \frac{\rho_k}{b^k}
 r_k(b) & =  
 \sum_{|m|=l - 1, |n| = l} \frac{1}{b^{m+n}} \langle \alpha_n ,
Y \alpha_{m+e_1} + (1-Y) \alpha_{m+e_2} \rangle  &\text{ if } 3 \leq  k=2l-1
\\
%\sum_{|k| = 2l} \frac{\rho_k}{b^k} 
r_k(b) & = 
 \sum_{|m|=l, |n| = l} \frac{1}{b^{m+n}} \langle \alpha_n ,
Y \alpha_{m+e_1} + (1-Y) \alpha_{m+e_2} \rangle &\text{ if } 2 \leq k=2l   .
\end{eqnarray*}
In particular, they only depend on the Gram matrices $ a^1 = \langle Y \alpha_n, \alpha_m \rangle$
and  $ a^2 = \langle (1- Y) \alpha_n, \alpha_m \rangle$.
}
\end{rem}

\begin{thm}\label{thm2.1}
Let $\h$ be a Hilbert space, $\alpha \in \h$, and $N \ge 1$. Assume that $Y$ is a positive contraction on $\h$ and $A$ is a symmetric operator on $H$. There exists an indexed sequence $\alphanin{I_N}$ of vectors in $\h$ such that $(\alphan,A,Y)$ is an HVMS of size $N$ and
\be\label{eq2.6.5}
\alpha=\alpha_{(1,0)}+\alpha_{(1,0)}
\ee
if and only if $A$ has finite real vector $(Y,\alpha)$-moments to order $N-1$ and for each $k \le 2N-1$, the $k^{th}$ scalar $(Y,\alpha)$-moment of $A$ is a homogeneous polynomial in $\frac{1}{b}$ of order $k$.
\end{thm}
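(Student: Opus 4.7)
The forward direction is essentially Remark~\ref{rem2.1}: if $(\alphan, A, Y)$ is an HVMS with $\alpha = \alpha_{(1,0)} + \alpha_{(0,1)}$, then Proposition~\ref{prop2.1} gives $R_l(b)\alpha = \sum_{\abs{n}=l}\alpha_n/b^n$ for $1 \le l \le N$. This verifies the finite real vector $(Y,\alpha)$-moment condition to order $N-1$, and substituting the formula into the definitions \eqref{eq2.4.9}--\eqref{eq2.6} exhibits each $r_k$ as a homogeneous polynomial of degree $k$ in $(1/b_1, 1/b_2)$.

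For the converse, the plan is to apply Proposition~\ref{prop2.2}(iii): it suffices to produce, for each $1 \le l \le N$, vectors $\{\alpha_n : \abs{n}=l\}$ with $R_l(b)\alpha = \sum_{\abs{n}=l}\alpha_n/b^n$ for all $b \in \rtwoplus$. I would proceed by induction on $l$. For $l=1$, write $Y = \int_0^1 t\, dE(t)$ and let $\mu = \langle E(\cdot)\alpha, \alpha\rangle$; the hypothesis $r_1(b) = a_1/b_1 + a_2/b_2$ becomes
\[
\int_0^1 \frac{d\mu(t)}{b_1 t + b_2(1-t)} = \frac{a_1}{b_1} + \frac{a_2}{b_2}.
\]
Setting $b_2 = 1$ and examining the limits $b_1 \to 0^+$ and $b_1 \to \infty$ will force $\mu(\{1\}) = a_1$ and $\mu(\{0\}) = a_2$, so that $\int_{(0,1)}(b_1 t + b_2(1-t))^{-1}d\mu(t) \equiv 0$; strict positivity of the integrand on $(0,1)$ then gives $\mu|_{(0,1)} = 0$. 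Setting $\alpha_{(1,0)} = P_{\ker(1-Y)}\alpha$ and $\alpha_{(0,1)} = P_{\ker Y}\alpha$ produces the level-$1$ decomposition.

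For the inductive step, suppose the decomposition at level $l$ has been established. Set $\xi_j = A\alpha_{(j, l-j)}$, which is defined since each $\alpha_n$ with $\abs{n} \le N-1$ lies in the linear span of values $R_l(b)\alpha \subset \dom{A}$. From $R_{l+1}(b)\alpha = \byinv\sum_j \xi_j/(b_1^j b_2^{l-j})$, matching powers of $b$ reduces the construction of $\{\alpha_m : \abs{m}=l+1\}$ to finding $\beta_0, \ldots, \beta_{l+1} \in \h$ with $Y\beta_0 = 0$, $(1-Y)\beta_{l+1} = 0$, and $(1-Y)\beta_j + Y\beta_{j+1} = \xi_j$ for $0 \le j \le l$. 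Decomposing $\h$ via the spectral projections of $Y$ at $\{0\}$, $(0,1)$, $\{1\}$, the $\beta_j$'s are immediately determined on $\ker Y$ and $\ker(1-Y)$ by the algebraic equations, while on $E((0,1))\h$ the recursion with initial datum $\beta_0 = 0$ gives a unique formal solution, leaving the condition $(1-Y)\beta_{l+1} = 0$ as a single terminal compatibility constraint at each spectral value $t \in (0,1)$.

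The main obstacle is to deduce this compatibility constraint from the hypothesis that $r_{2l+1}$ is a homogeneous polynomial of degree $2l+1$ in $1/b$. Using $r_{2l+1}(b) = \|\byhalf R_{l+1}(b)\alpha\|^2$ and spectral expansion, one has
\[
r_{2l+1}(b) = \sum_{p=0}^{2l} \frac{1}{b_1^p\, b_2^{2l-p}} \int_0^1 \frac{d\nu_p(t)}{b_1 t + b_2(1-t)}, \qquad \nu_p = \sum_{j+k=p}\langle E(\cdot)\xi_j, \xi_k\rangle.
\]
Setting $u = b_1/b_2$, each integral $\int (ut+1-t)^{-1} d\nu_p(t)$ is singular at $u = -(1-t)/t$ for every $t \in \supp{\nu_p} \cap (0,1)$; the polynomial hypothesis forces the singularities to cancel across $p$. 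A residue calculation at each atomic spectral value $t_0 \in (0,1)$ will identify the cancellation with $\|E(\{t_0\})\sum_j (-s_0)^j \xi_j\|^2 = 0$ (where $s_0 = t_0/(1-t_0)$), which is precisely the compatibility needed to close the recursion. The continuous part of the spectrum should admit a parallel treatment via an analytic-continuation/cut argument, and the finite moment hypothesis will supply the integrability required to lift the spectral-fiber solution to bona fide vectors $\beta_j \in \h$.
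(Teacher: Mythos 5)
Your approach is genuinely different from the paper's, and the core idea — identify the level-$(l+1)$ construction with a two-term recursion on spectral fibers and derive the terminal compatibility from the homogeneity of $r_{2l+1}$ — is sound. Indeed, your compatibility condition $\sum_j(-s(t))^j\xi_j(t)=0$ on $(0,1)$ is exactly the paper's condition $Y(1-Y)\beta=0$ (with $\beta=\sum_{|m|=N}Y^{m_1}(Y-1)^{m_2}A\alpha_m$) written in the spectral picture, and your residue computation at an atom $t_0\in(0,1)$ correctly produces $\|E(\{t_0\})\sum_j(-s_0)^j\xi_j\|^2=0$. However, there are two real gaps. First, the continuous-spectrum case is not merely a ``parallel treatment'': unlike the paper's argument, which isolates a \emph{single positive} spectral measure $dE_{\beta,\beta}$ whose Stieltjes transform in the variable $t$ is forced to be rational with poles only at $\{0,1\}$ (a clean one-variable fact), your expansion produces a finite family of signed/complex measures $\nu_p$ whose weighted Cauchy transforms must cancel across a branch cut. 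Turning that cancellation into a vector-valued vanishing statement requires a Plemelj--Stieltjes jump argument applied to the positive-semidefinite matrix measure $d\langle E(\cdot)\xi_j,\xi_k\rangle$, which is substantially more delicate than the atomic residue count and is not supplied by your sketch. Second, your justification for why the fiberwise solution yields vectors $\beta_j\in\h$ is wrong: the finite-moment hypothesis does not control the $1/t^{j-i}$ singularities in the forward recursion. What actually saves you is the compatibility condition itself — it makes the forward solution (singular only at $t=0$) agree with the backward solution (singular only at $t=1$), so $\|\beta_j(t)\|\lesssim\sum_i\|\xi_i(t)\|$ uniformly on $(0,1)$ — but you should say so. The paper's substitution $b_1=x,\ b_2=\tfrac{t}{t-1}x$ together with the operator polynomials $Q_m(t)=\frac{t^m-Y^m}{t-Y}$ sidesteps both difficulties at once: the residual term is a single positive Stieltjes transform, and the resulting identity $t(t-1)(t-Y)^{-1}\sum t^m A\alpha_m=t(t-1)\sum Q_m(t)A\alpha_m+(t+Y-1)\beta$ is manifestly a vector-valued polynomial, so the new $\alpha_n$'s are automatically in $\h$. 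Your route can likely be completed, but as written it trades the paper's algebraic decomposition for harder analysis and leaves the hardest step to a hand-wave.
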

\begin{proof}
The necessity of the homogeneity condition follows by the discussion leading up to Definition \ref{def2.2}. To prove the sufficiency we proceed by induction on $N$.

When $N=1$, there is only one scalar moment given by $$r_1(b)=<\byinv\alpha,\alpha>.$$ If $r_1$ is homogenous of degree one, then there exist constants $a_1$ and $a_2$ such that
\be\label{eq2.7}
<\byinv\alpha,\alpha>=a_1\frac{1}{b_1}+a_2\frac{1}{b_2}.
\ee
We analyze \ref{eq2.7} by making the substitutions,
\be\label{eq2.8}
b_1=x \ \ \ \ \ \text{and} \ \ \ \ \ b_2=\frac{t}{t-1}x.
\ee
Noting that in the new variables $x$ and $t$,
\be\label{eq2.9}
\byinv=\frac{t-1}{x}\tyinv,
\ee
one computes that \ref{eq2.7} becomes
\be\label{eq2.10}
<\tyinv \alpha,\alpha>=\frac{a_1}{t-1}+\frac{a_2}{t}.
\ee
Now, \ref{eq2.10} implies that the scalar spectral measure of $Y$ w.r.t. $\alpha$ is supported in the set $\{0,1\}$, which in turn implies that $Y(1-Y)\alpha=0$. Letting $\alpha_{(1,0)}=Y\alpha$ and $\alpha_{(0,1)}=(1-Y)\alpha$, we see immediately that \ref{eq2.6.5} holds. As
$$(1-Y)\alpha_{(1,0)}=(1-Y)Y\alpha=0$$
and
$$Y\alpha_{(1,0)}=Y(1-Y)\alpha=0,$$
we see that \ref{eq2.1} holds. Finally, as  \ref{eq2.1.5} and \ref{eq2.2} are both vacuous when $N=1$, the theorem is proved for the special case when $N=1$

Now suppose the sufficiency of the homogeneity conditions whenever $A$ has finite vector $(Y,\alpha)$-moments to order $N-1$ and $2N-1$ homogenous scalar $(Y,\alpha)$-moments. Fix $A,Y$, and $\alpha$ with the properties that $A$ has finite real vector $(Y,\alpha)$-moments to order $N$ and $2N+1$ homogenous real scalar $(Y,\alpha)$-moments, $r_k(b)$. We need to show that there exists an indexed sequence $\alphanin{I_{N+1}}$ in $\h$ such that $(\alphan,A,Y)$ is an HVMS of size $N+1$ and such that $\alpha=\alpha_{(1,0)}+\alpha_{(1,0)}$. By Proposition \ref{prop2.2} this will be accomplished if we can construct an indexed set $\alphanin{I_{N+1}}$ in $\h$ such that
\be\label{eq2.10.1}
\alphanin{I_{N}} \subset \dom{A}
\ee
and
\be\label{eq2.10.2}
R_l(b)\alpha=\sum_{\mid n\mid=l}\frac{1}{b^n}\alpha_n \text{ for } l=1,\ldots,N+1.
\ee

By the induction hypothesis, there exists an indexed set of vectors in $\h$, $\alphanin{I_N}$ such that $\ref{eq2.6.5}$ holds and such that $(\alphanin{I_N},A,Y)$ is an HVMS of size $N$.
By the homogeneity of $r_{2N+1}(b)$, there exist scalars $\rho_n, \abs{n}=2N+1,$ such that
\be\label{eq2.11}
 r_{2N+1}(b)=\sum_{\mid n\mid=2N+1}\frac{\rho_n}{b^n}
\ee
On the other hand, by the definition of the odd scalar moments, \ref{eq2.5},
\be\label{eq2.12}
 r_{2N+1}(b)=<\byinv A R_N(b)\alpha,A R_{N}(b)\alpha>.
\ee
Finally, Proposition \ref{prop2.1} implies that
\be\label{eq2.13}
R_N(b)\alpha=\sum_{\mid n\mid=N}\frac{1}{b^n}\alpha_n.
\ee
The remainder of the proof consists of employing the substitutions, \ref{eq2.8}, to make various deductions from \ref{eq2.11}, \ref{eq2.12}, and \ref{eq2.13} pertinent to establishing \ref{eq2.10.1} and \ref{eq2.10.2}. To facilitate our calculations we shall employ the notation,
$$t^n=t^{n_1}{(t-1)}^{n_2}.$$

Making the substitutions, \ref{eq2.8}, in \ref{eq2.13}, we obtain that
\be\label{eq2.14}
R_N(b)\alpha={(tx)}^{-N}\sum_{\mid n\mid=N}t^n \alpha_n.
\ee
As $A$ is assumed to have finite real vector $(Y,\alpha)$-moments to order $N$, the left side of \ref{eq2.14} is in the domain of $A$ for all $b \in \rtwoplus$. Hence, the right side of \ref{eq2.14} is in the domain of $A$ for all $t \in \rplus$. Noting that the set $\set{t^n}{\abs{n} = N}$ is a basis for the polynomials of degree less than or equal to $N$, it follows that $\alpha_n \in \dom{A}$ whenever $\abs{n} = N$. On the other hand, as $(\alphanin{I_N},A,Y)$ is an HVMS of size $N$, it follows from \ref{eq2.1.5} that $\alpha_n \in \dom{A}$ whenever $\abs{n} < N$. Thus, we have shown that
\ref{eq2.10.1} holds.

In order to verify \ref{eq2.10.2} we must first explain how $\alpha_n$ is defined when $\abs{n}=N+1$. Substitute \ref{eq2.13} into \ref{eq2.12} and then equate the right hand sides of \ref{eq2.11} and \ref{eq2.12} to obtain,
\begin{align*}
\lefteqn{{(tx)}^{-(2N+1)}\sum_{\mid n\mid=2N+1}\rho_n t^n} \\
 &\quad = \frac{t-1}{x}<{(t-Y)}^{-1}{(tx)}^{-N}\sum_{\mid m\mid=N}t^m A\alpha_m,{(tx)}^{-N}\sum_{\mid m\mid=N}t^m A\alpha_m> 
\end{align*}
which simplifies to
\be\label{eq2.16}
\frac{p(t)}{t(t-1)}=<{(t-Y)}^{-1}\sum_{\mid m\mid=N}t^m A\alpha_m,\sum_{\mid m\mid=N}t^m A\alpha_m>,
\ee
where $p$ is the polynomial of degree less than or equal to $2N+1$ defined by
\be\label{eq2.17}
p(t)=\sum_{\mid n\mid=2N+1}\rho_n t^n.
\ee
For $m$ a multi-index, we define $Q_m(t)$, an operator-valued polynomial of degree $\abs{m}-1$, by the formula,
\be\label{eq2.18}
Q_m(t)=\frac{t^m - Y^m}{t-Y}. \notag
\ee
Computing with the right side of \ref{eq2.16} yields that
\begin{align*}
\lefteqn{ <{(t-Y)}^{-1}\sum_{\mid m\mid=N}t^m A\alpha_m,\sum_{\mid m\mid=N}t^m A\alpha_m>} \\
&\ =\ <{(t-Y)}^{-1}\sum_{\mid m\mid=N}[(t-Y)Q_m(t)+Y^m] A\alpha_m,\sum_{\mid m\mid=N}t^m A\alpha_m>\\
&\ =\ <\sum_{\mid m\mid=N}Q_m(t) A\alpha_m,\sum_{\mid m\mid=N}t^m A\alpha_m>\\
&\qquad\qquad  + <{(t-Y)}^{-1}\sum_{\mid m\mid=N}Y^m A\alpha_m,\sum_{\mid m\mid=N}t^m A\alpha_m>\\
&\ =\ <\sum_{\mid m\mid=N}Q_m(t) A\alpha_m,\sum_{\mid m\mid=N}t^m A\alpha_m>\\
&\qquad\qquad + <\sum_{\mid m\mid=N}Y^m A\alpha_m,{(t-Y)}^{-1}\sum_{\mid m\mid=N}t^m A\alpha_m>\\
&\ =\ <\sum_{\mid m\mid=N}Q_m(t) A\alpha_m,\sum_{\mid m\mid=N}t^m A\alpha_m> + <\sum_{\mid m\mid=N}Y^m A\alpha_m,\sum_{\mid m\mid=N} Q_m(t) A\alpha_m>\\
&\qquad\qquad +<\sum_{\mid m\mid=N}Y^m A\alpha_m,{(t-Y)}^{-1}\sum_{\mid m\mid=N}Y^m A\alpha_m>.
\end{align*}
As the first two terms of this last expression are polynomials of degree less than or equal to $2N-1$ and $N-1$ respectively, recalling that $p$ has degree less than or equal to $2N+1$, we see that the third term in the above expression must have the form,
\be\label{eq2.19}
<\sum_{\mid m\mid=N}Y^m A\alpha_m,{(t-Y)}^{-1}\sum_{\mid m\mid=N}Y^m A\alpha_m>
=\frac{c_1}{t}+\frac{c_1}{t-1}+q(t),
\ee
where $c_1$ and $c_2$ are scalars and $q$ is a polynomial of degree less than or equal to $2N-1$. \ref{eq2.19} implies that if we set $$\beta=\sum_{\mid m\mid=N}Y^m A\alpha_m$$ and $E$ is the spectral measure for $Y$, then $dE_{\beta,\beta}$ is supported in $\{0,1\}$, which in turn implies that
\be\label{eq2.20}
Y(Y-1)\beta=0.
\ee

Now observe in light of \ref{eq2.20}, that
$$t(t-1){(t-Y)}^{-1}\beta=(t+Y-1)\beta$$
Hence,
\begin{align*}\label{eq2.21}
\lefteqn{t(t-1){(t-Y)}^{-1}\sum_{\mid m\mid=N}t^m A\alpha_m}\\
&\ = \ t(t-1){(t-Y)}^{-1}\sum_{\mid m\mid=N}[(t-Y)Q_m(t)+Y^m] A\alpha_m\\
&\ =\ t(t-1)\sum_{\mid m\mid=N}Q_m(t) A\alpha_m + t(t-1){(t-Y)}^{-1}\sum_{\mid m\mid=N}Y^m A\alpha_m\\
&\ =\ t(t-1)\sum_{\mid m\mid=N}Q_m(t) A\alpha_m + t(t-1){(t-Y)}^{-1}\beta\\
&\ =\ t(t-1)\sum_{\mid m\mid=N}Q_m(t) A\alpha_m + (t+Y-1)\beta.
\end{align*}
As $Q_m(t)$ has degree $N-1$, this implies that
$$t(t-1){(t-Y)}^{-1}\sum_{\mid m\mid=N}t^m A\alpha_m$$
is a vector valued polynomial of degree $N+1$. But the set $\set{t^n}{\abs{n}=N+1}$ forms a basis for the polynomials of degree less than or equal to $N+1$. Hence, there exist vectors
$\alpha_n \in \h,\ \abs{n}=N+1$, such that
\be\label{eq2.22}
t(t-1){(t-Y)}^{-1}\sum_{\mid m\mid=N}t^m A\alpha_m = \sum_{\mid n\mid=N+1}t^n \alpha_n.
\ee
Unraveling the substitutions \ref{eq2.8}, and \ref{eq2.22} becomes
$$\byinv A R_{N}(b)\alpha=\sum_{\mid n\mid=N+1}\frac{1}{b^n}\alpha_n,$$
or,
\be\label{eq2.23}
R_{N+1}(b)\alpha=\sum_{\mid n\mid=N+1}\frac{1}{b^n}\alpha_n.
\ee
In addition, recalling that $(\alphanin{I_N},A,Y)$ is an HVMS of size $N$, we see from Proposition \ref{prop2.1} that
\be\label{eq2.24}
R_{l}(b)\alpha=\sum_{\mid n\mid=l}\frac{1}{b^n}\alpha_n \text{ for } l=1,\ldots,N.
\ee
Taken together, \ref{eq2.23} and \ref{eq2.24} imply \ref{eq2.10.2}.
\end{proof}
%%%%%%%%%%%%%%%%%%%
%%%%%%%%%%%%%%%%%%%
%%%%%%%%%%%%%%%%%%%
%%%%%%%%%%%%%%%%%%%
\section{From HVMSs to Loewner Functions} %section 3%
We let $\pitwo$ denote the set $\set{z \in \ctwo}{\impart{z_1}\ge 0,\impart{z_2}\ge 0 }$. We let $\p$ denote the \emph{Pick class} on $\pitwo$, i.e. the set of holomorphic  functions on $\pitwo$ that have nonnegative imaginary part. If $D \subseteq \rtwo$, we define the Loewner class, $\lofd$, by
$$\lofd=\set{h \in \p}{h \text{ is analytic and real valued on } D }.$$
$\lofd$, which captures a semi-local version of the notion of inner, arises in a variety of problems involving interpolation, the real edge of the wedge theorem, and the analysis of operator monotone functions --- see {\em e.g.} \cite{amy11c}. In this section we wish to consider a fully local version of $\lofd$.  To that end we shall require a number of definitions.
 Let $J_k = I_k \cup \{ (0,0) \}$.
\begin{defin}\label{def3.05}
For $x \in \rtwo$ and $S \subseteq \pitwo$ let us agree to say that $S$ approaches $x$ non-tangentially, $S \stackrel{nt}{\to}x$, if $x \in S^-$ and there exists a constant $c$ such that
$$\norm{z-x} \le c\ \min{\impart{z_1}}{\impart{z_2}}$$
for all $z \in S$
\end{defin}
\begin{defin}\label{def3.10}
Let $h \in \p$ and $x \in \rtwo$. We say that $x$ is a $\cpoint{k}$ of $h$ if $h$ is ``non-tangentially $C^k$ at $x$" i.e. there exists an indexed set of scalars, $\delta = \deltanin{J_k}$, such that if $S \subset \pitwo$ and $S \stackrel{nt}{\to} x$,then
\be\label{eq3.010}
\lim_{\substack{z \to x \\ z \in S}}\frac{h(z)-\sum_{n \in J_k} \delta_n z^n}{{\norm{z}}^k}=0.
\ee
\end{defin}

Evidently, if $h \in \lofd$, $x \in D$, and $x$ is a $\cpoint{k}$ of $h$, then $\delta$, as uniquely determined by \ref{eq3.010}, has the property that $\delta_n$ is real whenever $n \in J_k$. This suggests the following definition as a reasonable localization of the Loewner class.
\begin{defin}\label{def3.20}
Let $k \ge 0$ and $x \in \rtwo$. If $h \in \p$, we say that $h$ is Loewner to order $k$ at $x$ if $x$ is a $\cpoint{k}$ of $h$ and if $\delta$, as uniquely determined by
\ref{eq3.010}, has the property that $\delta_n$ is real for all $n \in J_k$.
\end{defin}

We introduce in Definition \ref{def3.40} below a class of functions, $\mathcal{L}^N$, obtained by adding three extra minor provisos to the notion in Definition \ref{def3.20}. 
First we shall assume that $k=2N-1$ is odd. Secondly, we wish to consider regularity as $z$ 
approaches infinity non-tangentially rather than as $z$ approaches a finite point $x \in \rtwo$. 
Finally, we shall normalize $h$ to have the value zero at infinity.

To formalize regularity at $\infty$, we introduce the following two definitions.
\begin{defin}\label{def3.25}
If $\{z_n\}$ is a sequence in $\pitwo$, we say $z_n\to \infty$ if $z_n=(\lambda_n,\mu_n)$ and both $\lambda_n \to \infty$ and $\mu_n \to \infty$.
For $S \subseteq \pitwo$ we say that $S$ approaches $\infty$ non-tangentially, $S \stackrel{nt}{\to}\infty$, if there is a sequence $\{z_n\}$ in $S$ such that $z_n\to \infty$ and a constant $c$ such that
\be\label{eq3.015}
\norm{z} \le c\ \min{\impart{z_1}}{\impart{z_2}}
\ee
for all $z \in S$. If $S \stackrel{nt}{\to}\infty$, we let $\adj{S}$ denote the smallest constant such that \ref{eq3.015} holds for all  $z \in S$.
\end{defin}
\begin{defin}\label{def3.26}
Let $\Omega$ be a metric space, $\omega \in \Omega$, and $F:\pitwo \to \Omega$ a map. We say
$$F(z) \to \omega \text{ as } z \stackrel{nt}{\to} \infty$$
if for each $S \subset \pitwo$ such that $S \stackrel{nt}{\to}\infty$,
$$\lim_{\substack{z \to \infty \\ z\in S}}F(z)=\omega.$$
\end{defin}
We now can extend the notion of $\cpoint{k}$ to $\infty.$
\begin{defin}\label{def3.30}
If $h \in \p$ we say $\infty$ is a $\cpoint{k}$ of $h$ if there exists an indexed set of scalars, $\rho = \rhonin{J_k}$, referred to as residues, such that
$$ {\norm{z}}^k(h(z) - \sum_{n \in J_k} \frac{\rho_n}{z^n}) \to 0 \text{ as }z \stackrel{nt}{\to} \infty.$$
\end{defin}
Finally, notice that the residue, $\rho_{(0,0)}$, when it exists, is the limit of $h(z)$ as $z \to \infty$ non-tangentially, and hence we denote it by $h(\infty)$. Our third proviso is to normalize $h$ by requiring that $h(\infty) = 0$.
\begin{defin}\label{def3.40}
For $N$ a positive integer, let $\ln$ denote the set of all $h \in \p$ such that $\infty$ is a $\cpoint{2N-1}$ for $h$ with real residues and $h(\infty)=0.  $
\end{defin}

Let us note that Theorem \ref{thma5} implies that any function in 
${\mathcal L}^1$ must have a representation as in (\ref{eqa1.2.1}). 
In previous work \cite{amy11c}, we required $Y$ to be a projection. This was inspired
by representations on the bidisk, as in \cite{baltre98} and \cite{agmc_bid}.
Here, we do not require $Y$ to be a projection necessarily. But, in order for
$(\alphanin{I_N},Y,A)$ to be an HVMS of size $N$, the operator $Y(1-Y)$ annihilates
$\alpha_{(l,0)} $ and $\alpha_{(0,l)}$ for $1 \leq l \leq N$. So these vectors ``think''
$Y$ is a projection.

We now can formulate the main result of this section.
\begin{thm}\label{prop3.10}
Suppose that $\h$ is a Hilbert space, A is a densely defined self-adjoint operator on $\h$, $\alpha \in \h$, and $h \in \p$ is defined by the type I Nevanlinna representation,
\be\label{eq3.020}
h(z)=<{(A-z_Y)}^{-1}\alpha,\alpha>, \ \ \ z \in \pitwo.
\ee
If $(\alphan,A,Y)$ is a 
 of size $N$ and $\alpha = \alpha_{(1,0)}+\alpha_{(0,1)}$, then $h \in \ln$. Furthermore, if $r_l$  are the scalar $(Y,\alpha)$-moments of $A$ (as given by the formulas \ref{eq2.5.1} and \ref{eq2.5.2}) and  $\rho_n$ are the residues of $h$ (as given in Definition \ref{def3.30}),then
\be\label{eq3.025}
\sum_{|n|=l} \frac{\rho_n}{b^n}=-r_l(b)
\ee
for $l=1,\ldots,2N-1$.
\end{thm}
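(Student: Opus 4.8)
The plan is to derive the non-tangential asymptotic expansion of $h$ directly from the resolvent identity and the formula (\ref{eq2.4}) in Proposition~\ref{prop2.1}, which already expresses the vectors $R_l(z)\alpha$ as the degree-$l$ homogeneous part of the formal series. First I would set up the telescoping expansion of the resolvent. Write $(A-z_Y)^{-1} = -z_Y^{-1} - z_Y^{-1}A z_Y^{-1} - \cdots$; more precisely, use the algebraic identity
\be
(A-z_Y)^{-1} \= -\sum_{l=1}^{2N-1} z_Y^{-1}(Az_Y^{-1})^{l-1} \ + \ (A-z_Y)^{-1}(Az_Y^{-1})^{2N-1},
\notag
\ee
valid on the appropriate domain. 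Pairing with $\alpha$ and using $R_l(z)\alpha = z_Y^{-1}(Az_Y^{-1})^{l-1}\alpha$ (legitimate for $l \le N$ by Proposition~\ref{prop2.1}, and extended to $l$ up to $2N-1$ by splitting the powers of $A$ between the two resolvent factors and invoking $R_j(\bar z) \subseteq R_j(z)^*$), I get
\be
h(z) \= -\sum_{l=1}^{2N-1} \langle z_Y^{-1}(Az_Y^{-1})^{l-1}\alpha,\alpha\rangle \ + \ \langle (A-z_Y)^{-1}(Az_Y^{-1})^{\lceil(2N-1)/2\rceil}\alpha, (A z_{\bar Y}^{-1})^{\lfloor(2N-1)/2\rfloor} ((\overline{z})_Y^{-1})^*\alpha\rangle.
\notag
\ee

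Next I would identify the main terms. For $l$ odd, $l = 2j-1$, the term $\langle z_Y^{-1}(Az_Y^{-1})^{l-1}\alpha,\alpha\rangle$ equals $\langle R_j(z)\alpha, A R_{j-1}(z)^*\alpha\rangle = r_l(z)$ by (\ref{eq2.5.1}); for $l$ even, $l=2j$, it equals $\langle R_j(z)\alpha, A R_j(z)^*\alpha\rangle = r_l(z)$ by (\ref{eq2.5.2}). By Proposition~\ref{prop2.1} and Remark~\ref{rem2.1}, each $r_l$ is a homogeneous polynomial of degree $l$ in $1/b$, hence extends to the stated rational function of $z$, and $\sum_{|n|=l}\rho_n/b^n = -r_l(b)$ is then forced as the defining identity of the residues $\rho_n$ — this gives (\ref{eq3.025}) once the error estimate is in place. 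So the remaining content is to show that the error term is $o(\|z\|^{-(2N-1)})$ as $z \stackrel{nt}{\to}\infty$, and that the partial sums are exactly $\sum_{n\in J_{2N-1}}\rho_n/z^n$ with $\rho_{(0,0)} = 0$ (the latter because every $R_l(z)\alpha$ decays, so $h(\infty)=0$).

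The main obstacle will be the error estimate. I need a bound of the form $\|(A-z_Y)^{-1}\| \le C/\min\{\Im z_1,\Im z_2\}$ on non-tangential approach regions, together with control of $\|(Az_Y^{-1})^{\lfloor(2N-1)/2\rfloor}\alpha\|$ and its conjugate counterpart. For the resolvent bound I would use that $A$ is self-adjoint and $Y$ is a positive contraction, so $\Im\langle z_Y v,v\rangle = \Im z_1\,\langle Yv,v\rangle + \Im z_2\,\langle(1-Y)v,v\rangle \ge \min\{\Im z_1,\Im z_2\}\|v\|^2$, giving $\|(A-z_Y)^{-1}\| \le 1/\min\{\Im z_1,\Im z_2\}$. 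For the vector factors: $(Az_Y^{-1})^{k}\alpha$ with $k \le N-1$ makes sense by Definition~\ref{def2.1} (finite complex vector moments, which hold since $(\alphan,A,Y)$ is an HVMS), and by (\ref{eq2.4}) it is $z_Y$ times a vector that is $O(\|z\|^{-(k+1)})$ in the non-tangential region — so $(Az_Y^{-1})^{k}\alpha = O(\|z\|^{-k})$, and similarly for $((\overline z)_Y^{-1})^*$-type factors using $R_j(\bar z)\subseteq R_j(z)^*$. Putting $k_1 + k_2 = 2N-1$ with $k_i$ near $(2N-1)/2$, the error is $O\big(\frac{1}{\min\{\Im z_1,\Im z_2\}}\cdot \|z\|^{-(2N-1)}\big)$, which on a non-tangential region where $\|z\| \le c\min\{\Im z_1,\Im z_2\}$ is $o(\|z\|^{-(2N-1)})$ — in fact $O(\|z\|^{-2N})$. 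I would double-check the bookkeeping of exactly how many $A$'s can be absorbed into each side (this is where the hypothesis that $A$ has vector moments only to order $N-1$, but $2N-1$ scalar moments, is used: splitting $(Az_Y^{-1})^{2N-1}$ as $(Az_Y^{-1})^{\le N-1}$ on one side and its adjoint-type analogue on the other keeps both vector factors within the allowed domain, with one leftover $z_Y^{-1}$ feeding the resolvent bound).
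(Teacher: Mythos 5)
Your overall skeleton (expand the resolvent, identify the first $2N-1$ terms with $-r_l$, estimate the remainder non-tangentially) is the same as the paper's, but the remainder estimate as you propose it does not work, and the failure is exactly at the bookkeeping you flagged. The remainder carries $2N-1$ powers of $A$, and the HVMS hypothesis (\ref{eq2.1.5}) only puts $\alpha_n$ in $\dom{A}$ for $|n|\le N-1$, i.e.\ gives vector moments to order $N-1$; so at most $N-1$ powers of $A$ can be applied to $\alpha$ on each side of the inner product, and $(N-1)+(N-1)=2N-2<2N-1$. Your explicit remainder already shows the problem: $(Az_Y^{-1})^{\lceil(2N-1)/2\rceil}\alpha=(Az_Y^{-1})^{N}\alpha$ need not exist, since it requires $A\alpha_n$ for $|n|=N$. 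The only way to place the leftover $A$ is to absorb it into $A(A-z_Y)^{-1}=1+z_Y(A-z_Y)^{-1}$, which is bounded on non-tangential regions but does \emph{not} decay; once you do that, the factor $\|(A-z_Y)^{-1}\|\le 1/\min\{\Im z_1,\Im z_2\}$ is no longer available, and pure norm bounds give only $O(\|z\|^{-(2N-1)})$ for the remainder, not the $o(\|z\|^{-(2N-1)})$ needed to conclude $h\in\ln$ and (\ref{eq3.025}). Your claimed $O(\|z\|^{-2N})$ bound is therefore unobtainable under the stated hypotheses (it would essentially require moments one order higher).

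The missing ingredient is the paper's Claim~\ref{cl3.20}: for \emph{fixed} vectors $\beta,\gamma$, $\langle A(A-z_Y)^{-1}\beta,\gamma\rangle\to 0$ as $z\stackrel{nt}{\to}\infty$. With the remainder written (as in Claim~\ref{cl3.10}) in the form $\sum_{|m|=N-1,\,|n|=N} z^{-(m+n)}\langle A(A-z_Y)^{-1}A\alpha_m,\alpha_n\rangle$, the explicit $z^{-(m+n)}$ factors give the $\|z\|^{-(2N-1)}$ rate and Claim~\ref{cl3.20} upgrades $O$ to $o$. That claim is not a norm estimate: the paper proves it by a weak-convergence argument along rays (Lemma~\ref{lem3.40}, using density of $\delta_Y\dom{A}$ and uniform boundedness of the resolvents) combined with a proximity estimate (Lemmas~\ref{lem3.20} and \ref{lem3.30}) to pass from radial to non-tangential limits. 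Separately, your identification of the main terms with $r_l(z)$ for $l>N$ also needs care for the same domain reason — the vector $z_Y^{-1}(Az_Y^{-1})^{l-1}\alpha$ does not exist for $l>N$, so the splitting across the inner product must be performed inductively, moving one factor at a time using the bounded operators $A(A-z_Y)^{-1}$ and $(A-z_Y)^{-1}A$ as in the paper's proof of Claim~\ref{cl3.10}; that part of your sketch is repairable, but the error estimate requires the new idea above.
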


The remainder of the section will be devoted to the proof of Theorem \ref{prop3.10}. Accordingly, fix an HVMS of size $N$, $(\alphan,A,Y)$, with the property that $A$ is densely defined and self-adjoint, set $\alpha = \alpha_{(1,0)}+\alpha_{(0,1)}$, and assume that $h$ is given by \ref{eq3.020}.
The point $z$ will always lie in $\Pi^2$, so $(A - z_Y)$ is invertible.

Observe that as,
\be
\label{eq3.02.9}
\ares=(A-\zy+\zy)\res=1+\zy\res,
\ee
the operator $\ares$ is bounded. Likewise, the operator $\resa$ is bounded. Also, we have the following simple identities involving these operators:
\begin{align}
\label{eq3.030}
\zy \resa &= \ares \zy
\\
\label{eq3.040}
\res &= -\zyinv + \zyinv \ares
\\
\label{eq3.050}
\res &= -\zyinv + \resa \zyinv.
\end{align}
\begin{claim}\label{cl3.10}
\be\label{eq3.060}
<\res\alpha,\alpha>=-\sum_{k=1}^{2N-1}r_k(z) + <\ares \zy R_N(z)\alpha,R_N(z)^*\alpha>. \notag
\ee
\end{claim}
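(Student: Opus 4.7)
The plan is to prove Claim~\ref{cl3.10} by iterating the two one-sided resolvent identities (\ref{eq3.040}) and (\ref{eq3.050}): first apply (\ref{eq3.040}) exactly $N$ times from the left to produce the terms $R_1(z),\ldots,R_N(z)$, then apply (\ref{eq3.050}) an additional $N-1$ times to the surviving copy of $S := (A-z_Y)^{-1}$ to produce $R_{N+1}(z),\ldots,R_{2N-1}(z)$. The algebraic identity $(z_Y^{-1}A)^l z_Y^{-1} = R_{l+1}(z)$ keeps the bookkeeping at each stage transparent.

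Iterating in this way produces
\[
S \ = \ -\sum_{l=1}^{2N-1} R_l(z) \ + \ (z_Y^{-1}A)^N\, S\, (A z_Y^{-1})^{N-1}.
\]
The remainder factors using $(z_Y^{-1}A)^N = R_N(z)\,A$ and $(A z_Y^{-1})^{N-1} = z_Y\,R_N(z)$ into $R_N(z)\,A\,S\,z_Y\,R_N(z)$. Pairing against $\alpha$ and shifting the leftmost $R_N(z)$ into the second slot of the inner product---using $\alpha \in \dom{R_N(\bar z)} \subseteq \dom{R_N(z)^*}$---gives
\[
\langle (z_Y^{-1}A)^N S (A z_Y^{-1})^{N-1}\alpha,\ \alpha\rangle \ = \ \langle A(A-z_Y)^{-1}\, z_Y\, R_N(z)\alpha,\ R_N(z)^*\alpha\rangle,
\]
which is precisely the remainder in the claim.

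It then remains to verify $\langle R_l(z)\alpha, \alpha\rangle = r_l(z)$ for $1 \leq l \leq 2N-1$. For $l=1$ this is Definition (\ref{eq2.5.01}). For $l \geq 2$ one uses the algebraic factorization $R_l = R_{l_1}\cdot A\cdot R_{l_2}$ with $l_1+l_2=l$ and $l_1,l_2 \geq 1$, combined with self-adjointness of $A$ and the identity $R_m(z)^* \supseteq R_m(\bar z)$, to rewrite the inner product in the canonical forms (\ref{eq2.5.1}) or (\ref{eq2.5.2}). For even $l = 2m$ take $l_1 = l_2 = m$; for odd $l = 2m-1$ with $m \leq N-1$ take $l_1 = m-1,\ l_2 = m$; and for the top-order case $l = 2N-1$ use the mirror split $l_1 = N,\ l_2 = N-1$, which avoids placing $A$ adjacent to the vector $R_N(z)\alpha$ (which need not lie in $\dom{A}$).

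The main obstacle is the domain bookkeeping for the unbounded symmetric operator $A$: the HVMS membership condition (\ref{eq2.1.5}) only guarantees $\alpha_n \in \dom{A}$ for $|n| \leq N-1$, so $R_N(z)\alpha$ is typically outside $\dom{A}$. This is precisely why the remainder in Claim~\ref{cl3.10} is packaged in the form $A(A-z_Y)^{-1}\, z_Y\, R_N(z)\alpha$: the bounded operator $A(A-z_Y)^{-1}$, whose boundedness was recorded in (\ref{eq3.02.9}), is applied to the well-defined vector $z_Y R_N(z)\alpha$, side-stepping the question of whether $R_N(z)\alpha$ itself lies in $\dom{A}$.
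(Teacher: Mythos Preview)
Your direct ``unrolled'' argument is essentially the same algebra as the paper's induction, just organized differently: the paper proves the claim by induction on $N$, at each step peeling off $r_{2N}(z)$ and $r_{2N+1}(z)$ from the remainder term using (\ref{eq3.040}) and (\ref{eq3.050}) inside the inner product, whereas you iterate both identities up front to get a single operator identity and then pair with $\alpha$.

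There is, however, a genuine domain gap in your write-up. Your displayed identity
\[
S \ = \ -\sum_{l=1}^{2N-1} R_l(z) \ + \ (z_Y^{-1}A)^N\, S\, (A z_Y^{-1})^{N-1}
\]
is only a formal operator identity; when you apply it to $\alpha$ the sum produces the vectors $R_l(z)\alpha$ for $N<l\le 2N-1$, and these are \emph{not} defined. The HVMS hypothesis (\ref{eq2.1.5}) only guarantees $\alpha\in\dom{R_l(z)}$ for $l\le N$ (Proposition~\ref{prop2.1}); for $l=N+1$ one would already need $R_N(z)\alpha\in\dom{A}$, which you yourself note can fail. So the line ``It then remains to verify $\langle R_l(z)\alpha,\alpha\rangle=r_l(z)$'' presupposes quantities that do not exist for $l>N$.

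The fix is exactly the factorization idea you describe afterwards, but it must be applied \emph{before} you evaluate at $\alpha$, not after. Concretely: iterate (\ref{eq3.040}) only $N$ times to get the well-defined vector identity
\[
S\alpha \ = \ -\sum_{l=1}^{N} R_l(z)\alpha \ + \ (z_Y^{-1}A)^N S\alpha,
\]
pair with $\alpha$, and then move $N-1$ of the factors $z_Y^{-1}A$ across the inner product to the second slot via $R_m(\bar z)\subseteq R_m(z)^*$ (legitimate since $R_m(\bar z)\alpha\in\dom{A}$ for $m\le N-1$). Iterating (\ref{eq3.050}) on the remaining $S$ inside the inner product then produces the terms $r_{N+1}(z),\ldots,r_{2N-1}(z)$ directly in their canonical forms (\ref{eq2.5.1})--(\ref{eq2.5.2}), with no undefined vectors ever appearing. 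This is precisely what the paper's induction does, one step at a time; your version is the same computation with the bookkeeping done all at once, provided you keep everything inside the inner product.
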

Note that as $(\alphan,A,Y)$ is an HVMS, Condition (ii) of Proposition \ref{prop2.2} guarantees that $\alpha \in \dom{R_N(z)}$ and in addition, that the residues, $r_k(z)$, $k=1,\ldots, 2N-1$, are well defined by  \blue equations \ref{eq2.5.01}  to \ref{eq2.5.2}. \black
 Thus, the expression that appears on the right side of the claim is well defined.

To prove Claim \ref{cl3.10} we proceed by induction. Note that when $N=1$ the claim follows immediately from \ref{eq3.040}. Suppose the claim holds for HVMSs of size $N$. If $(\alphan,A,Y)$ is an HVMS of size $N+1$, then as $(\alphan,A,Y)$ is also a HVMS of size $N$, the inductive hypothesis yields that
\begin{equation}\label{eq3.070}
<\res\alpha,\alpha> = -\sum_{k=1}^{2N-1}r_k(z) + <\ares \zy R_N(z)\alpha,R_N(z)^*\alpha>.
\end{equation}
But,
\begin{align*}
&<\ares \zy R_N(z)\alpha,R_N(z)^*\alpha>\\ 
(i)\ \ \ \ \ &=\ <\res \zy R_N(z)\alpha,AR_N(z)^*\alpha>\\ 
(ii)\ \ \ \ &=\ <(-\zyinv+\resa\zyinv) \zy R_N(z)\alpha,AR_N(z)^*\alpha>\\ 
&=\ -<R_N(z)\alpha,AR_N(z)^*\alpha>
+<\resa R_N(z)\alpha,AR_N(z)^*\alpha>\\ 
(iii)\ \ \ &=\ -<R_N(z)\alpha,AR_N(z)^*\alpha>\\
&\qquad +<(-\zyinv + \zyinv \ares)AR_N(z)\alpha,AR_N(z)^*\alpha>\\
&=\ -<R_N(z)\alpha,AR_N(z)^*\alpha>
-<\zyinv AR_N(z)\alpha,AR_N(z)^*\alpha>\\
&\qquad +<\zyinv \ares AR_N(z)\alpha,AR_N(z)^*\alpha>\\ 
&=\ -<R_N(z)\alpha,AR_N(z)^*\alpha>-<\zyinv AR_N(z)\alpha,AR_N(z)^*\alpha>\\
&\qquad +<\ares \zy \zyinv AR_N(z)\alpha,{\zyinv}^*AR_N(z)^*\alpha>\\ 
(iv)\ \ \ \ &=\ -<R_N(z)\alpha,AR_N(z)^*\alpha>-<R_{N+1}(z)\alpha,AR_N(z)^*\alpha>\\
&\qquad +<\ares \zy R_{N+1}(z)\alpha,R_{N+1}(z)^*\alpha>\\ 
(v)\ \ \ \ \ &=\ -r_{2N}(z)-r_{2N+1}(z)\\
&\qquad +<\ares \zy R_{N+1}(z)\alpha,R_{N+1}(z)^*\alpha>.
\end{align*}
Here, the following facts were used.
\begin{align*}
(i)&\ \ \text{as }(\alphan,A,Y) \text{ is an HVMS of size }N+1, \ R_N(z)^*\alpha = R_N(\bar z) \alpha \in \dom{A}\\
(ii)&\ \ \ref{eq3.050}\\
(iii)&\ \ \ref{eq3.040}\\
(iv)&\ \ R_{N+1}(z)\alpha=\zyinv AR_N(z)\\
(v)&\ \ \ref{eq2.5.1} \text{ and }\ref{eq2.5.2}
\end{align*}
Combining the result of this calculation with \ref{eq3.070}, we deduce that
\begin{equation}\label{eq3.080}
<\res\alpha,\alpha> = -\sum_{k=1}^{2N+1}r_k(z) + <\ares \zy R_{N+1}(z)\alpha,R_{N+1}(z)^*\alpha>, \notag
\end{equation}
which is \ref{eq3.060} with $N$ replaced with $N+1$. This concludes the proof of Claim \ref{cl3.10}.

Now observe that both the facts we need to prove to establish Theorem \ref{prop3.10}, that $h \in \ln$ and \ref{eq3.025}, will follow from Claim \ref{cl3.10} if we can show that
\be\label{eq3.090}
{\norm{z}}^{2N-1}<\ares \zy R_N(z)\alpha,R_N(z)^*\alpha> \to 0 \ \ \text{ as }\ \ z \stackrel{nt}{\to} \infty.
\ee
On the other hand, we claim that \ref{eq3.090} will follow if we can show
\begin{claim}\label{cl3.20}
If $\beta,\gamma \in \h$, then
\be\label{eq3.100}
<\ares \beta,\gamma> \to 0 \ \ \text{ as }\ \ z \stackrel{nt}{\to} \infty.
\ee
\end{claim}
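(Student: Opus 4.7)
My plan is to prove Claim \ref{cl3.20} in three short steps: (i) establish a uniform operator norm bound on $\ares$ over non-tangential approach regions to infinity; (ii) compute the limit when $\gamma$ lies in the dense subspace $\dom{A}$; and (iii) conclude for arbitrary $\gamma \in \h$ by density.

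For step (i), identity (\ref{eq3.02.9}) already gives $\ares = I + z_Y \res$. Since $Y$ and $I - Y$ are positive contractions, $\impart{z_Y} = (\impart{z_1})\, Y + (\impart{z_2})(I - Y) \geq \delta I$, where $\delta := {\rm min}\{\impart{z_1}, \impart{z_2}\}$. Because $A$ is self-adjoint, the standard bounded-below estimate $\|(A-z_Y)u\| \geq \delta \|u\|$ on $\dom{A}$ gives $\|\res\| \leq 1/\delta$, and consequently $\|\ares\| \leq 1 + (|z_1| + |z_2|)/\delta$. On any non-tangential approach set $S \stackrel{nt}{\to}\infty$, $\norm{z} \leq c\delta$, so this norm is uniformly bounded on $S$ by some constant $M = M(c)$.

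For step (ii), $\res \beta$ automatically lies in $\dom{A}$ because $\res$ maps $\h$ onto $\dom{A}$. Hence, for $\gamma \in \dom{A}$, self-adjointness of $A$ permits transferring $A$ onto the right slot of the inner product:
\[
\langle \ares \beta, \gamma \rangle = \langle \res \beta, A\gamma \rangle .
\]
The modulus of the right hand side is bounded by $\|\res\|\|\beta\|\|A\gamma\| \leq \|\beta\|\|A\gamma\|/\delta$, which tends to zero as $z \stackrel{nt}{\to}\infty$ because $\delta \to \infty$.

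For step (iii), given arbitrary $\gamma \in \h$ and $\varepsilon > 0$, pick $\gamma' \in \dom{A}$ with $\|\gamma - \gamma'\| < \varepsilon$; then on $S$,
\[
|\langle \ares \beta, \gamma \rangle| \leq |\langle \ares \beta, \gamma' \rangle| + M\|\beta\|\varepsilon ,
\]
the first term tending to zero by (ii). Since $\varepsilon$ was arbitrary, the left hand side tends to zero as well. The main subtlety to guard against is that $A$ is unbounded and only densely defined, so that the adjoint identity $\langle Au,v\rangle = \langle u, Av\rangle$ may be used only when both $u, v \in \dom{A}$; step (ii) is arranged precisely so that this condition is satisfied, after which the uniform bound from step (i) handles the passage to general $\gamma$.
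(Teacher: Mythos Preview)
Your proof is correct, and it takes a genuinely different (and shorter) route than the paper.

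The paper's argument proceeds via a general ``proximality'' framework: it first shows (Lemma~\ref{lem3.30}) that $z \mapsto \ares$ satisfies a Lipschitz-type estimate $\norm{F(z)-F(w)} \le c\,\norm{z-w}/\norm{z}$ on non-tangential approach regions, then proves separately (Lemma~\ref{lem3.40}) that the radial limits $\lim_{s\to\infty}\langle A(A-s\delta_Y)^{-1}\beta,\gamma\rangle = 0$ exist along each ray $s\delta$, and finally invokes a general principle (Lemma~\ref{lem3.20}) that proximality plus radial limits yields the non-tangential limit. The radial-limit step itself uses a density argument with the subspace $\delta_Y\,\dom{A}$ and the identity $\ares = I + z_Y\res$.

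You bypass this machinery entirely with a single density argument in the $\gamma$-variable: for $\gamma \in \dom{A}$ you move $A$ across the inner product and get the decay directly from $\norm{\res}\le 1/\delta$, and the uniform bound on $\norm{\ares}$ (which you derive from the same identity the paper uses) then carries the limit to arbitrary $\gamma$. This is more elementary and self-contained. What the paper's approach buys is modularity: the proximality lemma cleanly separates the ``non-tangential versus radial'' issue from the analysis of the specific function, a device that can be reused elsewhere. For the purpose of proving this particular claim, however, your direct argument is preferable.
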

To see how Claim \ref{cl3.20} implies \ref{eq3.090} we use the following simple property of sets that approach $\infty$ non-tangentially.
\begin{lem}\label{lem3.10}
If $n$ is a multi-index, $S\subset \pitwo$ and $S \stackrel{nt}{\to} \infty$, then
$$|\frac{1}{z^n}| \le {(\adj{S})}^{|n|} \norm{z}^{-|n|}$$
for all $z \in S$.
\end{lem}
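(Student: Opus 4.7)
The plan is to prove the bound by controlling each coordinate $|z_j|$ from below in terms of $\|z\|$, using the non-tangential approach condition, and then multiplying across the two coordinates.

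First, let $c = \adj{S}$. By the very definition of $\adj{S}$ in Definition \ref{def3.25}, we have
\[
\norm{z} \ \le \ c\cdot \min\{\impart{z_1},\impart{z_2}\}
\]
for every $z\in S$. In particular, both $\impart{z_1}\ge \norm{z}/c$ and $\impart{z_2}\ge \norm{z}/c$. Since $z \in \pitwo$ forces $\impart{z_j}\ge 0$, we get
\[
|z_j| \ \ge\ \impart{z_j} \ \ge\ \frac{\norm{z}}{c}, \qquad j=1,2.
\]

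Next, write $n=(n_1,n_2)$ and recall $z^n = z_1^{n_1}z_2^{n_2}$. Raising the two inequalities above to the powers $n_1$ and $n_2$ respectively and multiplying gives
\[
|z^n| \ =\ |z_1|^{n_1}|z_2|^{n_2}\ \ge\ \left(\frac{\norm{z}}{c}\right)^{n_1}\left(\frac{\norm{z}}{c}\right)^{n_2}\ =\ \frac{\norm{z}^{|n|}}{c^{|n|}}.
\]
Taking reciprocals yields $|1/z^n| \le c^{|n|}\norm{z}^{-|n|} = (\adj{S})^{|n|}\norm{z}^{-|n|}$, which is the claimed bound.

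I do not foresee any real obstacle: the lemma is essentially a bookkeeping fact about the definition of non-tangential approach, and the only substantive inputs are (a) $|z_j|\ge \impart{z_j}$ for $z_j$ in the closed upper half plane, and (b) the definition of $\adj{S}$. The whole argument is a two line calculation once these are invoked.
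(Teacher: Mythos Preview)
Your proof is correct and follows essentially the same approach as the paper: both use the definition of $\adj{S}$ to bound $|z_j|\ge \impart{z_j}\ge \norm{z}/\adj{S}$ for $j=1,2$, then raise to the $n_j$-th powers and multiply.
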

\begin{proof}
If $z \in S$, then Definition \ref{def3.25} implies that
\begin{align*}
\norm{z} &\le \adj{S}\ \min{\impart{z_1}}{\impart{z_2}}\\ 
&\le \adj{S}\ \impart{z_1}\\ 
&\le \adj{S}\ |z_1|.
\end{align*}
Hence, $$|z_1|^{-n_1} \le \adj{S}^{n_1}\ \norm{z}^{-n_1}.$$
Likewise, $$|z_2|^{-n_2} \le \adj{S}^{n_2}\ \norm{z}^{-n_2}.$$
The lemma follows by multiplying these last two inequalities together.
\end{proof}
Now, using Proposition \ref{prop2.1}, if $N \geq 2$, 
\begin{align*}
&<\ares \zy R_N(z)\alpha,R_N(z)^*\alpha>\\ 
&=\ <\ares A R_{N-1}(z)\alpha,R_N(z)^*\alpha>\\ 
&=\ <\ares A\sum_{\mid m\mid=N-1}\frac{1}{z^m}\alpha_m,\sum_{\mid n\mid=N}\frac{1}{z^n}\alpha_n>\\ 
&=\ \sum_{\substack{\mid m\mid=N-1\\ \mid n\mid=N}} \frac{1}{z^{m+n}}<\ares A \alpha_m,\alpha_n>.
\end{align*}
Thus, using Lemma \ref{lem3.10}, we see that if $S\stackrel{nt}{\to} \infty$ and $z \in S$, then
\begin{align*}
&|<\ares \zy R_N(z)\alpha,R_N(z)^*\alpha>|\\ 
&\le \sum_{\substack{\mid m\mid=N-1\\ \mid n\mid=N}} |\frac{1}{z^{m+n}}|\ |<\ares A \alpha_m,\alpha_n>|\\ 
& \le \adj{S}^{2N-1}\norm{z}^{-(2N-1)} \sum_{\substack{\mid m\mid=N-1\\ \mid n\mid=N}} |<\ares A \alpha_m,\alpha_n>|.
\end{align*}
When $N = 1$, we get
\begin{align*}
&| <\ares \zy R_1(z)\alpha,R_1(z)^*\alpha>| \\ 
&=\ |\sum_{\substack{ \mid n\mid=1}} \frac{1}{z^{n}}<\ares  \alpha,\alpha_n>| \\
& \leq \ \adj{S}\norm{z}^{-1} \sum_{\substack{ \mid n\mid=1}} |<\ares \alpha,\alpha_n>|.
\end{align*}
So we see that Claim \ref{cl3.20} does indeed imply \ref{eq3.090}.

There remains to prove Claim \ref{cl3.20}. For this we shall require three lemmas.
These lemmas involve the notion of a \emph{proximity estimate}, an idea which we make precise in the following definition.
\begin{defin}\label{def3.50}
Let $\Omega$ be a metric space and $F:\pitwo \to \Omega$ a map. We say that $F$ is proximal (or more precisely, proximal at $\infty$) if for each $S\subset \pitwo$ such that $S \stackrel{nt}{\to} \infty$, there exists a constant $c$ such that
\be\label{eq3.110}
d(F(z),F(w)) \le c\frac{\norm{z-w}}{\norm{z}}
\ee
for all $z,w \in S$. We refer to the inequality \ref{eq3.110} as a proximity estimate.
\end{defin}
It turns out that frequently, as a consequence of various forms of the Schwarz Lemma, quantities that are formed from holomorphic functions satisfy proximity estimates. In such cases, the following lemma greatly simplifies the analysis of non-tangential regularity.
\begin{lem}\label{lem3.20}
Let $\Omega$ be a metric space, let $\omega \in \Omega$ and let $F:\pitwo \to \Omega$ be a proximal map. $F(z) \to \omega$ as $z \stackrel{nt}{\to} \infty$ if and only if for each $\delta \in \pitwo$,
\be\label{eq3.120}
\lim_{s\to \infty}F(s \delta)=\omega.
\ee
\end{lem}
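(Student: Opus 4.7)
The plan is to reduce the question of non-tangential convergence on an arbitrary approach set $S$ to convergence along a compact family of rays, using the proximity estimate to upgrade pointwise convergence (in direction) to uniform convergence.

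The forward direction is immediate from Definition~\ref{def3.26}: for any $\delta \in \pitwo$, the ray $\{s\delta : s > 0\}$ itself satisfies $\{s\delta\} \stackrel{nt}{\to}\infty$ (with $\adj{} = \|\delta\|/\min\{\Im\delta_1,\Im\delta_2\}$), so the global hypothesis on $F$ forces $F(s\delta) \to \omega$ as $s \to \infty$.

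For the converse, fix an arbitrary $S \stackrel{nt}{\to}\infty$ and set $c = \adj{S}$. Each $z \in S$ decomposes as $z = \|z\|\,\delta_z$ with $\delta_z = z/\|z\|$ lying in the set
$$K := \{\delta \in \pitwo : \|\delta\| = 1,\ \Im \delta_1 \ge 1/c,\ \Im \delta_2 \ge 1/c\},$$
which is a compact subset of $\pitwo$. The enlarged set $\tilde S := \{s\delta : s > 0,\ \delta \in K\}$ contains $S$ and itself has $\tilde S \stackrel{nt}{\to}\infty$ with $\adj{\tilde S} \le c$. Applying the proximity hypothesis on $\tilde S$ yields a constant $\tilde c$ such that, for every $s > 0$ and every pair $\delta,\delta' \in K$,
$$d(F(s\delta),F(s\delta')) \;\le\; \tilde c\,\frac{\|s\delta - s\delta'\|}{\|s\delta\|} \;=\; \tilde c\,\|\delta - \delta'\|.$$
Hence the family $\{\delta \mapsto F(s\delta)\}_{s>0}$ is uniformly Lipschitz---in particular equicontinuous---on $K$, with constant independent of $s$.

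To finish, combine this equicontinuity with the pointwise convergence $F(s\delta) \to \omega$ (which holds for each $\delta \in K$ by hypothesis) and the compactness of $K$ via a finite-cover argument: given $\varepsilon > 0$, cover $K$ by finitely many balls of radius $\varepsilon/(2\tilde c)$ centered at $\delta_1,\dots,\delta_m$, and choose $R$ so large that $d(F(s\delta_j),\omega) < \varepsilon/2$ for all $s > R$ and all $j$. Then for any $z \in S$ with $\|z\| > R$, writing $z = \|z\|\delta_z$ and picking $\delta_j$ within $\varepsilon/(2\tilde c)$ of $\delta_z$, the triangle inequality gives $d(F(z),\omega) < \varepsilon$. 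The only genuinely technical point is this finite-cover step: because $\Omega$ is merely a metric space (with no order structure to invoke a Dini-type theorem), the uniform Lipschitz bound extracted from proximity is exactly what makes the upgrade from pointwise to uniform convergence possible.
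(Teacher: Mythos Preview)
Your proof is correct. Both your argument and the paper's rest on the same two ingredients: the normalized directions $z/\|z\|$ of points in a nontangential approach set lie in a compact subset $K$ of $\pitwo$, and the proximity estimate controls $d(F(s\delta),F(s\delta'))$ uniformly in $s$ by a constant times $\|\delta-\delta'\|$.

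The packaging differs. The paper argues by contradiction: given a hypothetical sequence $z_l\in S$ with $z_l\to\infty$ and $d(F(z_l),\omega)\ge\varepsilon$, it passes to a subsequence whose normalized directions converge to some $\delta\in K$, then uses the proximity estimate once (comparing $F(z_{l_j})$ to $F(\|z_{l_j}\|\,\delta)$) to derive a contradiction with the ray hypothesis. Your argument is direct: you extract from proximity a uniform Lipschitz bound on the family $\delta\mapsto F(s\delta)$ over $K$, then run a finite-cover/equicontinuity argument to upgrade pointwise convergence on rays to uniform convergence over $K$. Your route yields a slightly stronger conclusion (uniformity over the whole cone $\tilde S$) at the cost of one extra step; the paper's route is marginally shorter but less constructive. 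Both are perfectly standard ways to exploit compactness plus equicontinuity.
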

\begin{proof}
Clearly, if $F(z) \to \omega$ as $z \stackrel{nt}{\to} \infty$, then \ref{eq3.120} holds. To prove the converse we argue by contradiction. Suppose \ref{eq3.120} holds. If it is false that $F(z) \to \omega$ as $z \stackrel{nt}{\to} \infty$, then there exist $\epsilon>0$, $S\subset \pitwo$, and a sequence $\{z_l\}$ in $S$ such that $S \stackrel{nt}{\to} \infty$, $z_l \to \infty$, and
\be\label{eq3.130}
d(F(z_l),\omega)\ge \epsilon
\ee
for all positive $l$. By compactness, there exist $\delta \in {\mathbb{C}}^2$ and a subsequence $z_{l_j}$, such that $\norm{z_{l_j}}^{-1} z_{l_j} \to \delta$ as $j \to \infty.$
In fact, $\delta \in \pitwo$. To see this, let $\delta=(\delta_1,\delta_2)$ and $z_{l_j}=(\lambda_{l_j},\mu_{l_j})$ and observe that
\be\label{eq3.140}
\impart{\delta_1}=\lim_{j \to \infty}\frac{\impart{\lambda_{l_j}}}{\norm{z_{l_j}}}\ge \lim_{j \to \infty}\frac{\min{\impart{\lambda_{l_j}}}{\impart{\mu_{l_j}}}}{\norm{z_{l_j}}} \ge\frac{1}{\adj{S}} > 0,
\ee
Likewise, $\impart{\delta_2} >0$ and we conclude that $\delta \in \pitwo$.

Now, let $w_j=z_{l_j}$ and $s_j=\norm{z_{l_j}}$. By construction we have that $w_j-s_j\delta=o(s_j)$ so that the proximity estimate gives that
\be\label{eq3.150}
d(F(w_j),F(s_j\delta))\le c\frac{\norm{w_j-s_j\delta}}{\norm{s_j\delta}}\to 0.
\ee
Also, \ref{eq3.120} implies that
\be\label{eq3.160}
d(F(s_j\delta),\omega)\to 0.
\ee
Given \ref{eq3.150} and \ref{eq3.160}, the triangle inequality gives that
$$d(F(w_j),\omega))\to 0,$$
contradicting \ref{eq3.130}.
\end{proof}
\begin{lem}\label{lem3.30}
Let $\mathcal{L}(\h)$ denote the algebra of bounded operators on $\h$ equipped with the operator norm. $F:\pitwo \to \mathcal{L}(\h)$, defined by
$$F(z)=\ares \ \ \ \ ,z \in \pitwo,$$
is proximal.
\end{lem}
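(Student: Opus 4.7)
My plan is to establish two uniform bounds on $S$ and then derive the proximity estimate from a short resolvent-type identity. The key identity is
\[
F(z) - F(w) \= F(w)(z_Y - w_Y) (A-z_Y)^{-1},
\]
which follows from the standard resolvent calculation $R(z) - R(w) = R(w)(z_Y - w_Y) R(z)$ and the identity $F(\cdot) = I + (\cdot)_Y R(\cdot)$ (which comes from \ref{eq3.02.9}). Expanding
\[
z_Y R(z) - w_Y R(w) \= (z_Y - w_Y) R(z) + w_Y[R(z) - R(w)]
\]
and substituting the resolvent identity gives $[I + w_Y R(w)](z_Y - w_Y) R(z) = F(w)(z_Y - w_Y) R(z)$, as claimed.

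The first uniform bound is on the resolvent. For any $v \in \dom{A}$ and $z \in \pitwo$, the fact that $0 \le Y \le I$ and $\Im z_1, \Im z_2 > 0$ gives
\[
\Im \langle (A - z_Y)v,v\rangle \= -(\Im z_1)\langle Yv,v\rangle - (\Im z_2)\langle (1-Y)v,v\rangle \ \le\  -\min(\Im z_1,\,\Im z_2)\,\|v\|^2.
\]
By Cauchy-Schwarz this yields $\|(A - z_Y)v\| \ge \min(\Im z_1,\Im z_2)\|v\|$, and since $A$ is self-adjoint and $z_Y$ bounded, $(A - z_Y)^{-1}$ is a bounded operator with
\[
\|(A-z_Y)^{-1}\| \ \le\  \frac{1}{\min(\Im z_1, \Im z_2)}.
\]
For $z \in S \stackrel{nt}{\to}\infty$, Definition \ref{def3.25} converts this to $\|(A-z_Y)^{-1}\| \le \adj{S}/\|z\|$.

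The second uniform bound is $\sup_{w \in S} \|F(w)\| < \infty$. From $F(w) = I + w_Y R(w)$ and the elementary estimate $\|w_Y\| \le |w_1| + |w_2| \le 2\|w\|$, we obtain
\[
\|F(w)\| \ \le\  1 + \|w_Y\| \cdot \|R(w)\| \ \le\  1 + 2\adj{S},
\]
uniformly for $w \in S$. Similarly $\|z_Y - w_Y\| \le 2\|z-w\|$. Combining these three estimates with the identity from the first paragraph gives
\[
\|F(z) - F(w)\| \ \le\  \|F(w)\|\, \|z_Y - w_Y\|\, \|R(z)\| \ \le\  2(1 + 2\adj{S})\adj{S}\ \frac{\|z-w\|}{\|z\|}
\]
for all $z,w \in S$, which is the required proximity estimate with $c = 2\adj{S}(1+2\adj{S})$.

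There is no serious obstacle; the argument is two manipulations plus the standard resolvent estimate from the self-adjointness of $A$ and the positivity of $Y$. The only mild care needed is that $A$ may be unbounded, which is why I use the quadratic-form inequality on $\dom{A}$ rather than manipulating $A$ directly, and use $F(z) = I + z_Y R(z)$ to obtain boundedness of $F(z)$ from boundedness of $R(z)$.
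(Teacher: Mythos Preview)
Your proof is correct and follows essentially the same approach as the paper. Both use the identity $F(\cdot)=I+(\cdot)_Y R(\cdot)$ from \ref{eq3.02.9}, the resolvent identity, and the estimate $\|R(z)\|\le \adj{S}/\|z\|$ on $S$; the only cosmetic difference is that you collect the paper's two-term expansion $(z_Y-w_Y)R(z)+w_YR(w)(z_Y-w_Y)R(z)$ into the factored form $F(w)(z_Y-w_Y)R(z)$, which streamlines the bookkeeping but does not change the argument (and yields a slightly different constant since you use $\|z_Y-w_Y\|\le 2\|z-w\|$ rather than the sharper $\sqrt{2}\|z-w\|$).
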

\begin{proof}
Fix $S\subset \pitwo$ with $S \stackrel{nt}{\to} \infty$. For $z \in \pitwo$ we have that
\be\label{eq3.170}
\norm{z_Y} \le \max{\abs{z_1}}{\abs{z_2}}\le \sqrt{2}\norm{z}.
\ee
Also, as $\impart{A-\zy} = -\impart{\zy} \le -\min{\impart{z_1}}{\impart{z_2}}$, we have that if in addition, $z \in S$, then
\be\label{eq3.180}
\norm{\res} \le \frac{1}{\min{\impart{z_1}}{\impart{z_2}}} \le \frac{\adj{S}}{\norm{z}}.
\ee
Now, using (\ref{eq3.02.9}), we get
\begin{align*}
\lefteqn{F(z)-F(w)}\\ 
&={A(A-z_Y)}^{-1}-{A(A-w_Y)}^{-1}\\ 
&=(1+ z_Y{(A-z_Y)}^{-1})-(1 + w_Y{(A-w_Y)}^{-1})\\ 
&=z_Y{(A-z_Y)}^{-1}-w_Y{(A-w_Y)}^{-1}\\ 
&=(z_Y-w_Y){(A-z_Y)}^{-1}+w_Y({(A-z_Y)}^{-1}-{(A-w_Y)}^{-1})\\ 
&=(z_Y-w_Y){(A-z_Y)}^{-1}+w_Y{(A-w_Y)}^{-1}(z_Y-w_Y){(A-z_Y)}^{-1}.
\end{align*}
Hence using \ref{eq3.170} and \ref{eq3.180},
\begin{align*}
\lefteqn{\norm{F(z)-F(w)}}\\ 
&\leq \ \norm{z_Y-w_Y}\ \norm{{(A-z_Y)}^{-1}}+\norm{w_Y}\ \norm{{(A-w_Y)}^{-1}}\ \norm{z_Y-w_Y}\ \norm{{(A-z_Y)}^{-1}}\\ 
&\le\ \sqrt{2}\ \norm{z-w}\ \frac{\adj{S}} {\norm{z}}+\sqrt{2}\ \norm{w}\ \frac{\adj{S}}{\norm{w}}\ \sqrt{2}\ \norm{z-w}\ \frac{\adj{S}}{\norm{z}}\\ 
&=\ (\sqrt{2}\ \adj{S}+ 2\ \adj{S}^2)\frac{\norm{z-w}}{\norm{z}},
\end{align*}
which is \ref{eq3.110} with $c=\sqrt{2}\adj{S}+ 2\adj{S}^2.$
\end{proof}
\begin{lem}\label{lem3.40}
If $\beta,\gamma \in \h$ and $\delta \in \pitwo$, then
$$\lim_{s \to \infty}<A{(A-t\delta_Y)}^{-1}\beta,\gamma>=0$$
\end{lem}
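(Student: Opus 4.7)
The plan is to use the fact that $\delta_Y$ has uniformly positive imaginary part to show that ${(A - s\delta_Y)}^{-1}$ decays like $1/s$ in operator norm. Since $\delta \in \pitwo$ and $0 \le Y \le I$, we have
\[
\impart{\delta_Y} \= (\impart{\delta_1})Y + (\impart{\delta_2})(I-Y) \ \ge \ \epsilon I
\]
for some $\epsilon > 0$. For $s>0$ real, $A - s\delta_Y$ is then dissipative with $\impart{A - s\delta_Y} \le -s\epsilon I$, whence $R(s) := {(A - s\delta_Y)}^{-1}$ exists as a bounded operator on $\h$ satisfying $\norm{R(s)} \le 1/(s\epsilon)$.

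Next, I would establish a uniform bound on $AR(s)$. Since the range of $R(s)$ lies in $\dom{A}$, one has the identity
\[
AR(s) \= (A - s\delta_Y)R(s) + s\delta_Y R(s) \= I + s\delta_Y R(s),
\]
and the second summand has operator norm at most $s\norm{\delta_Y}\cdot 1/(s\epsilon) = \norm{\delta_Y}/\epsilon$. Therefore $\norm{AR(s)} \le 1 + \norm{\delta_Y}/\epsilon$ uniformly in $s > 0$.

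To finish, first assume $\gamma \in \dom{A}$. Self-adjointness of $A$ together with $R(s)\beta \in \dom{A}$ gives
\[
\langle AR(s)\beta,\gamma\rangle \= \langle R(s)\beta, A\gamma\rangle \= \langle \beta, R(s)^* A\gamma\rangle,
\]
which has modulus at most $\norm{\beta}\cdot\norm{A\gamma}/(s\epsilon) \to 0$ as $s\to\infty$. Extend to arbitrary $\gamma \in \h$ using the density of $\dom{A}$ and the uniform bound $\norm{AR(s)} \le 1 + \norm{\delta_Y}/\epsilon$ by a standard $\eta/2$ approximation argument.

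The main subtlety is keeping careful track of which operators are bounded: $A$ itself is unbounded, but $AR(s)$ is bounded thanks to the identity $AR(s) = I + s\delta_Y R(s)$. Self-adjointness of $A$ is used essentially to move $A$ across the inner product onto $\gamma$, and the $1/s$ decay of $\norm{R(s)}$ (ultimately coming from $\delta \in \pitwo$) is what drives the limit to zero.
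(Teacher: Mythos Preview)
Your proof is correct. Both you and the paper use the identity $A(A-s\delta_Y)^{-1}=I+s\delta_Y(A-s\delta_Y)^{-1}$ together with the resolvent bound $\|(A-s\delta_Y)^{-1}\|\le 1/(s\epsilon)$ to get a uniform bound on $A(A-s\delta_Y)^{-1}$, and then a density argument. The difference is where the density is applied. The paper shows $s\delta_Y(A-s\delta_Y)^{-1}u\to -u$ weakly by first checking it on the dense subspace $\delta_Y\,\mathrm{dom}(A)$ (via the algebraic identity $(\epsilon A-\delta_Y)^{-1}\delta_Y v=-v+(\epsilon A-\delta_Y)^{-1}\epsilon Av$) and then concludes $A(A-s\delta_Y)^{-1}\to 0$ weakly. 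You instead exploit the self-adjointness of $A$ directly: for $\gamma\in\mathrm{dom}(A)$ you move $A$ across the inner product to get the explicit decay $|\langle R(s)\beta,A\gamma\rangle|\le \|\beta\|\|A\gamma\|/(s\epsilon)$, and then approximate a general $\gamma$ by elements of $\mathrm{dom}(A)$. Your route is a bit shorter and avoids introducing the auxiliary dense set $\delta_Y\,\mathrm{dom}(A)$; the paper's route has the minor advantage of establishing the slightly stronger statement that $A(A-s\delta_Y)^{-1}\to 0$ in the weak operator topology, but for the lemma as stated your argument is entirely adequate.
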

\begin{proof}
We claim that for each vector $u\in \h$,
\be\label{eq3.190}
\delta_Y(\epsilon A-\delta_Y)^{-1} u \to -u\ \ \ \text{weakly in }\h 
\ee
as $\epsilon \to 0$. To prove this claim first notice that as $\impart{\delta_Y} \ge \min{\impart{\delta_1}}{\impart{\delta_2}}$, we have both that $\delta_Y$ is invertible and
that $(\epsilon A-\delta_Y)^{-1}$ is uniformly bounded. In particular, as $A$ is densely defined, $\m = \delta_Y \dom{A}$ is dense in $\h$. If $u=\delta_Y v \in \m$, then as $v \in \dom{A}$ and $(\epsilon A-\delta_Y)^{-1}$ is uniformly bounded,
$$(\epsilon A-\delta_Y)^{-1}\epsilon Av \to 0$$
as $\epsilon \to 0$. Hence,
\begin{align*}
(\epsilon A-\delta_Y)^{-1}u&=(\epsilon A-\delta_Y)^{-1}\delta_Y v\\
&=(\epsilon A-\delta_Y)^{-1}((\delta_Y-\epsilon A)+\epsilon A) v\\
&=-v+(\epsilon A-\delta_Y)^{-1}\epsilon A v\\
&\to -v\\
&=-\delta_Y^{-1} u.
\end{align*}
Applying the bounded operator $\delta_Y$, yields that \ref{eq3.190} holds whenever $u\in \m$. As, $\m$ is dense and $\delta_Y(\epsilon A-\delta_Y)^{-1}$ is uniformly bounded, it follows that \ref{eq3.190} holds for all $u\in \h$. This proves the claim.

Now notice that if in the claim, we substitute $\epsilon=s^{-1}$, we deduce that for all $u\in \h,$
$$s\delta_Y(A-s\delta_Y)^{-1} u \to -u\ \ \ \text{weakly in }\h$$
as $s\to \infty.$ Hence, for all $u\in \h,$
$$1+s\delta_Y(A-s\delta_Y)^{-1} u \to 0\ \ \ \text{weakly in }\h$$
as $s\to \infty$. The lemma now follows by observing that from  (\ref{eq3.02.9})
$$1+s\delta_Y(A-s\delta_Y)^{-1} = A{(A- s \delta_Y)}^{-1}$$
\end{proof}

Armed with the above lemmas it is a simple matter to prove Claim \ref{cl3.20} and thereby complete the proof of Theorem \ref{prop3.10}. If $\beta,\gamma \in \h$, then by Lemma \ref{lem3.30} $F(z)=<\ares \beta,\gamma>$ is proximal. As Lemma \ref{lem3.40} gives that $\lim_{s\to \infty}F(s \delta)=0$ whenever $\delta \in \pitwo$, Lemma \ref{lem3.20} yields that $F(z) \to 0$ as $z \stackrel{nt}{\to} \infty$ as was to be proved.
%%%%%%%%%%%%%%%%%
%%%%%%%%%%%%%%%%% Section 4
%%%%%%%%%%%%%%%%%
%%%%%%%%%%%%%%%%%

\section{From Loewner Functions to HVMSs}

In this section we shall formulate and then prove a converse to  Theorem \ref{prop3.10},
using Theorem~\ref{thm2.1}. 
If $h \in \ln$, then it is easy to check that $h$ is type I and accordingly has a Nevanlinna representation of the form,
\be\label{eq4.20}
h(z)=<{(A-z_Y)}^{-1}\alpha,\alpha>, \ \ \ z \in \pitwo,
\ee
where $A$ and $Y$ are operators acting on a Hilbert space $\h$, $A$ is a densely defined and self-adjoint, $Y$ is a positive contraction, and $\alpha \in \h$.

\begin{thm}
\label{thm4.1}
If $h \in \ln$ and $A$, $Y$, and $\alpha$ are such that \ref{eq4.20} holds, then $A$ has real vector $(Y,\alpha)$-moments to order $N-1$ and homogenous scalar $(Y,\alpha)$-moments to order $2N-1$. Furthermore,
\be\label{eq4.30}
\sum_{|n|=l} \frac{\rho_n}{b^n}=-r_l(b)
\ee
whenever $1 \le l \le 2N-1$ and $b \in \rtwoplus$, where $\rho_n$ are the residues of $h$.
\end{thm}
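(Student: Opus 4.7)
The plan is to reduce to the one-variable Hamburger theorem (Theorem \ref{thma3}) by slicing $h$ along rays from the origin. For fixed $b \in \rtwoplus$ set
\[
g(w) \ := \ h(wb), \qquad w \in \Pi.
\]
Since $b_1,b_2>0$, a non-tangential approach of $w$ to $\infty$ in $\Pi$ forces a non-tangential approach of $wb$ to $\infty$ in $\pitwo$ (with aperture constant $\|b\|/{\rm min}(b_1,b_2)$), so substituting $z=wb$ into the defining expansion of $\ln$ yields
\[
g(w) \= \sum_{l=1}^{2N-1} \frac{\sigma_l(b)}{w^l} + o(|w|^{-(2N-1)}), \qquad \sigma_l(b) \ := \ \sum_{|n|=l}\frac{\rho_n}{b^n},
\]
with $\sigma_l(b) \in \R$. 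Thus $g$ is a one-variable Pick function with $g(\infty)=0$ and real residues through order $2N-1$, to which Theorem \ref{thma3} directly applies.

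I would then transport the Nevanlinna representation (\ref{eq4.20}) to one variable. Since $b_Y = b_1Y + b_2(1-Y)$ is bounded below by ${\rm min}(b_1,b_2)\,I$, the operator $b_Y^{-1/2}$ is bounded, positive, and invertible. Put
\[
\tilde A \ := \ b_Y^{-1/2} A b_Y^{-1/2} \quad \text{on} \quad b_Y^{1/2}\dom{A}, \qquad \tilde\alpha \ := \ b_Y^{-1/2}\alpha.
\]
The algebraic identity $A-wb_Y = b_Y^{1/2}(\tilde A - w)b_Y^{1/2}$, together with the bounded invertibility of $(A-wb_Y)$ for every $w\in\C\setminus\R$ (which follows from $\Im\ip{(A-wb_Y)f}{f} = -\Im(w)\ip{b_Y f}{f}$ being of absolute value at least $|\Im(w)|\,{\rm min}(b_1,b_2)\|f\|^2$), gives that $(\tilde A - w)^{-1} = b_Y^{1/2}(A-wb_Y)^{-1}b_Y^{1/2}$ is everywhere defined and bounded on $\h$. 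Combined with the manifest symmetry of $\tilde A$ and the standard resolvent-set criterion for self-adjointness, this yields that $\tilde A$ is self-adjoint. Substituting $z=wb$ in (\ref{eq4.20}) then gives $g(w) = \ip{(\tilde A - w)^{-1}\tilde\alpha}{\tilde\alpha}$.

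Applying Theorem \ref{thma3} to $g$ forces $g$ to be the Cauchy transform of a positive measure $\mu_b$ on $\R$ whose moments through order $2N-2$ are finite and satisfy $\int t^k\,d\mu_b = -\sigma_{k+1}(b)$ for $0\le k\le 2N-2$. By uniqueness of the Cauchy representation of a Pick function, $\mu_b$ is the scalar spectral measure of $\tilde A$ at $\tilde\alpha$, so finiteness of the $(2N-2)$-th moment is equivalent to $\tilde\alpha \in \dom{\tilde A^{N-1}}$. An elementary induction based on $\tilde A(b_Y^{-1/2}v) = b_Y^{-1/2}(Ab_Y^{-1})v$ yields
\[
\tilde A^k\tilde\alpha \ = \ b_Y^{-1/2}(Ab_Y^{-1})^k\alpha,
\]
which gives $\alpha \in \dom{(Ab_Y^{-1})^k}$ for $1\le k\le N-1$, i.e.\ the required real vector $(Y,\alpha)$-moments to order $N-1$. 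The same identities together with $R_l(b) = b_Y^{-1}(Ab_Y^{-1})^{l-1}$ reduce Definition \ref{def2.2} to
\[
r_l(b) \= \ip{\tilde A^{l-1}\tilde\alpha}{\tilde\alpha} \= \int t^{l-1}\,d\mu_b(t) \= -\sigma_l(b) \= -\sum_{|n|=l}\frac{\rho_n}{b^n},
\]
which is manifestly a homogeneous polynomial of degree $l$ in $1/b$ and is exactly the desired formula (\ref{eq4.30}). The main subtle point is self-adjointness of $\tilde A$; this ultimately follows from the bounded invertibility of $(A-wb_Y)$ that is already built into the Nevanlinna representation of $h$ together with the injectivity estimate above. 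Everything else is either routine operator algebra or a direct appeal to the one-variable result.
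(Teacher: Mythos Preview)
Your proof is correct and, in outline, more economical than the paper's. Both arguments hinge on the same substitution: the paper introduces $X_b = b_Y^{-1/2} A b_Y^{-1/2}$ and $\gamma_b = b_Y^{-1/2}\alpha$, which are exactly your $\tilde A$ and $\tilde\alpha$. The difference is in what happens next. The paper proceeds by an explicit induction on $N$: it restricts to the ray $z = isb$, separates real and imaginary parts of the resolvent expansion, and uses a telescoping identity for
\[
\Big\langle \frac{s^{2N+2}}{X_b^2 + s^2}\,\gamma_b,\gamma_b\Big\rangle + \sum_{k=1}^{N}(-1)^k s^{2(N-k+1)}\langle X_b^{k-1}\gamma_b, X_b^{k-1}\gamma_b\rangle
\]
together with dominated convergence to extract the next vector moment and the next two scalar moments. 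In effect, the paper reproves the one-variable Hamburger theorem inside the two-variable framework.

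You instead recognise that $g(w) = h(wb) = \langle (\tilde A - w)^{-1}\tilde\alpha,\tilde\alpha\rangle$ is already in the form to which Theorem~\ref{thma3} applies, and invoke that theorem directly. This is cleaner: it makes transparent that the two-variable content of Theorem~\ref{thm4.1} is entirely contained in the substitution $A \mapsto b_Y^{-1/2}Ab_Y^{-1/2}$, with everything else reducing to the classical result. What the paper's longer argument buys is self-containment (no appeal to Theorem~\ref{thma3}) and a fully explicit spectral-theoretic derivation; what your approach buys is brevity and a clear logical dependence. Your handling of the self-adjointness of $\tilde A$ via the resolvent-set criterion is also more careful than the paper's, which simply asserts that $X_b$ is self-adjoint.
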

\begin{proof}
We proceed by induction.  Let $N=1$ and assume that $h \in \ln$ has a Nevanlinna representation as in \ref{eq4.20}. As $N=1$, the assertion that $A$ have real vector $(Y,\alpha)$-moments to order $N-1$ is vacuous. To see that $A$ has homogenous scalar $(Y,\alpha)$-moments to order $2N-1$, first note that since $\infty$ is a $\cpoint{1}$ for $h$ with real residues, we have that there exist $\rho_{(1,0)},\rho_{(0,1)} \in \rtwo$ such that
\be\label{eq4.40}
<{(A-z_Y)}^{-1}\alpha,\alpha> = h(z)
=\frac{\rho_{(1,0)}}{z_1} + \frac{\rho_{(0,1)}}{z_2} + o({\norm{z}}^{-1}),
\ee
non-tangentially at $\infty$. Fixing $b \in \rtwoplus$ and setting $z = isb$ in
\ref{eq4.40} gives that
\be\label{eq4.41}
is<{(A-isb_Y)}^{-1}\alpha,\alpha>\ \rightarrow\ \frac{\rho_{(1,0)}}{b_1} + \frac{\rho_{(0,1)}}{b_2}
\ee
as $s \to \infty$ in $\rplus$. Noting that for $b \in \rtwoplus$, $b_Y$ is strictly positive definite and hence, invertible, we define a self-adjoint operator, $X_b$, by the formula,
\be\label{eq4.42}
X_b = b_Y^{-\frac{1}{2}}A \by-half.
\ee
Noting that
\begin{align}
is<{(A-isb_Y)}^{-1}\alpha,\alpha> &= is<{(\byhalf (X_b - is) \byhalf)}^{-1} \alpha,\alpha> \notag \\
&= is<\by-half {(X_b - is)}^{-1} \by-half \alpha,\alpha> \notag\\
&= is<{(X_b - is)}^{-1} \by-half \alpha,\by-half \alpha> \notag\\
&= is<\frac{X_b + is}{X_b^2 + s^2} \by-half \alpha,\by-half \alpha> \notag\\
&= <\frac{-s^2 + isX_b}{X_b^2 + s^2} \by-half \alpha,\by-half \alpha>, \notag\\ \notag
\end{align}
we see upon taking real parts in \ref{eq4.41} that
\be\label{eq4.44}
-<\frac{s^2}{X_b^2 + s^2} \by-half \alpha,\by-half \alpha>\ \rightarrow\ \frac{\rho_{(1,0)}}{b_1} + \frac{\rho_{(0,1)}}{b_2}
\ee
as $s \to \infty$ in $\rplus$. Now, the Lesbesgue Dominated Convergence Theorem guarantees that
$$\frac{s^2}{X_b^2 + s^2} \by-half \alpha \to \by-half \alpha$$
as  $s \to \infty$ in $\rplus$. Hence,
\be\label{eq4.45}
-<\byinv \alpha,\alpha> = -<\by-half \alpha,\by-half \alpha> = \frac{\rho_{(1,0)}}{b_1} + \frac{\rho_{(0,1)}}{b_2}
\ee
As \ref{eq4.45} holds for all $b \in \rtwoplus$, we conclude that $A$ has homogenous scalar $(Y,\alpha)$-moments to order 1 as was to be shown. Also note that \ref{eq4.45} implies that \ref{eq4.30} holds.

We now turn to the inductive step of the proof. Accordingly, assume that
\begin{align}
&A \text{ has real vector } (Y,\alpha)\text{-moments to order  } N-1,  \label{eq4.46}\\
&A \text{ has homogenous scalar } (Y,\alpha)\text{-moments to order } 2N-1, \text{ and} \label{eq4.47}\\
&1 \le l \le 2N-1, b \in \rtwoplus \implies \sum_{|n|=l} \frac{\rho_n}{b^n}=-r_l(b)\label{eq4.48}
\end{align}
whenever $h \in \ln$ and has a representation as in \ref{eq4.20}. Fix $h$ with a representation as in \ref{eq4.20} and assume that $h \in \lnplus$. We need to show that
\ref{eq4.46}, \ref{eq4.47}, and \ref{eq4.48} hold with $N$ replaced with $N+1$. However, as $h \in \lnplus \subset \ln$, the inductive hypothesis implies that \ref{eq4.46}, \ref{eq4.47}, and \ref{eq4.48} hold for $N$. Therefore, the induction will be complete if we can show the following three conditions:
\be\label{eq4.49}
\alpha \in \dom{{(A\byinv)}^N},
\ee
\be\label{eq4.50}
r_{2N+1}(b)=-\sum_{|n|=2N+1} \frac{\rho_n}{b^n},\ \ b \in \rtwoplus,
\ee
and
\be\label{eq4.51}
r_{2N}(b)=-\sum_{|n|=2N} \frac{\rho_n}{b^n},\ \ b \in \rtwoplus.
\ee

First note that as $h \in \lnplus$ and \ref{eq4.20} holds, there exist scalar residues, $\rho_n,$ $n \in \isub{2N+1}$, such that
\be\label{eq4.52}
<{(A-z_Y)}^{-1}\alpha,\alpha> = \sum_{n \in I_{2N+1}} \frac{\rho_n}{z^n} + o({\norm{z}}^{-(2N+1)})
\ee
as $z \to \infty$ non-tangentially in $\pitwo$. Fixing $b \in \rtwoplus$ and setting $z = isb$ in \ref{eq4.52} we deduce that
\be\label{eq4.53}
<{(A-isb_Y)}^{-1}\alpha,\alpha> = \sum_{l = 1}^{2N+1} {(is)}^{-l} \sum_{|n|=l}\frac{\rho_n}{b^n} + o(s^{-(2N+1)}), \notag
\ee
as $s \to \infty$ in $\rplus$,
which, upon taking the imaginary parts, yields that
\be\label{eq4.54}
\impart{<{(A-isb_Y)}^{-1}\alpha,\alpha>} = \sum_{k = 1}^{N+1} \frac{{(-1)}^k}{s^{2k-1}} \sum_{|n|=2k-1}\frac{\rho_n}{b^n} + o(s^{-(2N+1)})
\ee
as $s \to \infty$ in $\rplus$. Finally, upon multiplying \ref{eq4.54} by the factor $s^{2N+1}$, we deduce the limit,
\be\label{eq4.55}
\lim_{s \to \infty}G_b(s)
= {(-1)}^{N+1}\sum_{|n|=2N+1}\frac{\rho_n}{b^n}
\ee
where for $s \in \rplus$ and $b \in \rtwoplus$, $G_b(s)$ is defined by
\be\label{eq4.56}
G_b(s) = s^{2N+1}\impart{<{(A-isb_Y)}^{-1}\alpha,\alpha>} - \sum_{k = 1}^{N} {(-1)}^k s^{2(N-k+1)} \sum_{|n|=2k-1}\frac{\rho_n}{b^n}.
\ee

We now compute $G_b(s)$ using the substitution \ref{eq4.42}. We set
\be\label{eq4.57}
\gamma_b = \by-half\alpha.
\ee
Note that \ref{eq4.46} implies that $\gb \in \dom{X_b^l}$ for $l=1, \ldots ,N-1$. Using \ref{eq4.48} and \ref{eq2.5} we see for $k=1,\ldots,N$, that
\begin{align*}
\sum_{|n|=2k-1} \frac{\rho_n}{b^n}&=-r_{2k-1}(b)\\
&=-<\byinv {(A\byinv)}^{k-1}\alpha,{(A\byinv)}^{k-1}\alpha>\\
&=-<{(X_b)}^{k-1}\gb,{(X_b)}^{k-1}\gb>.
\end{align*}
Also, just as in the calculation leading up to \ref{eq4.44} we compute that
$$<{(A-isb_Y)}^{-1}\alpha,\alpha> = <\frac{X_b + is}{X_b^2 + s^2} \by-half \alpha,\by-half \alpha>,$$
so that
\begin{align*}
\impart{<{(A-isb_Y)}^{-1}\alpha,\alpha>} &= <\frac{s}{X_b^2 + s^2} \by-half \alpha,\by-half \alpha>\\
&=<\frac{s}{X_b^2 + s^2} \gb,\gb>.
\end{align*}
Hence we have that
\be
G_b(s)=<\frac{s^{2N+2}}{X_b^2 + s^2} \gb,\gb>+\sum_{k = 1}^{N} {(-1)}^k s^{2(N-k+1)} <{(X_b)}^{k-1}\gb,{(X_b)}^{k-1}\gb>. \notag
\ee
We claim that the above sum telescopes. Indeed, using the fact that
\begin{align*}
&<{(X_b)}^{k-1}\gb,{(X_b)}^{k-1}\gb>\\
=&<(\frac{X_b^2}{X_b^2 + s^2}+\frac{s^2}{X_b^2 + s^2}) {(X_b)}^{k-1}\gb,{(X_b)}^{k-1}\gb>\\
=&<\frac{X_b^2}{X_b^2 + s^2}{(X_b)}^{k-1}\gb,{(X_b)}^{k-1}\gb>+<\frac{s^2}{X_b^2 + s^2} {(X_b)}^{k-1}\gb,{(X_b)}^{k-1}\gb>
\end{align*}
we compute that
\begin{align*}
G_b(s)&=\ <\frac{s^{2N+2}}{X_b^2 + s^2} \gb,\gb>+\sum_{k = 1}^{N} {(-1)}^k s^{2(N-k+1)} <{(X_b)}^{k-1}\gb,{(X_b)}^{k-1}\gb>\\
&=\ <\frac{s^{2N+2}}{X_b^2 + s^2} \gb,\gb>\\
&\qquad -s^{2N}(<\frac{X_b^2}{X_b^2 + s^2} \gb,\gb>+<\frac{s^2}{X_b^2 + s^2} \gb,\gb>)\\
&\qquad +s^{2N-2}(<\frac{X_b^2}{X_b^2 + s^2} X_b\gb,X_b\gb>+<\frac{s^2}{X_b^2 + s^2} X_b\gb,X_b\gb>)\\
&\vdots\\
&\qquad +{(-1)}^N s^2(<\frac{X_b^2}{X_b^2 + s^2}{(X_b)}^{N-1}\gb,{(X_b)}^{N-1}\gb>
\\
&\qquad \qquad \qquad +<\frac{s^2}{X_b^2 + s^2} {(X_b)}^{N-1}\gb,{(X_b)}^{N-1}\gb>)\\
&=\  {(-1)}^N s^2<\frac{X_b^2}{X_b^2 + s^2}{(X_b)}^{N-1}\gb,{(X_b)}^{N-1}\gb>.
\end{align*}
This last calculation makes sense since
$$\frac{X_b^2}{X_b^2 + s^2}$$
is a bounded operator and $\gb \in \dom{{X_b}^l}$ for $l=1,\ldots,N-1.$

Now recall \ref{eq4.55}. From the formula for $G_b(s)$ just derived, we see that
\be\label{eq4.60}
\lim_{s \to \infty} <\frac{s^2 X_b^2}{X_b^2 + s^2}{(X_b)}^{N-1}\gb,{(X_b)}^{N-1}\gb>
= -\sum_{|n|=2N+1}\frac{\rho_n}{b^n}.
\ee
As $X_b$ is self-adjoint and $\gb \in \dom{{X_b}^{N-1}}$, we can apply the spectral theorem to $X_b$ and thereby obtain the scalar spectral measure of $\gb$, $\mu$. Analyzing the very existence of the limit on the left side of \ref{eq4.60} in the space $L^2(\mu)$ yields via the Lesbesgue Dominated Convergence Theorem that
\be\label{eq4.62}
\gb \in \dom{{X_b}^{N}}.
\ee
Unraveling \ref{eq4.62} via \ref{eq4.42} and \ref{eq4.57} gives that,
$$\alpha \in \dom{(A\byinv)^N},$$
which is \ref{eq4.49}. Note also from \ref{eq4.60} we have that
$$<{(X_b)}^2{(X_b)}^{N-1}\gb,{(X_b)}^{N-1}\gb>
= -\sum_{|n|=2N+1}\frac{\rho_n}{b^n},$$
which unravels to
\be\label{eq4.63}
r_{2N+1}(b)=-\sum_{|n|=2N+1}\frac{\rho_n}{b^n}.
\ee
which is \ref{eq4.50}.

There remains to check \ref{eq4.51}. This is done by following the same line of reasoning that led from \ref{eq4.52} to \ref{eq4.60}. One starts with \ref{eq4.52} but with $2N+1$ replaced with $2N$:
\be\label{eq4.64}
<{(A-z_Y)}^{-1}\alpha,\alpha> = \sum_{n \in I_{2N}} \frac{\rho_n}{z^n} + o({\norm{z}}^{-(2N)})
\ee
Proceeding as before, for a fixed $b \in \rtwoplus$ and $s \in \rplus$ we set $z=isb$ in \ref{eq4.64}. However unlike before, where we took imaginary parts to obtain \ref{eq4.54}, we now take real parts. This results in
\be\label{eq4.65}
\repart{<{(A-isb_Y)}^{-1}\alpha,\alpha>} = \sum_{k = 1}^{N} \frac{{(-1)}^k}{s^{2k}} \sum_{|n|=2k}\frac{\rho_n}{b^n} + o(s^{-2N})
\ee
as $s \to \infty$ in $\rplus$. Finally, upon multiplying \ref{eq4.64} by the factor $s^{2N}$ (rather than $s^{2N+1}$ as before), we deduce the limit,
\be\label{eq4.66}
\lim_{s \to \infty}F_b(s)
= {(-1)}^{N}\sum_{|n|=2N}\frac{\rho_n}{b^n}
\ee
where for $s \in \rplus$ and $b \in \rtwoplus$, $F_b(s)$ is defined by
\be\label{eq4.67}
F_b(s) = s^{2N}\repart{<{(A-isb_Y)}^{-1}\alpha,\alpha>} - \sum_{k = 1}^{N-1} {(-1)}^k s^{2(N-k)} \sum_{|n|=2k}\frac{\rho_n}{b^n}.
\ee
Carrying out the telescoping argument, one computes that
$$F_b(s)={(-1)}^{N-1} s^2
<\frac{X_b^2}{X_b^2 + s^2}{(X_b)}^{N-2}\gb,{(X_b)}^{N-1}\gb>,$$
which implies via \ref{eq4.66} the existence of the limit
\be\label{eq4.68}
\lim_{s \to \infty} <\frac{s^2 X_b^2}{X_b^2 + s^2}{(X_b)}^{N-2}\gb,{(X_b)}^{N-1}\gb>
= -\sum_{|n|=2N}\frac{\rho_n}{b^n}.
\ee
As \ref{eq4.62} holds, \ref{eq4.68} implies that
$$<{(X_b)}^2{(X_b)}^{N-2}\gb,{(X_b)}^{N-1}\gb>
= -\sum_{|n|=2N}\frac{\rho_n}{b^n}.$$
As this last equation unravels via \ref{eq4.42} and \ref{eq4.57} to
$$r_{2N}(b)=-\sum_{|n|=2N}\frac{\rho_n}{b^n},$$
the proof that \ref{eq4.51} holds is complete.
\end{proof}
\section{Finite Hankel Pairs} %section 5%
\label{sec5}

In this section we give an alternate matrix theoretic treatment of HVMS's based on the fact that it is possible to cleanly characterize the Gram matrix formed from the moment vectors of an HVMS.

For $X$ a  set, we let  $\ltwoof{X}$ denote the Hilbert space of square summable complex valued functions on $X$. If $f \in \ltwoof{X}$, we let $\supp{f}$, \emph{the support of $f$}, denote the subset of $X$ defined by
$$\supp{f}=\set{x\in X}{f(x) \ne 0}.$$
%Note that if $X \subseteq Y$, then $\ltwoof{Y} \ominus \ltwoof{X}$ is naturally isomorphic to $\ltwoof{Y \setminus X}$ via the map $\iota$ defined by $\iota(f)=f|(Y \setminus X)$.

By a \emph{matrix on X} we mean a square array of scalars, doubly indexed by the elements of $X$. If $a=[a_{x,y}]$ is a matrix on $X$, then $a$ 
induces a densely defined linear operator, also denoted by $a$, on the finitely supported functions in 
$\ltwoof{X}$ by the formula
$$(af)(x)=\sum_{y \in \supp{f}}a_{x,y}f(y).$$
If $a=[a_{x,y}]$ is a matrix on $X$, then we say that \emph{a is symmetric} if
$$a_{x,y}=\overline{a}_{y,x} \text{ for all } x,y \in X,$$
and we say that \emph{a is positive  semi-definite} if for each (finite)
choice of elements, $x_1,x_2,\ldots,x_l \in X$, and each choice of scalars, $c_1,c_2,\ldots,c_l \in \mathbb{C}$,
$$\sum_{i,j=1}^l a_{x_i,x_j}c_j\overline{c}_i \ge 0.$$

In this section we shall be exclusively interested in the case where $X=I_N$, for $N$ a positive integer. Note that naturally, if $M \le N$, then $\ltwoof{I_{M}} \subseteq \ltwoof{I_N}$, and in addition, that there is a pair of shift operators, $S_1,S_2:\ltwoof{I_{N-1}} \to \ltwoof{I_{N}}$ defined by
\begin{eqnarray*}
(S_1\, f)(n) &\ =\ 
&\left\{
\begin{array}{ll}
 f(n-e_1) &\qquad n - e_1 \in I_{N-1} \\
0 & \qquad \text{else}
\end{array}
\right. \\
(S_2 \, f)(n) &\ =\ 
&\left\{
\begin{array}{ll}
 f(n-e_2) &\qquad n - e_2 \in I_{N-1} \\
0 & \qquad \text{else}
\end{array}
\right.
\end{eqnarray*}
% \text{ and } (S_2f)(n)=f(n-e_2) \text{ for } n \in I_N\setminus I_1.$$
If $(\alphan,A,Y)$ is an HVMS of size $N$, then we may define a pair of matrices $a=(a^1,a^2)$ on $I_N$ by
\be\label{eq5.010}
a_{m,n}^1=<Y\alpha_n,\alpha_m>\text{ and }a_{m,n}^2=<(1-Y)\alpha_n,\alpha_m> \text{ for } m,n \in \isubn.
\ee
\begin{defin}\label{def5.1}
We say that $a=(a^1,a^2)$ is a finite Hankel pair of size $N$ if $a^1$ and $a^2$ are matrices on $I_N$ and there exists an HVMS of size $N$ such that (\ref{eq5.010}) holds.
\end{defin}
In Theorem~\ref{thm5.1}, we give a characterization of when a pair of matrices is
a finite Hankel pair. To see how this is a two variable version of 
Theorem~\ref{thma4}, let us restate that theorem more abstractly.
Let $S : \ltwoof{ \{ 0, 1, \dots, N-2\}} \to \ltwoof{ \{ 0, 1, \dots, N-1 \} }$ be the shift
defined by
$Sf (j) = f(j-1), j > 0$, and $Sf(0) = 0$.
\begin{thm}\label{thm5.0}
Let $H$ be an $N$-by-$N$ matrix. There is a self-adjoint operator $A$ and a vector
$\alpha$ with $\alpha \in {\rm Dom }(A^k)$ for $ 1 \leq k \leq N-1$ such that
\[
H_{ij} \ = \ < A^j \alpha, A^i \alpha > \qquad 0 \leq i,j \leq N-1
\]
if and only if the following three conditions obtain.
\be\label{eq5.0020}
H  \text{ is positive semi-definite.}
\ee
\be\label{eq5.0030}
H_{i+1,j} \ = \ H_{i,j+1} \qquad 0 \leq i, j \leq N- 2 .
\ee
\be\label{eq5.050}
\supp{f} \in \{ 0, \dots, N-2 \} \text{ and } Hf=0 \Rightarrow HS f=0.
\ee
\end{thm}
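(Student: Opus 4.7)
The proof proceeds in two directions. For necessity, given the representation $H_{ij}=\langle A^{j}\alpha,A^{i}\alpha\rangle$, positive semi-definiteness is immediate since $H$ is the Gram matrix of $\{\alpha,A\alpha,\ldots,A^{N-1}\alpha\}$. The Hankel condition \eqref{eq5.0030} is obtained by moving one factor of $A$ across the inner product: for $i,j\le N-2$ both $A^{i+1}\alpha$ and $A^{j+1}\alpha$ are defined and in $\dom{A}$, so self-adjointness yields $H_{i+1,j}=\langle A^{j}\alpha,A^{i+1}\alpha\rangle=\langle A^{j+1}\alpha,A^{i}\alpha\rangle=H_{i,j+1}$. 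For condition \eqref{eq5.050}, if $f$ is supported in $\{0,\ldots,N-2\}$ and $Hf=0$, set $v=\sum_{j}f(j)A^{j}\alpha$. Then $\|v\|^{2}=\langle Hf,f\rangle_{\ell^{2}}=0$, so $v=0$. Because the support restriction forces each summand into $\dom{A}$, one may apply $A$ to obtain $Av=\sum_{k}(Sf)(k)A^{k}\alpha=0$, and pairing with $A^{i}\alpha$ gives $(HSf)_{i}=\langle Av,A^{i}\alpha\rangle=0$.

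For sufficiency, the plan is a GNS-style construction. Equip $\mathbb{C}^{N}$ with the sesquilinear form $\langle f,g\rangle_{H}=\langle Hf,g\rangle_{\ell^{2}}$, which is positive semi-definite by \eqref{eq5.0020}, and pass to the Hilbert space $\mathcal{H}_{0}=\mathbb{C}^{N}/\mathcal{N}$, where $\mathcal{N}=\{f:Hf=0\}$ coincides with the form-kernel because $H$ is positive semi-definite. Writing $[e_{j}]$ for the class of the $j$-th standard basis vector, one has $\langle [e_{j}],[e_{i}]\rangle=H_{ij}$ by construction. Define a candidate operator $A_{0}$ on $\mathcal{D}_{0}=\mathrm{span}\{[e_{0}],\ldots,[e_{N-2}]\}$ by the shift rule $A_{0}[e_{j}]=[e_{j+1}]$. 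This is precisely where condition \eqref{eq5.050} is required: if $\sum_{j=0}^{N-2}c_{j}[e_{j}]=0$, then $f=(c_{0},\ldots,c_{N-2},0)$ satisfies $Hf=0$, so \eqref{eq5.050} gives $HSf=0$, i.e.\ $\sum_{j}c_{j}[e_{j+1}]=0$, which is $A_{0}$ applied to the original combination. The Hankel condition \eqref{eq5.0030} then translates immediately into $\langle A_{0}[e_{j}],[e_{i}]\rangle=\langle [e_{j}],A_{0}[e_{i}]\rangle$, so $A_{0}$ is symmetric.

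Finally, I would invoke the general fact recorded in Section~\ref{secb}, that every symmetric operator admits a self-adjoint extension on a possibly larger Hilbert space, to obtain a self-adjoint $A$ on $\mathcal{H}\supseteq\mathcal{H}_{0}$ extending $A_{0}$. Setting $\alpha=[e_{0}]$, an induction on $j$ using $[e_{j-1}]\in\mathcal{D}_{0}\subseteq\dom{A}$ for $j\le N-1$ shows that $A^{j}\alpha=[e_{j}]$, which simultaneously establishes $\alpha\in\dom{A^{k}}$ for $1\le k\le N-1$ and $\langle A^{j}\alpha,A^{i}\alpha\rangle=\langle [e_{j}],[e_{i}]\rangle=H_{ij}$. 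The only delicate point is the well-definedness of $A_{0}$, but hypothesis \eqref{eq5.050} is tailored to supply exactly this, so no substantial obstacle remains; the Hankel condition handles symmetry and the self-adjoint extension step is an off-the-shelf dilation.
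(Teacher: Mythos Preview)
Your proof is correct. The paper does not actually supply its own proof of Theorem~\ref{thm5.0}; it is presented as a restatement of the classical Hamburger theorem (Theorem~\ref{thma4}), with proofs deferred to the references. However, your argument is precisely the one-variable specialization of the proof the paper gives for the two-variable analogue, Theorem~\ref{thm5.1}: necessity via the Gram-matrix interpretation, self-adjointness for the Hankel identity, and the kernel condition from well-definedness of the shift; sufficiency by realizing vectors with the prescribed Gram matrix, defining $A$ as the shift on the span of all but the top vector, invoking the kernel hypothesis for well-definedness, the Hankel condition for symmetry, and then extending to a self-adjoint operator. So your approach coincides with the method the authors use in the two-variable setting.
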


Here is our two variable version of Hamburger's Theorem~\ref{thma4}. 
%Observe that 
%conditions (\ref{eq5.030}) and  (\ref{eq5.040}) play the r\^ole of asserting that the matrix
%is constant on cross-diagonals, condition  (\ref{eq5.020}) corresponds to the assertion that $H$
%is poisitive semi-definite, and  (\ref{eq5.050}) is the extra condition needed if the matrix is 
%not positive definite.

\begin{thm}\label{thm5.1}
Let $a$ be a pair of matrices on $\isubn$. Then $a$ is a finite Hankel pair of size $N$ if and only if the following four conditions obtain.
\be\label{eq5.020}
a^1 \text{ and } a^2 \text{ are positive semi-definite.}
\ee
\be\label{eq5.030}
a^1_{m+e_1,n}+a^2_{m+e_2,n}=a^1_{m,n+e_1}+a^2_{m,n+e_2} \ \text{ whenever }\ m,n \in \isub{N-1}.
\ee
\be\label{eq5.040}
a^1_{(0,l),(0,l)} = a^2_{(l,0),(l,0)}=0\ \text{ for }\ l=1,\ldots,N.
\ee
\be\label{eq5.050}
\supp{f} \in \isub{N-1}\text{ and } (a^1+a^2)f=0 \Rightarrow (a^1S_1+a^2S_2)f=0.
\ee
\end{thm}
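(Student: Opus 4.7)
\emph{Necessity} is a routine unwinding of the HVMS axioms. Given an HVMS $(\{\alpha_n\}_{n\in I_N},Y,A)$ witnessing $a$, condition (\ref{eq5.020}) follows from positivity of $Y$ and $1-Y$; (\ref{eq5.040}) is exactly (\ref{eq2.1}); and (\ref{eq5.030}) is the identity $\langle A\alpha_n,\alpha_m\rangle=\langle \alpha_n,A\alpha_m\rangle$ for $m,n\in I_{N-1}$, after expanding both sides via (\ref{eq2.2}). For (\ref{eq5.050}), if $f$ is supported in $I_{N-1}$ and $(a^1+a^2)f=0$, put $v:=\sum_n f(n)\alpha_n$; then $\langle v,\alpha_m\rangle=((a^1+a^2)f)(m)=0$ for every $m\in I_N$, so $\|v\|^2=\sum_m\overline{f(m)}\langle v,\alpha_m\rangle=0$, giving $v=0$ and hence $Av=0$. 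A direct computation identifies $\langle Av,\alpha_m\rangle$ with $((a^1S_1+a^2S_2)f)(m)$, which therefore vanishes.

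\emph{Sufficiency} is proved by reversing the construction. Set $H:=a^1+a^2$, a positive semi-definite operator on $\ltwoof{I_N}$ by (\ref{eq5.020}). Let $\H$ be the Hilbert space obtained by completing $\ltwoof{I_N}/\ker H$ under the inner product $\langle[f],[g]\rangle_\H:=\langle Hf,g\rangle_{\ltwoof{I_N}}$, and let $\alpha_n\in\H$ denote the image of the standard basis vector $e_n$; then $\langle \alpha_n,\alpha_m\rangle_\H=H_{m,n}$. Because $0\le a^1\le H$, the sesquilinear form $(f,g)\mapsto\langle a^1 f,g\rangle_{\ltwoof{I_N}}$ annihilates $\ker H$ and is bounded by $\|[f]\|_\H\|[g]\|_\H$; hence, by the Riesz representation theorem, it descends to a unique positive contraction $Y$ on $\H$ satisfying $\langle Y\alpha_n,\alpha_m\rangle=a^1_{m,n}$, and automatically $\langle(1-Y)\alpha_n,\alpha_m\rangle=a^2_{m,n}$. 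Condition (\ref{eq5.040}) forces $\langle Y\alpha_{(0,l)},\alpha_{(0,l)}\rangle=0$ and $\langle(1-Y)\alpha_{(l,0)},\alpha_{(l,0)}\rangle=0$, from which positivity of $Y$ and $1-Y$ yields $Y\alpha_{(0,l)}=0=(1-Y)\alpha_{(l,0)}$, i.e.\ (\ref{eq2.1}).

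The crux is to define the Hankel shift $A$ on $\mathcal{D}:=\mathrm{span}\{\alpha_n : n\in I_{N-1}\}$ by linear extension of
\[
A\alpha_n\;:=\;Y\alpha_{n+e_1}+(1-Y)\alpha_{n+e_2},\qquad n\in I_{N-1}.
\]
Well-definedness is exactly what (\ref{eq5.050}) buys: if $f$ supported in $I_{N-1}$ satisfies $\sum_n f(n)\alpha_n=0$ in $\H$, then $(a^1+a^2)f=0$ in $\ltwoof{I_N}$, so by (\ref{eq5.050}) one has $(a^1S_1+a^2S_2)f=0$, and expanding inner products identifies this with the assertion that $\sum_n f(n)(Y\alpha_{n+e_1}+(1-Y)\alpha_{n+e_2})$ is orthogonal to every $\alpha_m$, $m\in I_N$, and hence vanishes. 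Symmetry of $A$ on $\mathcal{D}$ is (\ref{eq5.030}) in disguise, as
\[
\langle A\alpha_n,\alpha_m\rangle=a^1_{m,n+e_1}+a^2_{m,n+e_2},\qquad \langle \alpha_n,A\alpha_m\rangle=a^1_{m+e_1,n}+a^2_{m+e_2,n}
\]
(the second using self-adjointness of $a^1,a^2$, which follows from (\ref{eq5.020})). Passing to a self-adjoint extension on a possibly larger Hilbert space, as allowed by the remark following Definition~\ref{defhvms}, produces the required HVMS.

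The main obstacle throughout is not any single technical estimate but rather the bookkeeping that identifies the algebraic role of each hypothesis: (\ref{eq5.020}) supplies $\H$ and $Y$; (\ref{eq5.040}) is equivalent to the vanishing conditions in (\ref{eq2.1}); (\ref{eq5.050}) is precisely the descent condition ensuring that the formal prescription for $A$ is well defined on the quotient; and (\ref{eq5.030}) is precisely its symmetry. Once these correspondences are recognized, the construction is essentially forced.
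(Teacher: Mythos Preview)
Your proof is correct and follows essentially the same route as the paper's: necessity by unwinding the HVMS axioms, sufficiency via a GNS-type construction of $\H$ and $\{\alpha_n\}$ from the Gram matrix $a^1+a^2$, with $Y$ coming from $0\le a^1\le a^1+a^2$, well-definedness of $A$ from (\ref{eq5.050}), and symmetry from (\ref{eq5.030}). Your necessity argument for (\ref{eq5.050}) is in fact slightly more careful than the paper's---you observe that $(a^1+a^2)f=0$ forces $\|v\|^2=\sum_m\overline{f(m)}\langle v,\alpha_m\rangle=0$, hence $v=0$ and $Av=0$, whereas the paper writes $\langle Av,\alpha_m\rangle=0$ without this intermediate step; and your explicit quotient construction makes clear that the $\alpha_m$ span $\H$, which is what justifies the passage from ``orthogonal to every $\alpha_m$'' to ``equals zero'' in the well-definedness step.
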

\begin{proof}
(Necessity) Assume that $( \alphan_{n \in I_N} , Y , A)$ is an HVMS and  (\ref{eq5.010})
holds. Then  (\ref{eq5.020}) holds because $Y$ and $1-Y$ are positive operators.
 (\ref{eq5.030}) holds because the left-hand side
is
\[
\langle Y \alpha_n, \alpha_{m+e_1} \rangle \ + \
\langle (1-Y) \alpha_n, \alpha_{m+e_2} \rangle \ = \ \langle \alpha_n , A \alpha_m \rangle,
\]
by (\ref{eq2.2}). But the right-hand side \blue
of (\ref{eq5.030}) by a similar calculation is $\langle A \alpha_n, \alpha_m \rangle$,
which is equal to $\langle \alpha_n, A \alpha_m \rangle$
\black
because $A$ is self-adjoint and $\alpha_m, \alpha_n$ are in its domain
for $m,n \in I_{N-1}$.
Condition (\ref{eq5.040}) follows from \blue  (\ref{eq2.1}). \black

Finally, if $\supp{f} \in \isub{N-1}\text{ and } (a^1+a^2)f=0$, this says that
\[
\langle \sum_{n \in I_{N-1}}  f(n)  \alpha_n , \alpha_m \rangle \ = \ 0 
\]
for all $m \in I_N$.
But
\begin{eqnarray*}
 (a^1S_1+a^2S_2)f (m)  & \ = \ & 
\sum_{n  \in I_{N-1}} f(n)  \langle Y \alpha_{n+ e_1} + (1-Y) \alpha_{n+e_2} , \alpha_m \rangle \\
&=& \langle A ( \sum_{n \in I_{N-1}}  f(n)  \alpha_n) , \alpha_m \rangle \\
&=& 0,
\end{eqnarray*}
\blue so (\ref{eq5.050}) holds.
\black

(Sufficiency).
% First, take $N \geq 2$, and
 Assume (\ref{eq5.020}) --- (\ref{eq5.050}) hold. Choose vectors $\alpha_n$ in a 
Hilbert space $\H$ so that their Grammian equals the matrix $a^1 + a^2$:
\[
\langle \alpha_n, \alpha_m \rangle \ = \ a^1_{m,n} + a^2_{m,n} .
\]
%By Douglas's lemma \cite{dou1}, 
Since $a^1 \leq \langle  \alpha_n, \alpha_m \rangle $,
there is a positive operator $Y$ satisfying (\ref{eq5.010}).
\blue
Equation (\ref{eq2.1}) follows from  (\ref{eq5.040}).

If $N = 1$, we can define $A$ arbitrarily,  {\em e.g.} by $A=0$.

If $N \geq 2$, 
\black
we define $A$ on the span of $\{ \alpha_n \}_{n \in I_{N-1}}$ by
\[
A \alpha_n \ = \
Y \alpha_{n+e_1} + (1-Y) \alpha_{n+e_2} .
\]
To check that this is a well-defined linear operator, we need to know that if
\[
\sum_{n \in I_{N-1}} c_n \alpha_n \ = \ 0 ,\]
then
\[
\sum_{n \in I_{N-1}} c_n ( Y \alpha_{n+e_1} + (1-Y) \alpha_{n+e_2}) \ = \ 0 .
\]
This follows from (\ref{eq5.050}). 
It follows from  (\ref{eq5.030}) that $A$ is symmetric.
%Finally, (\ref{eq2.2}) follows from  (\ref{eq5.040}).
%
%If $N=1$, then (\ref{eq2.1}) holds as before, and we can define $A$ essentially
%arbitrarily, {\em e.g.} by $A=0$.
\end{proof}

\section{Infinite sequences}
\label{secinf}

As in one variable, passage from the finite to the infinite case is 
straightforward and leads to some simplifications. Let $\I$ denote the set
of pairs of non-negative integers, excluding $(0,0)$.

\begin{defin}
\label{defihvms}
An infinite Hankel vector moment sequence is a 3-tuple, $(\{ \alpha_n \}_{n \in \I},Y,A)$ where:
% For some Hilbert space $\mathcal{H}$, 
$\{ \alpha_n \}_{n \in \I}$ is a sequence of vectors in  some Hilbert space $\H$;
% indexed by $I_N$;
$Y$ is a positive contraction acting on $\H$,  satisfying for each $l \geq 1$
\[
Y\alpha_{(0,l)}=0 = (1-Y)\alpha_{(l,0)}=0;
\]
$A$ is a densely  defined self-adjoint operator on $\H$ with the property that
\[
\set{\alpha_n}{ n \in \I} \subset \dom{A};
\]
 for each $n \in \I$,
\[
A\alpha_n=Y\alpha_{n+e_1}+(1-Y)\alpha_{n+e_2}.
\]
\end{defin}
Theorem~\ref{thmaf} becomes a description of functions in ${\cal L}^\infty$.
\bt
\label{thmfa}
 A Pick function $h$ of two variables
has an asymptotic expansion
\[
h(z) \ = \ \sum_{n \in \I} \frac{\rho_n}{z^n} \ 
\]
as $z \stackrel{nt}{\to}\infty$, for some real numbers $\rho_n$,
 if and only if it has a representation
as in (\ref{eqa1.2.1}) and for every such representation there is an infinite HVMS 
$(\{ \alpha_n \}_{n \in \I},Y,A)$
with $\alpha = \alpha_{(1,0)} + \alpha_{(0,1)}$.
Moreover, $\{ \rho_n \}$ are given by
\[
\sum_{|n|=l} \frac{\rho_n}{b^n}=-r_l(b)
\]
whenever $l \geq 1$ and $b \in \rtwoplus$.
\et
Sufficiency of the condition follows from Theorem~\ref{prop3.10}; necessity
follows from the constructive proof of Theorem~\ref{thm4.1}.

We define an infinite Hankel pair by
\begin{defin}\label{defin2}
We say that $a=(a^1,a^2)$ is a infinite Hankel pair if $a^1$ and $a^2$ are matrices 
on $\I$ and there exists an  infinite HVMS  such that
\be
\label{eqf3}
a_{m,n}^1=\langle Y\alpha_n,\alpha_m\rangle \text{ and }a_{m,n}^2=\langle (1-Y)\alpha_n,\alpha_m\rangle  \text{ for } m,n \in \I.
\ee
\end{defin}
If (\ref{eq5.030}) holds for all $N$, then (\ref{eq5.050}) holds automatically.
So the infinite Hamburger theorem becomes
\begin{thm}\label{thmf2}
Let $a$ be a pair of matrices on $\I$. Then $a$ is an infinite Hankel pair if and only if the following three conditions obtain.
\[
a^1 \text{ and } a^2 \text{ are positive semi-definite.}
\]
\[
a^1_{m+e_1,n}+a^2_{m+e_2,n}=a^1_{m,n+e_1}+a^2_{m,n+e_2} \ \text{ whenever }\ m,n \in \I.
\]
\[
a^1_{(0,l),(0,l)} = a^2_{(l,0),(l,0)}=0\ \text{ for }\ l \geq 1.
\]
\end{thm}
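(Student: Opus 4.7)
The plan is to follow the blueprint of Theorem~\ref{thm5.1} almost verbatim, with the key observation that extending the Hankel identity (\ref{eq5.030}) to \emph{all} of $\I$ absorbs the role of the kernel hypothesis (\ref{eq5.050}); this is precisely why only three conditions appear in the infinite statement.

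Necessity is essentially a rereading of the finite case. If $(\{\alpha_n\}_{n \in \I}, Y, A)$ is an infinite HVMS with Gram matrices as in (\ref{eqf3}), then $a^1$ and $a^2$ are positive semi-definite because $Y$ and $1-Y$ are positive; the diagonal vanishing condition follows from $Y\alpha_{(0,l)}=0=(1-Y)\alpha_{(l,0)}$; and (\ref{eq5.030}) unpacks the identity $\langle A\alpha_n,\alpha_m\rangle = \langle \alpha_n, A\alpha_m\rangle$ via the Hankel shift rule $A\alpha_k = Y\alpha_{k+e_1}+(1-Y)\alpha_{k+e_2}$.

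For sufficiency I will construct an HVMS by hand. Because $a^1+a^2 \geq 0$, I pick vectors $\{\alpha_n\}_{n \in \I}$ in a Hilbert space $\H$, taken to be the closure of their span, with $\langle \alpha_n,\alpha_m\rangle = a^1_{m,n}+a^2_{m,n}$. Since $0 \le a^1 \le a^1+a^2$, there is a unique positive contraction $Y$ on $\H$ with $\langle Y\alpha_n,\alpha_m\rangle = a^1_{m,n}$, whence $\langle (1-Y)\alpha_n,\alpha_m\rangle = a^2_{m,n}$. The vanishing diagonal hypothesis then gives $\|Y^{1/2}\alpha_{(0,l)}\|=0$ and $\|(1-Y)^{1/2}\alpha_{(l,0)}\|=0$, producing the HVMS requirement $Y\alpha_{(0,l)}=0=(1-Y)\alpha_{(l,0)}$ for every $l$. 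I then define $A$ on the dense subspace $\mathcal{D}=\mathrm{span}\{\alpha_n : n \in \I\}$ by $A\alpha_n = Y\alpha_{n+e_1}+(1-Y)\alpha_{n+e_2}$.

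The main obstacle is the well-definedness of $A$. If $\sum_n c_n \alpha_n = 0$ for some finite combination, I must verify $\sum_n c_n A\alpha_n = 0$; in the finite theorem this was guaranteed by the explicit hypothesis (\ref{eq5.050}), but here it falls out of (\ref{eq5.030}) alone. Indeed, for any $m \in \I$,
\[
\langle \textstyle\sum_n c_n A\alpha_n,\,\alpha_m\rangle
\ = \ \sum_n c_n \bigl(a^1_{m,n+e_1} + a^2_{m,n+e_2}\bigr)
\ = \ \sum_n c_n \bigl(a^1_{m+e_1,n}+a^2_{m+e_2,n}\bigr),
\]
and this last expression equals $\langle Y\sum_n c_n\alpha_n,\alpha_{m+e_1}\rangle + \langle (1-Y)\sum_n c_n\alpha_n,\alpha_{m+e_2}\rangle = 0$ by assumption. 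Since $\mathcal{D}$ is dense in $\H$, $\sum_n c_n A\alpha_n = 0$, so $A$ is a well-defined linear operator on $\mathcal{D}$. Symmetry of $A$ on $\mathcal{D}$ then follows from (\ref{eq5.030}) exactly as in Theorem~\ref{thm5.1}, and by the remark following Definition~\ref{defhvms} I can pass to a self-adjoint extension of $A$ on a possibly larger Hilbert space. The resulting triple is the required infinite HVMS realizing $(a^1,a^2)$.
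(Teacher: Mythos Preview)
Your proof is correct and matches the paper's intended argument: the paper itself does not give a detailed proof of Theorem~\ref{thmf2}, but simply remarks that once (\ref{eq5.030}) holds for all $m,n\in\I$, condition (\ref{eq5.050}) is automatic, so the construction in Theorem~\ref{thm5.1} goes through unchanged. Your well-definedness computation for $A$ is exactly the unpacking of that remark, and the rest of your construction (choice of $\H$ as the closed span of the $\alpha_n$, definition of $Y$, passage to a self-adjoint extension) follows the paper's Theorem~\ref{thm5.1} verbatim. One small point you leave implicit: when extending $A$ to a self-adjoint operator on a larger space $\H'\supseteq\H$, you should also extend $Y$ (say by $Y\oplus 0$ on $\H'\ominus\H$), which preserves all the required identities since every $\alpha_n$ lies in $\H$; this is harmless but worth stating.
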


%Kronecker's theorem asserts that a Hankel matrix is finite rank if and only if the entries
%are the residues of a rational function \cite{kro81} (see \cite[Thm. I.3.1]{pel02} for a modern treatment). One multi-variable generalization was proved by S. C. Power \cite{po82};
Here is a two variable version of Kronecker's theorem.

\bt
\label{thmf3}
Let $h \in {\mathcal L}^\infty$.
% be given by $h(z) = \langle (A -z_Y)^{-1} \alpha, \alpha \rangle$
Then there is an infinite HVMS 
$(\{ \alpha_n \}_{n \in \I},Y,A)$
with $\alpha = \alpha_{(1,0)} + \alpha_{(0,1)}$,   satisfying 
${\rm rank}(a^1 + a^2) < \infty $ and 
\be
\label{eqf5}
h(z) = \langle (A -z_Y)^{-1} \alpha, \alpha \rangle
\ee
 if and only if $h$ is a rational function.
 %of degree 
%$(d_1, d_2)$ with $d_1 + d_2 \leq d$.
%at most $d$ in each variable.
\et
\begin{proof}
If $h$ is rational of degree $(d_1,d_2)$, then by Theorem~\ref{thm2b2}
$h$ has a representation (\ref{eqf5}) on a Hilbert space  $\H$ of dimension at most $d = d_1 + d_2$.
Since $h  \in {\mathcal L}^\infty$, by Theorem~\ref{thmfa} there is an infinite
HVMS $(\{ \alpha_n \}_{n \in \I},Y,A)$ on $\H$.
So
\[
(a^1 + a^2)_{m,n} \ = \
\langle \alpha_n, \alpha_m \rangle_{\H}
\]
has rank at most $d$.

Conversely, suppose
 there is an infinite HVMS 
$(\{ \alpha_n \}_{n \in \I},Y,A)$
with $\alpha = \alpha_{(1,0)} + \alpha_{(0,1)}$,   satisfying 
 ${\rm rank}(a^1 + a^2) = d < \infty$ and (\ref{eqf5}).
Then one can choose vectors 
$\beta_n$ in a space $\H$ of dimension $d$ such that
\be
\label{eqf6}
\langle \beta_n , \beta_m \rangle_\H = \langle \alpha_n, \alpha_m \rangle
\ee
and so that the vectors $\{ \beta_n \}$ span $\H$.
Define a positive contraction $X$ on $\H$ by
\be
\label{eqf7}
\langle X \beta_n, \beta_m \rangle \ = \
\langle Y \alpha_n, \alpha_m \rangle .
\ee
Define $B$ by
\[
B \beta_n \= X \beta_{n+e_1} + (1-X) \beta_{n+e_2}.
\]
We claim that $B$ extends by linearity to a well-defined linear operator on $\H$.
Indeed, suppose $\sum c_n \beta_n = 0$. Then by (\ref{eqf6}),
\[
\langle \sum c_n \alpha_n , \sum c_m \alpha_m \rangle
\ = \
\langle \sum c_n \beta_n , \sum c_m \beta_m \rangle
\ = \ 0.
\]
So $\sum c_n \alpha_n = 0$, and therefore by (\ref{eqf7})
\begin{eqnarray*}
\langle \sum c_n [ X \beta_{n+e_1} + (1-X) \beta_{n+e_2} ], \beta_m \rangle
&\ = \ &
\langle \sum c_n [ Y \alpha_{n+e_1} + (1-Y) \alpha_{n+e_2} ], \alpha_m \rangle\\
&=& 
\langle A \sum c_n \alpha_n ,  \alpha_m \rangle \\
&=&
0.
\end{eqnarray*}
As $\{ \beta_m \}$ span $\H$, this means 
$
\sum c_n [ X \beta_{n+e_1} + (1-X) \beta_{n+e_2} ] = 0 $;
so $B$ is well-defined,
and hence
$(\{ \beta_n \}_{n \in \I},X,B)$ is an infinite HVMS on $\H$.
Let $\beta = \beta_{(1,0)} + \beta_{(0,1)}$.

By Remark~\ref{rem2.1}
%Proposition~\ref{prop2.1} and Theorem~\ref{thm2.1}, 
the scalar $(X,\beta)$
moments of $B$ agree with the scalar $(Y,\alpha)$ moments of $A$ to all orders.
Therefore by Theorem~\ref{thmfa}, the rational function $g$ of degree at most $d$ in each variable
given by
\be
\label{eqf9}
g(z) = \langle (B -z_X)^{-1} \beta, \beta \rangle_\H
\ee
has the same asymptotic expansion at $\infty$ as $h$.
By Lemma~\ref{lemf4}, we are done.
\end{proof}

\begin{lem}
\label{lemf4}
Let $g, h$ be in ${\mathcal L}^\infty$ and have the same asymptotic expansion at $\infty$.
Assume in addition that $g$ is rational.
Then $g$ and $h$ are equal.
\end{lem}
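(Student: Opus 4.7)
The plan is to reduce to the one-variable theory by slicing along real positive rays, apply classical Hamburger moment theory, and then globalize via the identity theorem.

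\emph{Step 1: real slicing.} For each fixed $\delta = (\delta_1,\delta_2) \in \rtwoplus$, define $\tilde h_\delta(t) := h(t\delta_1,t\delta_2)$ and $\tilde g_\delta(t) := g(t\delta_1,t\delta_2)$ for $t \in \Pi$. Because $\delta_j > 0$, the map $t \mapsto t\delta$ carries $\Pi$ holomorphically into $\pitwo$ and sends any one-variable non-tangential approach region at infinity into a two-variable non-tangential approach region at infinity, with constant depending only on $\max_j\{\delta_j\}/\min_j\{\delta_j\}$. Hence $\tilde h_\delta$ and $\tilde g_\delta$ are maps $\Pi \to \overline{\Pi}$; each is either a one-variable Pick function or a real constant, and the normalization $h(\infty)=g(\infty)=0$ forces the constant case to be identically zero. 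Substituting $z = t\delta$ into the ${\mathcal L}^\infty$ expansion for $h$ yields, for every $k$,
\[
\tilde h_\delta(t) \ = \ \sum_{j=1}^{k} \Bigl( \sum_{|n|=j} \frac{\rho_n}{\delta^n} \Bigr) t^{-j} + o\bigl(|t|^{-k}\bigr)
\]
non-tangentially as $t \to \infty$ in $\Pi$, and the identical expansion holds for $\tilde g_\delta$.

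\emph{Step 2: moment problem.} By Theorems~\ref{thma2} and~\ref{thma3}, each of $\tilde h_\delta$ and $\tilde g_\delta$ admits a Cauchy representation $\int (s-t)^{-1}\,d\sigma(s)$ by a finite positive Borel measure on $\mathbb{R}$ whose moments of every order are finite and are determined by the one-variable asymptotic coefficients via (\ref{eqa.1.4}). In particular the representing measures $\mu_\delta$ of $\tilde h_\delta$ and $\nu_\delta$ of $\tilde g_\delta$ agree in every moment. Since $g$ is rational, its slice $\tilde g_\delta$ is a rational Pick function vanishing at infinity, hence necessarily of the form $\sum_{j=1}^J c_j (s_j - t)^{-1}$ with $c_j > 0$ and $s_j \in \mathbb{R}$, so $\nu_\delta$ is finitely supported. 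A finitely (hence compactly) supported positive measure on $\mathbb{R}$ is determinate for the Hamburger moment problem (Carleman's criterion is trivially satisfied), so $\mu_\delta = \nu_\delta$ and therefore $\tilde h_\delta \equiv \tilde g_\delta$ on $\Pi$, for every $\delta \in \rtwoplus$.

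\emph{Step 3: globalize.} Fix any $t_0 \in \Pi$ and specialize to $\delta_2 = 1$. The equality $h(t_0 \delta_1, t_0) = g(t_0 \delta_1, t_0)$ valid for all $\delta_1 > 0$ says that the holomorphic function $w \mapsto h(w,t_0) - g(w,t_0)$ on $\Pi$ vanishes on the open ray $\{r t_0 : r>0\} \subset \Pi$, a set with an accumulation point inside $\Pi$. The one-variable identity theorem then forces this function to vanish on all of $\Pi$, and since $t_0 \in \Pi$ was arbitrary, $h \equiv g$ on $\pitwo$. The only routine technical point is confirming that non-tangential approach and the asymptotic expansion both survive the slicing $t \mapsto t\delta$, which is immediate from $\Im(t\delta_j) = \delta_j\, \Im(t)$ and $|t\delta_j| \leq \max_i(\delta_i)\,|t|$; the main conceptual ingredient is the classical determinacy of compactly supported moment sequences, which is what makes rationality of $g$ the decisive hypothesis.
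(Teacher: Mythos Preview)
Your proof is correct and follows essentially the same strategy as the paper's: slice to one variable, invoke determinacy of the Hamburger moment problem for the rational function's (finitely supported) representing measure, then globalize by the identity theorem. The only cosmetic difference is the choice of slices---the paper uses the diagonal translates $z \mapsto (z,z+w)$ with $w \in \R$, whereas you use the rays $t \mapsto t\delta$ with $\delta \in \rtwoplus$---and you spell out the globalization step in slightly more detail than the paper does.
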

\bp
For each fixed $w$ in $\R$, the functions $g(z,z+w)$ and $h(z,z+w)$ are in the one
variable Pick class and have the same asymptotic expansions at $\infty$. By Theorem~\ref{thma3},
they must be Cauchy transforms of measures with the same moments.
Moreover, $g(z,z+w)$ is rational. Therefore by \cite[Thm. 1.2]{shota43}, the one-variable 
moment problem is in this case determinate, so the two measures must be equal.
Therefore $g(z,z+w) = h(z,z+w)$ for all $z \in \Pi, \ w \in \R$, and so the
two functions are identically equal.
\ep

\begin{cor}
\label{corf8}
Let $h \in \ln$ have an asymptotic expansion
\[
h(z) \ = \ 
\sum_{|n| \leq 2N-1} \frac{\rho_n}{z^n} \ + \ 
o(\| z \|^{-(2N-1)})
\]
as $z \stackrel{nt}{\to} \infty.$ Then there is a rational function $g$ in ${\mathcal L}^\infty$
that has the same asymptotic expansion to order $2N-1$.
\end{cor}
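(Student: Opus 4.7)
The plan is to reduce to Theorem~\ref{thmf3} by extracting a rational $g \in \mathcal{L}^\infty$ whose asymptotic expansion at infinity agrees with $h$'s through order $2N-1$. Starting from $h \in \ln$, Theorem~\ref{thm4.1} provides a type I Nevanlinna representation $h(z) = \langle(A-z_Y)^{-1}\alpha,\alpha\rangle$ together with an associated finite HVMS $(\{\alpha_n\}_{n \in I_N}, Y, A)$, with $\alpha = \alpha_{(1,0)} + \alpha_{(0,1)}$, whose first $2N-1$ scalar $(Y,\alpha)$-moments recover the residues $\rho_n$ of $h$.

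Next I pass to the finite-dimensional subspace $\mathcal{K} = \mathrm{span}\{\alpha_n : n \in I_N\}$, set $\tilde{Y} := P_{\mathcal{K}} Y|_{\mathcal{K}}$ (still a positive contraction on $\mathcal{K}$), and let $\tilde{A}$ be any self-adjoint extension on $\mathcal{K}$ of the symmetric compression $P_{\mathcal{K}} A|_{\mathcal{K} \cap \mathrm{Dom}(A)}$. Since $A\alpha_n \in \mathcal{K}$ for $n \in I_{N-1}$, a short computation gives
\[
\tilde{Y} \alpha_{n+e_1} + (1-\tilde{Y})\alpha_{n+e_2} \ = \ P_{\mathcal{K}}\bigl(Y\alpha_{n+e_1} + (1-Y)\alpha_{n+e_2}\bigr) \ = \ A \alpha_n,
\]
and $\tilde{Y}\alpha_{(0,l)} = 0 = (1-\tilde{Y})\alpha_{(l,0)}$ persists, so $(\{\alpha_n\}_{n \in I_N}, \tilde{Y}, \tilde{A})$ is a finite HVMS of size $N$ on the finite-dimensional space $\mathcal{K}$ with the same Hankel pair as the original.

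The heart of the argument is to extend this finite HVMS to an infinite HVMS on a finite-dimensional space. Inductively in the level $M \ge N$, I decompose $\mathcal{K}$ according to the spectral projections of $\tilde{Y}$ as $\ker\tilde{Y} \oplus \ker(1-\tilde{Y}) \oplus \mathcal{M}$. Processing the equations $\tilde{A}\alpha_n = \tilde{Y}\alpha_{n+e_1} + (1-\tilde{Y})\alpha_{n+e_2}$ for $|n| = M$ in the order $n = (M,0), (M-1,1), \dots, (0,M)$ and projecting onto each summand determines the $\ker\tilde{Y}$- and $\ker(1-\tilde{Y})$-components of each new $\alpha_m$ directly, while its $\mathcal{M}$-component is determined by a tridiagonal chain of linear relations initialized at the boundary data $\alpha_{(M+1,0)} \in \ker(1-\tilde{Y})$ and $\alpha_{(0,M+1)} \in \ker\tilde{Y}$. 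Granted the extension,
\[
g(z) \ := \ \langle(\tilde{A} - z_{\tilde{Y}})^{-1}\alpha, \alpha\rangle_{\mathcal{K}}
\]
is rational (inverse of a finite-dimensional operator pencil), lies in $\mathcal{L}^\infty$ by Theorem~\ref{thmfa} applied to the infinite HVMS just constructed, and matches the residues of $h$ through order $2N-1$ by Theorem~\ref{prop3.10}.

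The principal obstacle is the extension itself: the chain of equations for the $\mathcal{M}$-components is over-determined by a consistency condition at the boundary opposite where it is initialized. I expect this condition to be either automatic from the finite HVMS relations of size $N$ already present (so that the size-$N$ Hankel pair constraints force the $\mathcal{M}$-chain to close up), or else enforceable by enlarging $\mathcal{K}$ by finitely many additional dimensions and re-extending $\tilde{A}$ there, without disturbing the first $N$ levels of HVMS data that encode the residues we wish to match.
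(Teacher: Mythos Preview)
Your first four steps—obtaining a finite HVMS from Theorem~\ref{thm4.1} and then compressing to the span $\mathcal{K}$ of the $\alpha_n$—are essentially the paper's approach, just phrased via compression rather than an abstract Gram-matrix realization (as in the proof of Theorem~\ref{thmf3}); the two constructions give the same Hankel pair, and your verification that $(\{\alpha_n\},\tilde Y,\tilde A)$ is again a finite HVMS of size $N$ on $\mathcal{K}$ is correct.

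The divergence is your step 5. The paper does \emph{not} extend the finite HVMS to an infinite one. It simply sets $g(z)=\langle(B-z_X)^{-1}\beta,\beta\rangle$ on the finite-dimensional space and observes, via Remark~\ref{rem2.1} and Theorem~\ref{prop3.10}, that the scalar $(X,\beta)$-moments of $B$ through order $2N-1$ coincide with the scalar $(Y,\alpha)$-moments of $A$ (they depend only on the Hankel pair), so $g$ has the same asymptotic expansion as $h$ to order $2N-1$; rationality of $g$ is immediate from finite-dimensionality. Your step 5 is therefore unnecessary for the conclusion the paper actually argues, and it is also unfinished: you correctly identify the overdetermined boundary condition in the tridiagonal $\mathcal{M}$-chain as the ``principal obstacle,'' but you only speculate that it closes up automatically or can be repaired by enlarging $\mathcal{K}$—neither is proved. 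As written, this is a genuine gap in your argument, and the paper's route avoids it entirely by never needing the infinite extension.
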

\bp
Let $(\alphanin{I_N},Y,A)$ be a finite HVMS corresponding to $h$ as in Theorem~\ref{thm4.1}.
Choose vectors $\{ \beta_n \}_{n \in I_N} $ in a finite dimensional space $\H$ so that
(\ref{eqf6}) holds, and define $X$ and $B$ as in the proof of Theorem~\ref{thmf3}.
Then $g$ given by (\ref{eqf9}) has the same asymptotic expansion.
\ep

\section{An example}
\label{secex}

Let $\{ w_j \}_{j=1}^\infty$ be a summable sequence of non-negative numbers, and
let $\{ \lambda_j  \}_{j=1}^\infty$ be a sequence of real numbers. Let $t_j$ be numbers 
in the interval $[0,1]$. Define
\begin{eqnarray*}
A \ &=& \ \bigoplus \left(
\begin{array}{cc}
\la_j & 0 \\
0 & - \la_j
\end{array}
\right)\\
Y \ &=& \ \bigoplus  \left(
\begin{array}{cc}
t_j^2 & t_j \sqrt{1 - t_j^2} \\
t_j  \sqrt{1 - t_j^2} & 1 - t_j^2
\end{array}
\right)\\
\alpha_{(1,0)}
\ &=& \ \bigoplus \sqrt{w_j} \left(
\begin{array}{c}
t_j \\
\\
 \sqrt{1 - t_j^2}
\end{array}
\right)\\
\alpha_{(0,1)}
\ &=& \ \bigoplus \sqrt{w_j} \left(
\begin{array}{c}
 \sqrt{1 - t_j^2}  \\
- t_j
\end{array}
\right)
\end{eqnarray*}
If $\alpha = \alpha_{(1,0)} + \alpha_{(0,1)}$ and 
$h(z) = \langle (A-z_Y)^{-1} \alpha, \alpha \rangle$, then $h(z)$ is given by
\be
\label{eqex2}
h(z) \ = \ 
\sum_{j=1}^\infty w_j \frac{ 4 t_j  \sqrt{1 - t_j^2} \ \lj + z_1 + z_2}{ \lj^2 - 
\lj (2 t_j^2 - 1)  (z_1 - z_2)
 - z_1 z_2 } .
\ee
If $\sum w_j \lj^2 < \infty$, then one can extend the HVMS by
\begin{eqnarray*}
\alpha_{(2,0)} &\ = \  &
\bigoplus \sqrt{w_j}\,  \lj \ (2t_j^2 -1)
 \left(
\begin{array}{c}
t_j
\\ \\
\sqrt{1-t_j^2}
\end{array} \right) \\
\alpha_{(1,1)}  &\ = \  & 
\bigoplus 2\, \sqrt{w_j}\,  \lj \ 
 \left(
\begin{array}{c}
t_j - t_j^3 + t_j^2 \sqrt{1-t_j^2}  \\ \\
t_j -t_j^3 -t_j^2 \sqrt{1-t_j^2} 
\end{array}
\right)\\
\alpha_{(0,2)} &\ = \ & 
\bigoplus \sqrt{w_j}\,  \lj \ (1 - 2t_j^2 )
 \left(
\begin{array}{c}
\sqrt{1-t_j^2}
\\ 
-t_j
\end{array} \right)
.
\end{eqnarray*}
Calculating, one gets that
\begin{eqnarray*}
r_1(z) & \ = \ & 
\left( \sum w_j \right) \left[ \frac{1}{z_1} + \frac{1}{z_2} \right] \\
r_2(z) & \ = \ & 
\sum w_j \lj \  
\left[
\frac{2t_j^2 -1}{z_1^2} + \frac{4 t_j \sqrt{1-t_j^2}}{z_1 z_2}
+\frac{1 - 2t_j^2 }{z_2^2}
\right] \\
r_3(z) & \ = \ & 
\sum w_j \lj^2 \ 
\left[
\frac{(2t_j^2 -1)^2}{z_1^3}\  + \ \frac{4(t_j^2 - t_j^4) + 4t_j (2t_j^2 -1) \sqrt{1-t_j^2}}{z_1^2 z_2} \right.\\
&&\left. \qquad +\  \frac{4(t_j^2 - t_j^4) + 4t_j (1-2t_j^2 ) \sqrt{1-t_j^2}}{z_1 z_2^2}\
+\  \frac{(2t_j^2 -1)^2}{z_2^3}
\right].
\end{eqnarray*}
These are (up to a minus sign) the first 3 terms in the asymptotic expansion of (\ref{eqex2}) at infinity.
If one assumes that $\sum w_j \lj^4 < \infty$, then one gets two more terms, and so on.

In the special case that every $t_j = 1/\sqrt{2}$, the formulas simplify.
Then
\begin{eqnarray*}
h(z) & \ = \ & \sum w_j \frac{2 \lambda_j + z_1 + z_2}{\lambda_j^2 - z_1 z_2} \\
r_1(z) & \ = \ & 
\left( \sum w_j \right)  \frac{z_1 + z_2}{z_1 z_2}  \\
r_2(z) & \ = \ & 
\left( \sum w_j \lj \right) \  
\frac{2}{z_1 z_2} \\
r_3(z) & \ = \ & 
\left( \sum w_j \lj^2 \right) \ 
\frac{z_1 + z_2}{z_1^2 z_2^2} .
\end{eqnarray*}

\section{Models}
\label{secm}

A {\em model} for $h$ is a reproducing kernel space $\M$ on $\Pi^2$,
and a positive contraction $Y$ on $\M$ so that, if the reproducing kernel $K$ for $\M$ is
written as
\be
\label{eq2a1}
K(z,w) \= \langle v_z, v_w \rangle_\M
\ee
with $v_z$ analytic in $z$, then 
\be
\label{eq2a2}
h(z) - \overline{h(w)} \= (z_1 - \bar w_1) \langle Y v_z, v_w \rangle \, + \,
(z_2 - \bar w_2) \langle (I-Y) v_z, v_w \rangle.
\ee
Using our earlier notation $z_Y = z_1 Y + z_2 (I-Y)$,  (\ref{eq2a2}) becomes
\be
\label{eq2a3}
h(z) - \overline{h(w)} \=  \langle (z_Y - w_Y^*) v_z, v_w \rangle .
\ee
The existence of models for functions in the Pick class was proved in \cite{ag90}.
Indeed, it was shown there that for every $h$ in the Pick class, 
there 
are analytic functions $v^1(z)$ and $v^2(z)$ taking values in Hilbert spaces
$\M^1$ and $\M^2$ so that
\[
h(z) - \overline{h(w)} \= (z_1 - \bar w_1) \langle  v^1(z), v^1(w)  \rangle_{\M^1} \, + \,
(z_2 - \bar w_2) \langle   v^2(z), v^2(w) \rangle_{\M^2} .
\]
Let 
\[
K(z,w) \= \langle  v^1(z), v^1(w)  \rangle_{\M^1} \, + \,
 \langle   v^2(z), v^2(w) \rangle_{\M^2}.
\]
This is a kernel, so can be written as in (\ref{eq2a1}) for some other Hilbert space $\M$,  and there is a positive contraction $Y$ on $\M$
so that
\[
 \langle  v^1(z), v^1(w)  \rangle_{\M^1} \ = \
 \langle Y v_z, v_w \rangle .
\]
This yields (\ref{eq2a3}).

Write $\ii$ for the point $(i,i)$ in $\C^2$.
The equivalence of {\em (ii) - (iv)} in the following theorem was first proved in 
\cite{aty11}.

\bt
\label{thm2b1} Let $h: \Pi^2 \to \overline{\Pi}$ be in the Pick class, and not identically zero.
The following are equivalent.

(i) For some/every model with reproducing kernel as in (\ref{eq2a1}), there is a vector
$\alpha$ in $\M$ such that
\be
\label{eqb1}
h(z) \= \langle v_z, \alpha \rangle .
\ee

(ii) There exists a self-adjoint operator $A$ on a Hilbert space $\h$ and a vector $\alpha$ in
$\h$ such that
\be
\label{eqb2}
h(z) \= \langle (A - z_Y)^{-1} \alpha, \al \rangle .
\ee

(iii) There exists $c > 0$ such that
\be
\label{eqb25}
\lim_{s \to \infty} s h(s \ii) \= ic .
\ee

(iv) We have
\be
\label{eqb3}
\liminf_{s \to \infty} | s h(s \ii) | \ < \ \i .
\ee
\et

\bp $(i) \Rightarrow (ii)$:
Define $B$ by 
\[
B : v_z \mapsto z_Y v_z + \al .
\]
Equations (\ref{eq2a3}) and (\ref{eqb1}) imply that
\be
\label{eqb4}
\langle B v_z , v_w \rangle \=
\langle v_z, B v_w \rangle .
\ee
Extend $B$ to finite linear combinations of vectors $v_{z_j}$ by linearity, and
(\ref{eqb4}) says that $B$ is well-defined and symmetric.
Indeed, if some linear combination $\sum c_j v_{z_j} = 0$, then
for every $w$ we have
\[
\langle \sum c_j ( (z_j)_Y v_{z_j} + \al ), v_w \rangle \= 
\langle \sum  c_j v_{z_j} , w_Y v_w + \al \rangle \= 0 ,
\]
so $B( c_j v_{z_j}) = 0$.

If the defect indices of the closure of $B$ match, then $B$ can be extended to a self-adjoint
operator on $\M$. If not, $B$ can be extended to a self-adjoint operator on a superspace of $\M$.
In either event, we can assume that there is a self-adjoint $A$ on $\h \supseteq \M$
such that
\[
A : v_z \mapsto z_Y v_z + \al .
\]
Therefore $v_z = (A - z_Y)^{-1} \al$, and (\ref{eqb2}) follows from (\ref{eqb1}).

$(ii) \Rightarrow (iii)$: By the spectral theorem,
\[
s \, h(s \ii) \= \int \frac{s}{t-is} d\mu(t) 
\]
where $\mu$ is the finite measure that is the scalar spectral measure of $A$ for $\alpha$.
%\cite{p.360}[Ka66].
As the integrand is bounded by $1$ in modulus and tends pointwise to $i$, the dominated convergence theorem
implies  
\[
\lim_{s \to \infty}  s h( s\ii)  \ =\ i \| \al \|^2 .
\]

$(iii) \Rightarrow (iv)$: Obvious.

$(iv) \Rightarrow (i)$ By (\ref{eq2a3}), 
\be
2 \Im h(s\ii) \= 2 is \langle v_{s \ii} , v_{s \ii} \rangle .
\label{eqb5}
\ee
By (\ref{eqb3}) and (\ref{eqb5}), there is 
 a sequence $s_n$ such that
$-i s_n v_{s_n \ii} $ has a weak limit. Call this limit $\al$.
By (\ref{eq2a3}) we have
\be
\label{eqb6}
h(z) - \overline{h(s_n \ii)} \= \langle z_y v_z , v_{s_n \ii}\rangle  \, + \,
\langle v_z, -is_n v_{s_n \ii} \rangle .
\ee
Take the limit in (\ref{eqb6}) as $s_n \to \infty$ to get (\ref{eqb1}).
\ep

\bt
\label{thm2b2}
Let $h$ be in the Pick class of two variables, and assume $h$ satisfies (\ref{eqb3}).
 There exists a representation 
as in (\ref{eqb2}) with $\H$ finite dimensional
%\[
%h(z) \ = \
%c + \langle (A - z_Y)^{-1} \alpha , \alpha \rangle_\H
%\]
% with 
%$\H$ finite dimensional, $c$ real, $A$ self-adjoint, and $Y$ a positive contraction,
 if and only if $h$ is rational and real-valued on the complement in $\R^2$
of its polar set.
\et
\bp
If $h$ has a representation as in (\ref{eqb2}) with $\H$ $d$-dimensional,
it is clear that $h$ is rational of degree at most $d$ in each variable,
and that $h$ is real on $\R^2$ off its polar set.

 For the converse, let
$$
\alpha(\la) \= i \frac{1+\la}{1-\la}
$$
be a linear fractional map that maps the unit disk ${\mathbb D}$ to $\Pi$, and
$$
\beta(z) \= \frac{z-i}{z+i}
$$
be its inverse. Let 
\[
\phi(\lambda_1, \lambda_2) \ = \
\beta \circ h(\alpha(\lambda_1), \alpha(\lambda_2)) .
\]
This is a function in the unit ball of $H^\infty({\mathbb D}^2)$, the space of bounded analytic
functions on the bidisk. Moroever $\phi$ is rational if and only if $h$ is, in which case they have the same bidegree, and $\phi$ is inner if and only if $h$ is real-valued a.e. on $\R^2$.

Assume $h$ is rational and non-constant 
of bidegree $(d_1,d_2)$. By a result of G. Knese \cite{kn08ub},
there are Hilbert spaces $\M^1$ and $\M^2$ of dimension $d_1$ and $d_2$ respectively,
and analytic functions $u^1 : \D^2 \to \M^1$ and $u^2: \D^2 \to \M^2$
so that
\be
\label{eq2b4}
1 - \phi(\la) \overline{\phi(\zeta)} \ = \
(1 - \la_1 \overline{\zeta_1}) \langle u^1(\la) , u^1(\zeta) \rangle \ + \
(1 - \la_2 \overline{\zeta_2}) \langle u^2(\la) , u^2(\zeta) \rangle.
\ee
%Let $\M = \M^1 \oplus \M^2$, let $P$ be orthogonal projection from 
%$\M$ onto $\M^1$, and let $u_\lambda = u^1(\la) \oplus u^2(\la)$.
%Then (\ref{eq2b4}) can be written as
%\be
%\label{eq2b5}
%1 - \phi(\la) \overline{\phi(\zeta)} \ = \
%\langle (1 - (\la \bar \zeta)_P ) u_\lambda , u_\zeta \rangle ,
%\ee
%where $ (\la \bar \zeta)_P   := \la_1 \bar \zeta_1 P + 
%\la_2 \bar \zeta_2 (1-P) $.
%Define a linear operator $V$ by
%$$
%%\be \label{eqdd5}
%V \, : \,
%\left( \begin{array}{c}
%1 \\ \la_P u_\la
%\end{array} \right)
%\ \mapsto \
%\left( \begin{array}{c}
%\phi(\la)  \\  u_\la
%\end{array} \right) ,
%$$
%and extend it by linearity to finite linear combinations of
%vectors of the form
%$$
%\left( \begin{array}{c} 1 \\ (\la_i)_P u_{\la_i} \end{array} \right)
%$$
%where the points $\la_i$ range over $\D^2$.
%
%$V$ is defined on a subspace of $\C \oplus \M$, and by (\ref{eq2b5}) it is isometric
%on its domain.
%As $\M$ is finite dimensional, we can extend $V$ to a unitary from $\C \oplus \M$
%onto $\C \oplus \M$.
%Let $$
%X  : \ (V-1) \eta \mapsto \ -i(V+1) \eta .
%$$
%If $1$ is not an eigenvalue of $V$, then $X$ is self-adjoint and equals $ -i(V + 1)(V-1)^{-1}$ 
% on all of 
% $\C \oplus \M$. If $1$ is an eigenvalue of $V$, then {\em a priori} $X$ is only defined on
%${\rm Ran}(V-1)$. In this case, extend $X$ to be $0$ on ${\rm Ker}(V-1)$. This gives a self-adjoint
%matrix $X$, with the same eigenvectors as $V$.
Define functions $v^r : \Pi^2 \to \M^r$ for $r=1,2$ by
\[
v^r(z) \ = \
\frac{h(z) + i}{z_r +i} u^r_{\beta(z)}.
\]
Then an algebraic manipulation transforms (\ref{eq2b4}) into
\be
\label{eq2b5}
h(z) - \overline{h(w)} \ = \
(z_1 - \bar w_1) \langle v^1(z) , v^1(w) \rangle \ + \
(z_2 - \bar w_2)
\langle v^2(z) , v^2(w) \rangle.
\ee
Let 
\[
K(z,w) \= \langle  v^1(z), v^1(w)  \rangle_{\M^1} \, + \,
 \langle   v^2(z), v^2(w) \rangle_{\M^2}.
\]
This has rank less than or equal to $d = d_1 + d_2$, so it is the reproducing kernel for some
Hilbert  function space
$\M$ on $\Pi^2$ of dimension less than or equal to $d$.
By $ (iv) \Rightarrow (i)$ of Theorem~\ref{thm2b1}, we have a vector $\alpha$ such that
(\ref{eqb1}) holds. Now follow the proof of $(i) \Rightarrow (ii)$, and observe that since
$B$ is defined on a finite dimensional space, its defect indices must match, and so it can be extended to a self-adjoint operator $A$ on $\M$. 
\ep
%It was shown in \cite{amy11c} that in some generic sense, $Y$ could be chosen to be
%the projection from $\M^1 \oplus \M^2$ onto $\M^1$. But this cannot always be done, as 
%the following example shows.
%\begin{example}
%\label{ex2b1}
%{\rm
%Consider the function 
%\[
%h(z) \ = \ \frac{1}{1-(z_1 + z_2)/2} .
%\]
%This has a representation on a one-dimensional space, with $A=1, \ Y= 1/2$
%and $\alpha = 1$.
%However it has no representation (\ref{eqb2}) on a finite dimensional space with $Y$ a 
%projection.
%
%Indeed, suppose $P$ were a projection from $\C^{d_1 + d_2}$ onto $\C^{d_1}$.
%Then
%\[
%\langle (A - z_P)^{-1} \alpha, \alpha \rangle
%\]
%would expand into a rational function, whose denominator was the determinant of
%$A - z_P$. This determinant has a unique term of total degree $d$, namely
%$(-1)^d z_1^{d_1} z_2^{d_2}$, and all the other terms are of lower order.
%But it can be checked that there is no polynomial $q \in \C[z_1, z_2]$ so that
%\[
%(2-z_1 - z_2) q(z_1, z_2)  \ = \
% z_1^{d_1} z_2^{d_2} \ + \ {\rm lower\ order\ terms} .
%\]
%So the rational function could not reduce to $h$.}
%\end{example}

\bibliography{references}
\end{document}